\tikzset{snake it/.style={decorate, decoration=snake}}
\newtheorem{thm}{Theorem}[section]
\newtheorem{cor}[thm]{Corollary}
\newtheorem{lem}[thm]{Lemma}
\newtheorem{prop}[thm]{Proposition}
\newtheorem{theorem}{Theorem}
\newtheorem{corollary}[theorem]{Corollary}
\newtheorem*{conjecture}{Rigidity Conjecture}
\theoremstyle{definition}
\newtheorem{defn}[thm]{Definition}
\newtheorem*{conj*}{Conjecture}
\newtheorem{rem}[thm]{Remark}
\newtheorem*{ack}{Acknowledgments}
\numberwithin{equation}{section}
\DeclareMathOperator{\Span}{span}
\DeclareMathOperator{\im}{Im}
\newcommand{\sph}{\mathrm{\mathbb{S}}}
\newcommand{\bq}{/ \hspace{-.15cm} /}
\newcommand{\Z}{\mathbb{Z}}
\newcommand{\C}{\mathbb{C}}
\def\N{\mathbb{N}}
\newcommand{\x}{\times}
\newcommand{\ox}{\otimes}
\newcommand{\vphi}{\varphi}
\newcommand{\id}{\mathrm{id}}
\def\bpm{\begin{pmatrix}}
\def\epm{\end{pmatrix}}
\def\bvm{\begin{vmatrix}}
\def\evm{\end{vmatrix}}
\def\bsm{\left(\begin{smallmatrix}}
\def\esm{\end{smallmatrix}\right)}
\def\beq{\begin{equation}}
\def\eeq{\end{equation}}
\DeclareMathOperator{\rank}{rank}
\DeclareMathOperator{\rk}{rk}
\newcommand{\CP}{\mathbb{CP}}
\newcommand{\Q}{\mathbb{Q}}
\newcommand{\SU}{\mathsf{SU}}
\def\ol{\overline}
\def\In{\subseteq}
\def\HH{\mathbb{H}}
\def\QQ{\mathbb{Q}}
\begin{document}

%-----------------------------------------------------------------------------
% BEGIN FRONT MATTER ----------------------------------------------------------
%-----------------------------------------------------------------------------

% TITLE

\title[Torus actions on rationally elliptic manifolds]{Torus actions on rationally elliptic manifolds}

% AUTHOR 1

\author[F.~Galaz-Garc\'ia]{F.~Galaz-Garc\'ia$^{*\dagger}$}
\address[Galaz-Garc\'ia]{Department of Mathematical Sciences, Durham University, United Kingdom.}
\email{fernando.galaz-garcia@durham.ac.uk}
\thanks{$^*$Received support from  SFB 878: \emph{Groups, Geometry \& Actions} at WWU M\"unster.}
\thanks{$^\dagger$Received support from the DFG grants GA 2050/2-1, SPP2026 ``Geometry at Infinity'' and 281869850, RTG 2229 ``Asymptotic Invariants and Limits of Groups and Spaces'').}

% AUTHOR 2

\author[M.~Kerin]{M.~Kerin$^{*\ddag}$}
\address[Kerin]{School of Mathematics, Statistics and Applied Mathematics, NUI Galway, Ireland.}
\email{martin.kerin@nuigalway.ie}
\thanks{$^\ddag$Received support from the DFG grant KE 2248/1-1, SPP2026 ``Geometry at Infinity''.}

% AUTHOR 3

\author[M.~Radeschi]{M.~Radeschi$^{*}$}
\address[Radeschi]{Department of Mathematics, University of Notre Dame, USA.}
\email{mradesch@nd.edu}
\thanks{}

% DATE
\date{\today}

% MATH SUBJECT CLASSIFICATION AND KEYWORDS

\subjclass[2010]{55P62,57R91,57S15}
\keywords{equivariant, rationally elliptic, toral rank, torus action}

% ABSTRACT

\begin{abstract}  An upper bound is obtained on the rank of a torus which can act smoothly and effectively on a smooth, closed (simply connected) rationally elliptic manifold.  In the maximal-rank case, the manifolds admitting such actions are classified up to equivariant rational homotopy equivalence.
 \end{abstract}

\maketitle

%-----------------------------------------------------------------------------
% END FRONT MATTER ----------------------------------------------------------
%-----------------------------------------------------------------------------

%-----------------------------------------------------------------------------
%	MAIN MATTER	-----------------------------------------------------------------------------
%-----------------------------------------------------------------------------

%---------------------------------------------
% SECTION: INTRODUCTION
%---------------------------------------------

\section{Introduction}

Recall that a simply connected topological space $X$ is \emph{rationally elliptic} if $\dim_\Q H^*(X; \Q) < \infty$ and $\dim_\Q (\pi_*(X) \ox \Q) < \infty$.  An action of a compact Lie group $G$ on $X$ is said to be \emph{effective} if $g=e \in G$ whenever $g \cdot x=x$ for all $x\in X$.  The action is \emph{almost free} if, for every $x \in X$, the isotropy group $G_x=\{g \in G\ | \ g \cdot x=x \}$ is finite.  

% THM: RANK BOUNDS

\begin{theorem}
\label{T:RANK_BOUND}
Let $M^n$ be a smooth, closed, (simply connected) rationally elliptic $n$-dimensional manifold equipped with a smooth, effective action of the $k$-torus $T^k$.  Then $k\leq \left\lfloor \frac{2n}{3} \right\rfloor$.  Moreover, if the action is almost free, then $k \leq \left\lfloor \frac{n}{3} \right\rfloor$.
\end{theorem}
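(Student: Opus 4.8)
\section{Proof proposal for the almost free rank bound}

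The plan is to transport the problem to the Borel construction and exploit the stability of rational ellipticity under fibrations. Fix a subtorus $T^r \leq T^k$ acting almost freely on $M^n$ and form the Borel construction $M_{T^r} = ET^r \times_{T^r} M$, which fits into the fibration $M \to M_{T^r} \to BT^r$. Since $M$ and $BT^r$ are simply connected, so is $M_{T^r}$, and the long exact rational homotopy sequence shows that $M_{T^r}$ has finite-dimensional rational homotopy (both fibre and base being rationally elliptic). Almost freeness means the isotropy groups are finite, hence rationally invisible: the projection $M_{T^r} \to M/T^r$ is a rational cohomology isomorphism, so $H^*(M_{T^r};\Q)$ is finite-dimensional and, by Poincar\'e duality on the orbit space, concentrated in degrees $\leq n-r = \dim(M/T^r)$. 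Therefore $M_{T^r}$ is simply connected and rationally elliptic of formal dimension $n-r$.

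Next I would extract a rank bound from the homotopy Euler characteristic $\chi_\pi(X) := \dim_\Q(\pi_{\mathrm{even}}(X)\ox\Q) - \dim_\Q(\pi_{\mathrm{odd}}(X)\ox\Q)$, which is additive in fibrations: taking alternating sums across the long exact rational homotopy sequence of $M \to M_{T^r} \to BT^r$ gives $\chi_\pi(M_{T^r}) = \chi_\pi(M) + \chi_\pi(BT^r)$. Since $BT^r \simeq_{\Q} (\CP^\infty)^r$ has $\chi_\pi(BT^r) = r$, and since every rationally elliptic space satisfies $\chi_\pi \leq 0$, applying the latter to $M_{T^r}$ yields $\chi_\pi(M) + r \leq 0$. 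Writing $p := \dim_\Q(\pi_{\mathrm{odd}}(M)\ox\Q)$ and $q := \dim_\Q(\pi_{\mathrm{even}}(M)\ox\Q)$, this reads $r \leq p - q$, so the rank is controlled by the rational homotopy of $M$ alone, and it remains to prove the purely algebraic bound $p - q \leq \lfloor \frac{n}{3}\rfloor$.

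For the latter I would pass to the associated pure Sullivan model of $M$, which is again elliptic with the same formal dimension $n$ and the same ranks of rational homotopy; write the even generators in degrees $2a_1,\dots,2a_q$ and the odd generators in degrees $2b_1 - 1,\dots,2b_p - 1$, so that $n = \sum_{j=1}^p(2b_j - 1) - \sum_{i=1}^q(2a_i - 1)$. Ellipticity forces $p \geq q$ and guarantees a subset $S$ of $q$ odd generators whose differentials form a regular sequence in $\Q[x_1,\dots,x_q]$; the resulting quotient is a finite-dimensional complete intersection of necessarily nonnegative formal dimension $f = \sum_{j\in S}2b_j - \sum_{i=1}^q 2a_i \geq 0$. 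Rearranging the formal-dimension formula then gives the clean identity
\[ n = f + \sum_{j\notin S}(2b_j - 1), \]
and since each of the $p-q$ surviving odd generators has degree $2b_j - 1 \geq 3$ by simple connectivity, this yields $n \geq 3(p-q)$, i.e. $p - q \leq \lfloor \frac{n}{3}\rfloor$. Combined with $r \leq p-q$ this completes the argument, and the equality cases (e.g. products of $3$-spheres with the Hopf action) show the bound is sharp.

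The main obstacle is the algebraic step. The naive estimate ``each odd generator contributes at least $3$'' does not suffice on its own, because the even-degree generators enter the formal-dimension formula with a negative sign and can a priori be arbitrarily large, destroying any lower bound on $n$. The resolution is precisely the complete-intersection structure: every even generator is absorbed into the nonnegative factor $f$, leaving only the $p-q$ unpaired odd generators to bound $n$ from below. A secondary point requiring care is the passage to the Borel construction, where one must use that \emph{almost} freeness (not freeness) already suffices to make $M_{T^r}$ rationally elliptic of formal dimension $n-r$ --- this is where the rational invisibility of the finite isotropy groups is essential.
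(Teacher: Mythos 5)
Your argument addresses only the second assertion of the theorem (the bound $r \leq \lfloor \frac{n}{3}\rfloor$ for an almost-freely acting subtorus); the first assertion, $k \leq \lfloor \frac{2n}{3}\rfloor$ for an arbitrary effective action, is not touched, and it does not follow formally from the almost-free bound. The paper gets it by combining two further observations: if $s$ is the maximal dimension of an isotropy subgroup, then (i) one can choose a subtorus $T^{k-s} \subseteq T^k$ meeting every isotropy group in a finite set, so that $T^{k-s}$ acts almost freely and $3(k-s)\leq n$; and (ii) at a point $p$ with $\dim T_p = s$, the identity component $T^s \subseteq T_p$ acts linearly and effectively on the $(n-k+s)$-dimensional normal space to the orbit, forcing $2s \leq n-k+s$, i.e.\ $s \leq n-k$. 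Together these give $n \geq 3(k-s) \geq 3k - 3(n-k)$, hence $3k \leq 2n$. Without some version of (ii) the effective bound is out of reach, so this is a genuinely missing half of the proof.

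For the half you do prove, the scheme (Borel construction, $r \leq -\chi_\pi(M)$, then an algebraic bound $-\chi_\pi(M)\leq \lfloor\frac{n}{3}\rfloor$) is viable, but it hinges on an unproven step that is false as literally stated: that ellipticity \emph{guarantees a subset $S$ of $q$ odd generators whose differentials form a regular sequence}. Take $q=3$ even generators $x,y,z$ of degree $2$ and odd generators with $dy_1 = xy$, $dy_2 = yz$, $dy_3 = zx$, $dy_4 = x^3+y^3+z^3$: the quotient of $\Q[x,y,z]$ by all four is finite-dimensional, yet no three of these four polynomials form a system of parameters. The claim can be repaired by allowing degree-preserving changes of basis in the odd part (here generic combinations of $xy,yz,zx$ together with the cubic do work), but that existence statement is itself a nontrivial lemma --- essentially the complete-intersection extraction underlying the Friedlander--Halperin inequalities --- and needs a proof or a precise citation to \cite{FHT}. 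It is worth noting that the paper sidesteps all of this: it applies the elementary inequality $n_X \geq \sum_j 2j\, d_{2j}(X)$ directly to the Borel construction $M_{T^r}$, which has cohomological dimension $n-r$ and satisfies $d_2(M_{T^r}) \geq r$ by the homotopy sequence of $T^r \to M \to M_{T^r}$; this yields $n - r \geq 2r$ in two lines, with no need for the pure-model analysis or for the inequality $-\chi_\pi(M)\leq \lfloor\frac{n}{3}\rfloor$.
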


To the best of the authors' knowledge, these simple inequalities have not appeared in the literature, even though torus actions on rationally elliptic spaces have received much attention (see, for example, \cite{Al, Ha} and related papers).  In the equality cases, it is possible to determine which (equivariant) rational homotopy types can arise.  For a definition of equivariant rational homotopy equivalence, see Definition \ref{D:ERH}.

% THM B: RIGIDITY

\begin{theorem}
\label{T:RIGIDITY}
Let $M^n$, $n \geq 3$, be an $n$-dimensional, smooth, closed, (simply connected) rationally elliptic manifold equipped with a smooth, effective action of the $k$-torus $T^k$, $k\geq 1$.
\begin{enumerate}
	\item \label{L:AF} 
	If $T^k$ acts almost freely and $k = \left\lfloor \frac{n}{3} \right\rfloor$, then $M^n$ is rationally homotopy equivalent to a product $X \x \prod^{k-1}_{i=1} \sph^3$, where $X \in \{ \sph^3,\sph^2 \x \sph^3, \sph^5\}$.
	\item If  $k=\left\lfloor\frac{2n}{3}\right\rfloor$, then $M^n$ is rationally homotopy equivalent to a product $N^m \x \prod_{i=1}^{d} \sph^3$, where $m \in \{3,4,5,7,10\}$, $n = 3d + m$ and
	$$
		N^m = 
			\begin{cases}
			\sph^3, 													& \text{if } m = 3;\\
			\sph^4,\ \CP^2,\ \sph^2\x\sph^2,\text{ or } \CP^2\#\CP^2,				& \text{if } m = 4;\\
			\sph^2\x\sph^3\text{ or } \sph^5,								& \text{if } m = 5;\\
			\sph^7,\ \sph^2\times\sph^5 \text{ or }T^1(\sph^2\times\sph^2), 			& \text{if } m = 7;\\
			\sph^5  \x \sph^5,	 										& \text{if } m = 10.\\
			\end{cases}  
	$$
	Here $T^1(\sph^2\times\sph^2)$ denotes the unit tangent bundle of $\sph^2\times\sph^2$. Each manifold $N^m \x \prod_{i=1}^{d} \sph^3$ is equipped with a canonical linear $T^k$ action such that the rational homotopy equivalence is $T^k$-equivariant (in the sense of Definition~\ref{D:ERH}).
	\end{enumerate}
\end{theorem}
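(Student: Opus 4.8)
The plan is to treat the two equality cases separately, in each extracting the rational homotopy type directly from the minimal model $(\Lambda V,d)$ of $M^n$, where $V=V^{\even}\oplus V^{\odd}$ with $q:=\dim V^{\even}$ and $p:=\dim V^{\odd}$. Throughout I will use the two ingredients behind Theorem~\ref{T:RANK_BOUND}: first, that for a simply connected rationally elliptic space the formal dimension satisfies $n=\sum_j|y_j|-\sum_i(|x_i|-1)$ and, after passing to the associated pure model and reordering so that the differentials of $y_1,\dots,y_q$ form a regular sequence, that $n=n_0+\sum_{j=q+1}^{p}|y_j|$, where $n_0\ge 0$ is the formal dimension of the complete-intersection factor and each remaining odd generator has $|y_j|\ge 3$; second, the homotopy Euler characteristic $\chi_\pi=q-p\le 0$, which is additive in fibrations.

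For part~(1) I would first realize the almost free action rationally. The Borel fibration $M^n\to M_{T^k}\to BT^k$ has simply connected, rationally elliptic base, and since the action is almost free $H^*(M_{T^k};\Q)\cong H^*(M^n/T^k;\Q)$ is finite dimensional, so $M_{T^k}$ is rationally elliptic with $\chi_\pi(M_{T^k})\le 0$. Additivity of $\chi_\pi$ (with $\chi_\pi(BT^k)=k$) gives $\chi_\pi(M^n)+k=\chi_\pi(M_{T^k})\le 0$, i.e. $k\le p-q$, while the pure-model identity gives $p-q\le\lfloor n/3\rfloor$. Equality $k=\lfloor n/3\rfloor$ therefore forces $p-q=\lfloor n/3\rfloor$ together with $n_0\in\{0,2\}$ and all but at most one of the generators counted by $\sum_{j>q}|y_j|$ of degree exactly $3$. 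A short case analysis — using that the only positively elliptic space of formal dimension $2$ is rationally $\sph^2$, and that $n_0\ne 1$ — pins the minimal model down to that of $\sph^3\x(\sph^3)^{k-1}$, $\sph^2\x\sph^3\x(\sph^3)^{k-1}$, or $\sph^5\x(\sph^3)^{k-1}$, according to $n\equiv 0,2\pmod 3$; any cross-terms in $d$ are removed by a change of basis, so the equivalence is a genuine rational product. The residual case $n\equiv 1\pmod 3$ is excluded, since the leftover degree $+1$ cannot be realized by an odd generator.

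For part~(2) the almost free fibration is unavailable and the argument is substantially harder. The strategy I would follow is a descent: in the extremal case $k=\lfloor 2n/3\rfloor$ one analyzes the isotropy weights of $T^k$ at fixed points of subtori and at singular orbits, and shows that each extremal rank-$2$ block of weights is modeled on the standard cohomogeneity-one $T^2$-action on $\sph^3$. Via the Slice Theorem together with an equivariant splitting argument, such a block factors off to produce a $T^k$-equivariant rational equivalence $M^n\req\sph^3\x M'$, reducing $(n,k)$ to $(n-3,k-2)$ while preserving extremality. Iterating until no further $\sph^3$-block survives leaves an irreducible core $N^m$ carrying an effective $T^{\lfloor 2m/3\rfloor}$-action and admitting no rational $\sph^3$-factor, so that $M^n\req N^m\x(\sph^3)^{(n-m)/3}$ equivariantly.

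It remains to classify the irreducible cores, and this is the main obstacle. The key quantitative input is that $\sph^3$ is the unique rationally elliptic manifold attaining the ratio $k/\dim=2/3$; every other extremal piece is strictly sub-extremal, so the slack permitted by the floor in $\lfloor 2m/3\rfloor$ bounds $m$. Making this precise — combining the rational-homotopy inequalities above, the representation-theoretic bound $k\le\lfloor m/2\rfloor$ forced at any fixed point (which in particular shows that the odd-dimensional cores $\sph^5$, $\sph^7$, $\sph^5\x\sph^5$ act without fixed points), and the known classifications of simply connected rationally elliptic manifolds and of torus manifolds of maximal symmetry rank in dimensions $\le 7$ — yields $m\in\{3,4,5,7,10\}$ and the stated list, with $T^1(\sph^2\x\sph^2)$ and $\sph^5\x\sph^5$ the delicate endpoints. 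Finally, to upgrade each equivalence to one that is $T^k$-equivariant in the sense of Definition~\ref{D:ERH}, I would match equivariant (Borel) minimal models as $H^*(BT^k;\Q)$-algebras rather than the underlying models alone; tracking this $\Q[t_1,\dots,t_k]$-module structure through the splitting, rather than the bare rational homotopy type, is the most technical point of the argument.
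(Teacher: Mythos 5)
There are two genuine gaps, one in each part.

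In part (1), your claim that ``any cross-terms in $d$ are removed by a change of basis'' is false, and this is exactly where the real work lies. Knowing the degrees of the generators of the minimal model does not determine the rational homotopy type: for instance, $\wedge(x_1,x_2,y)$ with $\deg x_i=3$, $\deg y=5$ and $dy=x_1x_2$ is the minimal model of a rationally elliptic space of formal dimension $11$ with $d_3=2$, $d_5=1$, which is \emph{not} rationally $\sph^5\x\sph^3\x\sph^3$, and no change of basis kills a non-zero element of $\wedge^2(x_1,\dots,x_{k-1})$. Ruling out such models requires using the almost-free $T^k$-action again, not just the numerical constraints: the paper's Lemma \ref{L:ODD_DIM} shows that $\chi_\pi(M_T)=0$ forces $M_T$ to carry a pure Sullivan algebra, and an induction on degree then shows $\im(D|_{V_M})$ lies in the ideal generated by $H^2(BT^k)$, i.e.\ $d_M=0$. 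Your sketch never invokes the action at this stage, so the case $d_5(M)=1$ (and $X=\sph^5$) is not established.

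In part (2), the proposed descent --- splitting off $\sph^3$-factors equivariantly via isotropy-weight blocks until an ``irreducible core'' remains --- is not justified and does not engage with the actual difficulty. An extremal manifold need not split rationally as $\sph^3\x M'$ at all (e.g.\ $T^1(\sph^2\x\sph^2)$ and the quotients $(\prod\sph^3)/T^2$ generally mix all factors), so the iteration has no valid first step. The paper instead proves the action is slice-maximal (Lemma \ref{L:SliceMax}), invokes the classification of \cite{GKRW} to get $M^n\simeq_\Q(\prod_i\sph^{n_i})/T^l$ equivariantly with the $n_i$ determined by the $d_j(M)$, and then must prove \emph{finiteness of rational homotopy types within a fixed dimension}, which your ``slack in the floor bounds $m$'' argument does not address: a priori the differentials $D(x_i)=a_ib_is_1^2+(a_il_i+b_ik_i)s_1s_2+k_il_is_2^2$ could realize infinitely many isomorphism classes, and indeed Totaro's examples show they do for $T^3$-quotients in dimension $6$. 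The paper's Theorem \ref{T:biqs} excludes this for $T^2$-quotients by a genuinely arithmetic argument (Lemma \ref{L:same}, exploiting the gcd conditions \eqref{Eq:freeness} coming from freeness to force $\varepsilon=\pm1$), and no analogue of this mechanism appears in your outline; nor does your descent explain the case $n\equiv1\bmod3$, where only a rank-$(\lfloor n/3\rfloor-1)$ almost-free subtorus exists.
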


It is easy to see that each of the model spaces in Theorem \ref{T:RIGIDITY} admits a maximal-rank torus action of the appropriate type. In the effective case, the rigidity part is obtained in two steps. First, it is shown that any manifold satisfying the hypotheses of %in 
part (2) of Theorem~\ref{T:RIGIDITY} must be (equivariantly) rationally homotopy equivalent to a manifold of one of the following forms:
\begin{enumerate}
\item $X \x \prod \sph^3$,  with $X \in \{\sph^3, \sph^4, \sph^5, \sph^7, \sph^5 \x \sph^5\}$; \vspace*{1mm}
\item $(Y \x \prod \sph^3)/\sph^1$,  with $Y \in \{\sph^3, \sph^5\}$; or \vspace*{1mm}
\item $(\prod \sph^3)/T^2$.
\end{enumerate}
The second step is to show that any manifold of this form belongs to one of the finitely many options listed in Theorem \ref{T:RIGIDITY}. The biggest difficulty is to classify the rational homotopy types of manifolds of the form $(\prod \sph^3)/T^2$ and is dealt with in Theorem \ref{T:biqs}.

The conclusion of Theorem \ref{T:RIGIDITY} regarding finitely many rational homotopy types in each dimension is in contrast to the case of effective actions of rank $k = \left\lfloor \frac{2n}{3} \right\rfloor  - 1$, even in low dimensions. For example, B.\ Totaro \cite{To} has demonstrated that there are infinitely many rational homotopy types of $6$-dimensional manifolds of the form $(\sph^3 \x \sph^3 \x\sph^3) / T^3$, each of which admits an effective $T^3$ action.  Similarly, in each dimension $n = 3m + 1$, $m \not\equiv 1 \mod 4$, there are infinitely many rational homotopy types of manifolds which admit an almost-free torus action of rank $\left\lfloor\frac{n}{3}\right\rfloor - 1$ (see Proposition \ref{P:AMAF}).

% RIGIDITY CONJECTURE

It is natural to wonder whether the classifications in Theorem \ref{T:RIGIDITY} can be improved to (equivariant) homeomorphism or diffeomorphism.  

\begin{conjecture}
Let $M^n$, $n \geq 3$, be an $n$-dimensional, smooth, closed,  (simply connected) rationally elliptic manifold equipped with a smooth, effective action of the torus $T^k$ of rank $k = \left\lfloor\frac{2n}{3}\right\rfloor$.  Then $M^n$ is equivariantly diffeomorphic to an effective, linear action of $T^k$ on a manifold of one of the following forms:
\begin{enumerate}
\item $X \x \prod \sph^3$,  with $X \in \{\sph^3, \sph^4, \sph^5, \sph^5 \x \sph^5, \sph^7\}$;
\item $(Y \x \prod \sph^3)/\sph^1$,  with $Y \in \{\sph^3, \sph^5\}$; or
\item $(\prod \sph^3)/T^2$.
\end{enumerate}
\end{conjecture}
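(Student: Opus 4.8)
The plan is to treat the two parts separately, in each case converting the torus action into information about the minimal Sullivan model of $M^n$ and then exploiting the fact that the rank is extremal. For part \ref{L:AF}, the almost-free equality case, I would first pass to the rational quotient: an almost-free $T^k$-action makes the Borel fibration $M^n\to ET^k\times_{T^k}M^n\to BT^k$ rationally recover $B:=M^n/T^k$ as the total space, so that $B$ is a rationally elliptic space of formal dimension $n-k$. The long exact homotopy sequence of this fibration shows that $B$ has the same odd-degree rational homotopy as $M^n$, together with exactly $k$ extra even (degree-two) generators, i.e. $\dim\pi_{\mathrm{odd}}(B)\ox\Q=\dim\pi_{\mathrm{odd}}(M)\ox\Q$ while $\dim\pi_{\mathrm{even}}(B)\ox\Q=\dim\pi_{\mathrm{even}}(M)\ox\Q+k$. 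Ellipticity of $B$ forces $\dim\pi_{\mathrm{odd}}\geq\dim\pi_{\mathrm{even}}$, which, combined with the elliptic inequality of Theorem~\ref{T:RANK_BOUND}, gives $k\leq\dim\pi_{\mathrm{odd}}(M)\ox\Q-\dim\pi_{\mathrm{even}}(M)\ox\Q\leq\lfloor n/3\rfloor$. The point is that $k=\lfloor n/3\rfloor$ saturates every inequality in this chain: the excess odd generators must all lie in degree three and the even generators must be paired minimally, which should pin the model of $M^n$ down to that of $X\x\prod^{k-1}\sph^3$, with $X$ read off from $n\bmod 3$ by reassembling the rational fibration (each $\sph^2$ in $B$ together with its circle fibre producing an $\sph^3$). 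I expect the degree bookkeeping, and in particular the exceptional behaviour when $n\equiv 1\bmod 3$, to be the only delicate point here, the rigidity being essentially forced by equality.

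For part (2), the effective maximal-rank case, I would follow the two-step reduction indicated after the statement. The first, structural, step uses the isotropy and splitting results developed for these actions: choosing circle subgroups of $T^k$ and analysing their fixed-point components and slice representations, one peels off $\sph^3$-factors, each carrying a canonical rank-two linear subtorus, until only a low-dimensional core remains. Maximality of $k=\lfloor 2n/3\rfloor$ should force this core to be one of the three normal forms $X\x\prod\sph^3$, $(Y\x\prod\sph^3)/\sph^1$, or $(\prod\sph^3)/T^2$, with $X,Y$ small spheres. Throughout this step one must carry along the canonical linear torus actions so that the rational homotopy equivalences produced are equivariant in the sense of Definition~\ref{D:ERH}, rather than merely rational.

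The second, enumerative, step is where the real difficulty lies: one must show that each normal form realises only finitely many rational homotopy types and match them against the list of manifolds $N^m$ with $m\in\{3,4,5,7,10\}$. The forms $X\x\prod\sph^3$ and $(Y\x\prod\sph^3)/\sph^1$ are comparatively routine, since their minimal models can be written down directly and a circle quotient of a product of odd spheres is governed by a single Euler class. As the authors emphasise, the hard part will be the last form $(\prod\sph^3)/T^2$: classifying the rational homotopy types of these biquotients requires genuinely new input and is isolated as Theorem~\ref{T:biqs}, which I would invoke as a black box — it is precisely here that the exceptional entries $\CP^2\#\CP^2$ (as a $T^2$-quotient of $\sph^3\x\sph^3$) and $T^1(\sph^2\x\sph^2)$ (as a $T^2$-quotient of $(\sph^3)^3$) enter the list. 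The remaining obstacle is then bookkeeping: upgrading each bare rational homotopy equivalence to an equivariant one, and verifying that the canonical linear model in each dimension indeed carries a torus of the maximal rank $\lfloor 2n/3\rfloor$.
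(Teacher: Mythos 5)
The statement you are asked to prove is the paper's \emph{Rigidity Conjecture}, which asks for equivariant \emph{diffeomorphism} to a linear model. The paper does not prove this statement; it is posed explicitly as an open problem, with only partial results in dimensions $3$--$9$ (Section \ref{S:DIFFEO_CLASS}, in particular Theorem \ref{T:7to9}, which moreover needs extra hypotheses and concludes only equivariant homeomorphism). Your proposal instead outlines a proof of Theorem \ref{T:RIGIDITY}, the classification up to equivariant \emph{rational homotopy equivalence} --- and as an outline of that theorem it does track the paper's actual strategy (Borel fibration and elliptic inequalities for the almost-free case, reduction to slice-maximal actions and the three normal forms via \cite{GKRW}, and Theorem \ref{T:biqs} for the biquotients $(\prod\sph^3)/T^2$). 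But it does not, and cannot, prove the conjecture as stated.

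The gap is structural, not a matter of bookkeeping. Every tool in your outline (minimal Sullivan models, relative models of the Borel fibration, the homotopy Euler characteristic) lives in rational homotopy theory and is blind to integral and smooth invariants. The paper itself exhibits the obstruction: $\SU(3)$ is rationally homotopy equivalent to $\sph^3\x\sph^5$ yet is not even homotopy equivalent to it ($\pi_4$ differs), and $\SU(3)$ carries free $T^2$ actions. So a rational-homotopy argument alone cannot rule out such ``exotic'' representatives of the rational types in the list, which is exactly what the diffeomorphism statement requires. To bridge this gap the paper's partial results invoke entirely different machinery --- Wiemeler's homeomorphism classification of simply connected torus manifolds with $H^{\mathrm{odd}}=0$, the low-dimensional classifications of Barden, Wall--Jupp--Zhubr, Fintushel, Oh, and Grove--Wilking --- together with additional hypotheses (torsion-free $H_2$, a \emph{free} rather than almost-free subtorus of rank $2k-n$) that are needed precisely to control torsion in odd degrees before Wiemeler's theorem applies. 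Your proposal contains no step that addresses any of this, so the final ``upgrading each bare rational homotopy equivalence to an equivariant one'' elides the entire (open) content of the conjecture.
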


In low dimensions, it is possible to obtain some partial results in this direction.  These can be found in Section \ref{S:DIFFEO_CLASS}.

% MOTIVATION

\bigskip
The original motivation for the present work comes from the study of closed Riemannian manifolds with positive or non-negative sectional curvature. One of the central conjectures in the subject is the following:

% BOTT CONJECTURE

\begin{conj*}[Bott] A closed, simply connected manifold which admits a Riemannian metric of non-negative sectional curvature is rationally elliptic.
\end{conj*}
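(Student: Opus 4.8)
The statement to be addressed is the Bott Conjecture, one of the central open problems linking curvature and topology; accordingly I do not expect to produce a genuine proof, and what follows is an honest description of the most promising lines of attack together with the obstruction that has so far kept the problem open. The essential difficulty is a mismatch of categories: nonnegative sectional curvature is a pointwise, metric hypothesis, whereas rational ellipticity is a global, homotopy-theoretic conclusion, and at present there is no mechanism converting one directly into the other. The only entirely general topological restrictions known to follow from a lower curvature bound are soft, most notably Gromov's uniform bound on the total Betti number $\sum_i \dim_\Q H^i(M;\Q)$ in terms of $n$ alone. Rational ellipticity, by contrast, imposes far more rigid constraints---finiteness of $\dim_\Q(\pi_*(M)\ox\Q)$, vanishing of the rational homotopy above degree $2n-1$, Poincar\'e duality with nonnegative Euler characteristic, and so on---and closing the gap between these two levels of structure is precisely the content of the conjecture.

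The approach I would pursue is the symmetry method initiated by Grove: instead of confronting arbitrary metrics, one assumes the presence of a large isometry group and exploits the resulting structure. If a compact group $G$ acts isometrically on $M^n$, then the orbit space $M^n/G$ is an Alexandrov space of nonnegative curvature and of strictly smaller dimension, and the slice representation at a point controls the local geometry of the action along an orbit. The plan would be to induct on $\dim M - \dim G$, recovering rational-homotopy information about $M$ from that of the quotient together with the isotropy data supplied by the slice representations, with the aim of verifying ellipticity one reduction at a time. For torus actions this is where the present paper contributes directly: Theorem \ref{T:RANK_BOUND} shows that an effective $T^k$ action on a rationally elliptic $M^n$ forces $k \leq \lfloor 2n/3 \rfloor$, so any nonnegatively curved manifold violating this rank bound would disprove the conjecture, while in the maximal-symmetry regime $k = \lfloor 2n/3 \rfloor$ one could instead attempt to confirm ellipticity outright by matching the topology compelled by the curvature against the finite list of models in Theorem \ref{T:RIGIDITY}.

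The main obstacle---and the reason the conjecture remains open---is the complete absence of symmetry in the general case. A generic nonnegatively curved metric can have trivial isometry group, so the quotient and slice machinery has nothing to act on and the inductive scheme never starts; one is then thrown back on the only universally available consequence, Gromov's Betti-number bound, which is far too weak to force either the finiteness of the rational homotopy groups or the remaining ellipticity constraints. Any honest plan therefore splits into two very unequal pieces: the symmetric case, where the methods above give real traction and where results such as those of this paper are the natural tools, and the asymmetric case, where no current technique appears adequate and where the decisive new idea still seems to be missing.
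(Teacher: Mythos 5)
The statement you were given is the Bott Conjecture, which the paper does not prove: it is stated explicitly as an open conjecture motivating the work, and the paper's only claims in its direction are that Theorem~\ref{T:RANK_BOUND} turns any violation of the rank bound into a counterexample, that Theorem~\ref{T:RIGIDITY} is consistent with it, and that the slice-maximal case is handled in \cite{GKRW}. Your refusal to offer a proof is therefore the correct response, and your survey of the symmetry-program approach and of the obstruction in the asymmetric case matches the paper's own framing; there is nothing to compare beyond that.
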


Although all manifolds known to admit positive or non-negative sectional curvature are rationally elliptic, examples of such manifolds are rare and difficult to find.  Nevertheless, Theorem \ref{T:RANK_BOUND} implies that a simply connected $n$-manifold admitting both a metric of non-negative curvature and an effective action by a torus of rank greater than $\left\lfloor\frac{2n}{3}\right\rfloor$ would be a counter-example to the Bott Conjecture.  On the other hand, in \cite{GGS} it is conjectured that $\left\lfloor\frac{2n}{3}\right\rfloor$ is the maximal rank of a torus which can act effectively and isometrically on a simply connected $n$-manifold with non-negative curvature.  Together with the Bott Conjecture, Theorem \ref{T:RIGIDITY} then provides further evidence for the expectation that there are strong restrictions on the topology of manifolds which admit a metric of non-negative curvature.

As it happens, the Bott Conjecture was verified in \cite{GKRW} in the presence of an effective, isometric torus action which is also slice maximal (see Section \ref{S:MaxEff} for a definition).  Theorems \ref{T:RANK_BOUND} and \ref{T:RIGIDITY} then yield the possible rational homotopy types of non-negatively curved, simply connected manifolds in the presence of a maximal-rank, effective, isometric, slice-maximal torus action, while C.\ Escher and C.\ Searle \cite{ES} have announced a proof of an analogue of the Rigidity Conjecture in this setting.

% TORAL RANK CONJECTURE

\bigskip
There is a further interesting consequence of Theorem \ref{T:RIGIDITY}.  Recall that the largest integer $r$ for which $M^n$ admits an almost-free $T^r$-action is called the \emph{toral rank} of $M^n$, and is denoted $\rk(M)$.  By Theorem \ref{T:RANK_BOUND}, it is clear that $\rk(M) \leq \left\lfloor \frac{n}{3} \right\rfloor$.  The Toral Rank Conjecture, formulated by S.\ Halperin, asserts that $\dim H^*(M ; \Q) \geq 2^{\rk(M)}$.

% COROLLARY

\begin{corollary} 
Let $M^n$ be a closed,  (simply connected) rationally elliptic, smooth $n$-manifold with a smooth, effective action of the $k$-torus $T^k$, $k\geq 1$.  If $k=\left\lfloor\frac{2n}{3}\right\rfloor$, or if $T^k$ is of rank $\left\lfloor\frac{n}{3}\right\rfloor$ and acts almost freely, then $M^n$ satisfies the Toral Rank Conjecture.
\end{corollary}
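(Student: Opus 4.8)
The plan is to use Theorem~\ref{T:RIGIDITY} to reduce the Toral Rank Conjecture, namely $\dim_\Q H^*(M;\Q)\ge 2^{\rk(M)}$, to a uniform computation on explicit products. Under the second hypothesis, applying Theorem~\ref{T:RIGIDITY}(1) to a rank-$\lfloor n/3\rfloor$ subtorus acting almost freely gives $M\req X\x\prod\sph^3$ with $X\in\{\sph^3,\sph^2\x\sph^3,\sph^5\}$; under the first hypothesis, Theorem~\ref{T:RIGIDITY}(2) gives $M\req N^m\x\prod\sph^3$ with $N^m$ in the listed family. In either case $M$ is rationally homotopy equivalent to a product $P$ whose factors are spheres together with the low-dimensional blocks $\CP^2$, $\sph^2\x\sph^2$, $\CP^2\#\CP^2$, and $T^1(\sph^2\x\sph^2)$. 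Since $\dim_\Q H^*(-;\Q)$ and $\pi_*(-)\ox\Q$ depend only on the rational homotopy type, it suffices to establish the inequality for $P$.

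The essential ingredient is the classical rational-homotopy bound on the rank of almost-free torus actions: if $T^r$ acts almost freely on a simply connected rationally elliptic space $X$, then $r\le e(X):=\dim_\Q(\pi_{\odd}(X)\ox\Q)-\dim_\Q(\pi_{\even}(X)\ox\Q)$. The reason is that the homotopy quotient $X_{hT^r}$ then has finite-dimensional rational cohomology, hence is again rationally elliptic, and ellipticity forces $\dim\pi_{\even}\le\dim\pi_{\odd}$; since the Borel fibration over $BT^r$ contributes $r$ generators to the even part, this reads $0\le e(X)-r$. Applying the bound to $M$ yields $\rk(M)\le e(M)=e(P)$. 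As $\pi_*(-)\ox\Q$ is additive over products while $H^*(-;\Q)$ is multiplicative by K\"unneth, the target inequality $\dim_\Q H^*(P;\Q)\ge 2^{e(P)}$ follows once $\dim_\Q H^*(F;\Q)\ge 2^{e(F)}$ is verified for each factor $F$ of $P$.

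It then remains to check this per-factor inequality. For an odd sphere $e=1$ and $\dim_\Q H^*=2$; for an even sphere and for $\CP^2,\sph^2\x\sph^2,\CP^2\#\CP^2$ one has $e=0$, so the inequality is immediate; and for $\sph^2\x\sph^3,\sph^2\x\sph^5$ and $T^1(\sph^2\x\sph^2)$ a short minimal-model (or Gysin) computation gives $e=1$ with $\dim_\Q H^*\ge 4$ (for the unit tangent bundle one finds the model $\Lambda(x_2,y_2,u_3,v_3,w_3)$ with $du=x^2,\ dv=y^2,\ dw=xy$, whence $e=3-2=1$ and $\dim_\Q H^*=6$). The crucial observation, and the reason Theorem~\ref{T:RANK_BOUND} alone does not suffice, appears precisely for $N^m=\sph^7$ and $N^m=\sph^5\x\sph^5$: there the estimate $\lfloor n/3\rfloor$ overestimates $e(M)$ by exactly one, whereas the homotopy bound gives $\rk(M)\le e(M)=\log_2\dim_\Q H^*(M;\Q)$ and the conjecture holds with equality. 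The main obstacle is thus conceptual rather than computational: one must replace the geometric estimate of Theorem~\ref{T:RANK_BOUND} by the sharp homotopy-theoretic toral-rank bound in order to obtain the conclusion in these two borderline families.
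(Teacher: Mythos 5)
Your proposal is correct and follows the argument the paper intends (the corollary is stated without an explicit proof): reduce to the model spaces of Theorem~\ref{T:RIGIDITY}, bound the toral rank by $-\chi_\pi(M)$ via inequality~\eqref{E:rat-ell-ineq-torus} of Proposition~\ref{P:ELLIPTIC_INEQ}, and verify the inequality factor by factor using additivity of $\pi_*\ox\Q$ and the K\"unneth formula. You also correctly flag the one genuinely delicate point, namely that for $\sph^7\x\prod\sph^3$ and $\sph^5\x\sph^5\x\prod\sph^3$ the cruder bound $\rk(M)\le\lfloor n/3\rfloor$ from Theorem~\ref{T:RANK_BOUND} would not suffice and the sharp homotopy-theoretic bound is needed.
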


% PROOF 

\begin{proof} Let $T^r$ act almost freely on $M$.  Given $H^2(M;\Q) = \Q^{b_2(M)}$, there is a principal $T^{b_2(M)}$-bundle over $M$ with (rationally) $2$-connected, rationally elliptic total space $P$.  As any action by a torus $T$ on $M$ lifts to a $T \x T^{b_2(M)}$ action on $P$, the effective $T^k$ action (resp. the almost-free $T^r$ action) on $M$ lifts to an effective $T^k \x T^{b_2(M)}$ action (resp. an almost-free $T^r \x T^{b_2(M)}$ action) on $P$.  Moreover, observe from Theorem \ref{T:RIGIDITY} that $k=\left\lfloor\frac{2n}{3}\right\rfloor$ implies that $k + b_2(M)=\left\lfloor \frac{2(n + b_2(M))}{3} \right\rfloor$, while $k=\left\lfloor\frac{n}{3}\right\rfloor$ and the $T^k$ action being almost free implies that the lifted $T^k \x T^{b_2(M)}$ action is almost free of rank $k + b_2(M)=\left\lfloor \frac{n + b_2(M)}{3} \right\rfloor$.  Now, since $H^2(P;\Q) = 0$, Theorem \ref{T:RIGIDITY} yields that $P$ must have the rational cohomology of a product of spheres of dimension $\geq 3$.  By \cite[Prop.\ 7.23]{FOT}, $P$ satisfies the Toral Rank Conjecture, i.e. $H^*(P;\Q) \geq 2^{r + b_2(M)}$.  The result now follows from the observation that $\dim H^*(P ; \Q) \leq \dim H^*(T^{b_2(M)} ; \Q) \cdot \dim H^*(M ; \Q)$.
\end{proof}

Finally, note that all of the statements above for almost-free torus actions hold in the more general situation of (compact) rationally elliptic topological spaces of finite formal dimension, i.e.~without any smoothness assumptions whatsoever.  On the other hand, smoothness is required in the case of effective torus actions to ensure that the slice representation is linear and in order to apply the results of \cite{GKRW}.

The paper is organised as follows: In Section \ref{S:PRELIM} some basic definitions and facts about group actions and rational ellipticity are collected.  Section \ref{S:PROOF_THM_A} contains the proof of the inequalities in Theorem \ref{T:RANK_BOUND}, as well as some simple corollaries.  Sections \ref{S:PROOF_THM_B-(1)} and \ref{S:MaxEff} deal with the classification statements of Theorem \ref{T:RIGIDITY}.  The proof that only finitely many rational homotopy types arise in the classification can be found in Section \ref{S:MaxEff} (the case $b_2(M^n) = 1$) and in Section \ref{S:Biquotients} (the more difficult case of $b_2(M^n) = 2$).  Finally, in Section \ref{S:DIFFEO_CLASS}, some stronger classification results in low dimensions are discussed.

% ACKNOWLEDGEMENTS

\begin{ack} All three authors would like to express their gratitude to Michael Wiemeler, for numerous discussions, and to the referee, whose comments helped improve the paper.  The second-named author wishes to thank Anand Dessai for conversations which later proved useful in Section \ref{S:DIFFEO_CLASS}.
\end{ack}

% SECTION: PRELIMINARIES
%----------------------------------------
%----------------------------------------

\section{Preliminaries}
\label{S:PRELIM}

% SS: GP ACTIONS
%----------------------------

\subsection{Group actions} \hspace*{1mm}

Let $\Phi: G\x X \longrightarrow X$,  $(g,x) \mapsto g \cdot x$, be a continuous action by a compact Lie group $G$ on a topological space $X$.  Denote the orbit of a point $x\in X$ under the action of $G$ by $G \cdot x \cong G/G_x$, where $G_x = \{ g \in G \mid g \cdot x = x \}$ is the \emph{isotropy subgroup} of $G$ at $x$.  If the space $X$ is a smooth manifold and $\Phi$ is a smooth map, then the action is said to be \emph{smooth} and, in that case, the orbits are smooth submanifolds of $X$.

The action is \emph{effective} if the subgroup $\{g \in G \mid \Phi(g, \cdot) = \id_X \} \In G$ is trivial, and it is \emph{almost free} (resp.\ \emph{free}) if the isotropy subgroup $G_x$ is finite (resp.\ trivial) for all $x \in X$.  The \emph{orbit} or \emph{quotient space} of the action will be denoted by $X/G$.  If $X$ is a smooth manifold and $G$ acts freely (resp.\ almost freely) on $X$, then $X/G$ is a smooth manifold (resp.\ orbifold) of dimension $\dim(X) - \dim(G)$.

To every compact Lie group $G$ one can associate a contractible space $EG$ on which $G$ acts freely.  The quotient space $BG = EG/G$ is called the \emph{classifying space} of $G$ and the principal $G$-bundle $G \to EG \to BG$ is called the \emph{universal} $G$-\emph{bundle}.

Given the action $\Phi$ of $G$ on $X$ above, there is a fibre bundle 
$$
X \to X_G \to BG
$$ 
associated to the universal $G$-bundle, where $X_G = EG \x_G X = (EG \x X)/G$ is the quotient of $EG \x X$ by the (free) diagonal $G$ action.  The space $X_G$ is called the \emph{Borel construction} corresponding to the action $\Phi$.  Furthermore, as $EG$ is contractible, $EG \x X$ is homotopy equivalent to $X$ and the principal $G$-bundle $G \to EG \x X \to X_G$ yields, up to homotopy, a $G$-bundle 
$$
G \to X \to X_G.
$$

The \emph{equivariant cohomology} of $X$ with respect to the action $\Phi$ and with coefficients in a ring $R$ is given by $H^*_G(X ; R) = H^*(X_G; R)$, i.e. the ordinary $R$-cohomology of the Borel construction $X_G$.  In particular, if $X$ is compact and $G$ is a torus, then the action $\Phi$ is almost free if and only if the inequality $\dim_\Q H^*_G(X ; \Q) < \infty$ holds \cite[Prop.\ 4.1.7]{AP}.

\medskip

% SS: RHT
%----------------------------

\subsection{Rational  homotopy theory}\label{SS:RHT} \hspace*{1mm}

Below (with minor abuses of terminology) is a brief summary of those aspects of rational homotopy theory pertinent to the results on rationally elliptic manifolds in the present article.  A more complete treatment can be found in, for example, \cite{FHT, FOT}.  At the end, a new definition of equivariance for rational homotopy equivalence is introduced.

Let $X$ be a simply connected topological space.  The \emph{rational homotopy groups} of $X$ are given by the $\Q$-vector spaces $\pi_i(X) \ox \Q$, $i \in \N$, of dimension $d_i(X) = \dim_\Q (\pi_i(X) \ox \Q)$.  The space $X$ is said to be \emph{rationally elliptic} if 
$$
\dim_\Q H^*(X; \Q) < \infty \text{\ and\ } \dim_\Q (\pi_*(X) \ox \Q) = \sum_{i = 1}^\infty d_i(X) < \infty.
$$

Whenever $\dim_\Q H^*(X; \Q) < \infty$, there is an integer $n_X$, called the \emph{formal dimension} of $X$, such that $H^{n_X}(X; \Q)\neq 0$ and $H^i(X; \Q) = 0$, for all $i > n_X$.  If $X$ is a closed manifold, then clearly $n_X = \dim(X)$, since $\pi_1(X) = 0$.  The \emph{homotopy Euler characteristic} of a rationally elliptic space $X$ is given by 
\[
\chi_\pi (X) = \sum_{i  = 1}^{\infty} (-1)^i \, d_i(X)\,.
\]

As $X$ is simply connected, set $V^0 = \Q$ and $V^1 = \{0\}$.  From the rational homotopy groups, one can then construct a graded vector space $V_X = \oplus_{i = 0}^\infty V^i$ associated to $X$, where
\[
V^i \cong \textrm{Hom}(\pi_i(X),\QQ)\cong \pi_i(X) \ox \Q \cong \Q^{d_i(X)} \, ,\ \ i \geq 2.
\]
An element $v \in V^i$ is said to be \emph{homogeneous} of \emph{degree} $\deg(v) = i$.  

The tensor algebra $TV_X$ on $V_X$ has an associative multiplication with a unit $1 \in V^0$ given by the tensor product $V^i \ox V^j \to V^{i + j}$.  Taking the quotient of $TV_X$ by the ideal  generated by the elements $v \ox w - (-1)^{ij} w \ox v$, where $\deg(v) = i$, $\deg(w) = j$, yields the \emph{free commutative graded algebra} $\wedge V_X$.  In particular, multiplication in $\wedge V_X$ satisfies $v \cdot w = (-1)^{ij} w \cdot v$, for all $v \in V^i$ and $w \in V^j$.

Given a homogeneous basis $\{v_1, \dots, v_N \}$ of $V_X$, set $\wedge(v_1, \dots, v_N) = \wedge V_X$.  Moreover, denote the linear span of elements $v_{i_1} v_{i_2} \cdots v_{i_q} \in \wedge V_X$, $1 \leq i_1 \leq i_2 \leq \dots \leq i_q \leq N$, of word-length $q$ by $\wedge^q V_X$.  Define $\wedge^+ V_X = \oplus_{q \geq 1} \wedge^q V_X$.

As it turns out, $\wedge V_X$ possesses a linear \emph{differential} $d_X$, i.e. a linear map $d_X : \wedge V_X \to \wedge V_X$ satisfying the following properties:
\begin{enumerate}
\item \label{L:deg} 
$d_X$ has degree $+1$, i.e. $d_X$ maps elements of degree $i$ to elements of degree $i+1$.
\item $d_X^2 = 0$.
\item $d_X$ is a derivation, i.e. $d_X (v \cdot w) = d_X (v)\cdot w + (-1)^{\deg(v)} v \cdot d_X(w)$.
\item \label{L:nil}
$d_X$ is nilpotent, i.e. there is an increasing sequence of graded subspaces $V(0) \In V(1) \In \cdots $ such that $V = \cup_{k=0}^\infty V(k)$, $d_X|_{V(0)} \equiv 0$ and $d_X : V(k) \to \wedge V(k-1)$, for all $k \geq 1$.
\end{enumerate}
In addition, $d_X$ satisfies:
\begin{enumerate}[resume]
\item $d_X$ is decomposable, i.e. $\im(d_X) \In \wedge^{\geq 2} V_X$.
\end{enumerate}

Since $d_X$ is a derivation, it clearly depends only on its restriction to $V_X$.  The pair $(\wedge V_X, d_X)$ is called the \emph{minimal model} for $X$ and its corresponding (rational) cohomology satisfies $H^*(\wedge V_X, d_X) = H^*(X; \Q)$.

If $Y$ is another simply connected topological space, then $X$ and $Y$ are said to be \emph{rationally homotopy equivalent} (denoted $X \simeq_\Q Y$) if their minimal models are isomorphic, i.e. if there is a linear isomorphism $f:\wedge V_X \to \wedge V_Y$ which respects the grading and satisfies $f \circ d_X = d_Y \circ f$ and $f(v \cdot w) = f(v)\cdot f(w)$.  It is important to note that the isomorphism $f$ is not necessarily induced by a map between $X$ and $Y$.  In fact, $X \simeq_\Q Y$ if and only if there is a chain of maps $X \to Y_1 \leftarrow Y_2 \to \cdots \leftarrow Y_s \to Y$ such that the induced maps on rational cohomology are all isomorphisms.

Let now $E$ and $X$ be simply connected topological spaces and let $p: E \to X$ be a Serre fibration with simply connected fibre $F$.  If $(\wedge V_X, d_X)$ and $(\wedge V_F, d_F)$ are the minimal models of $X$ and $F$, respectively, then $E$ has a \emph{relative minimal model} of the form 
$$
(\wedge V_X \ox \wedge V_F, D) = (\wedge (V_X \oplus V_F), D),
$$ 
where $D|_{\wedge V_X} = d_X$ and $D(v) - d_F(v) \in \wedge^+ V_X \ox \wedge V_F$, for all $v \in V_F$.  Note that the relative minimal model $(\wedge V_X \ox \wedge V_F, D)$ need not be a minimal model for $E$ since, although the differential $D$ satisfies the conditions analogous to \eqref{L:deg}--\eqref{L:nil} above, it may not be decomposable.  Nevertheless, one still has $H^*(\wedge V_X \ox \wedge V_F, D) = H^*(E; \Q)$.

% PROP
% NOTE: The first two equations are independent of the existence of a torus action. 

\begin{prop}[\cite{FHT}, Chap.~32]
\label{P:ELLIPTIC_INEQ}
Let $X$ be a (simply connected) rationally elliptic topological space of formal dimension $n_X$.
Then:
\begin{align}
\label{E:rat-ell-ineq-even}	&n_X \geq \sum_{j=1}^\infty (2j) \, d_{2j}(X) \,; \\
\label{E:rat-ell-eq}			&n_X = \sum_{j=1}^\infty (2j+1) \, d_{2j+1}(X) - \sum_{j=1}^\infty (2j-1) \, d_{2j}(X).
\end{align}
Suppose further that $X$ admits an almost-free action by a torus of rank $k$.  Then
\begin{align}
\label{E:rat-ell-ineq-torus} 	&k \leq -\chi_\pi (X) \,.
\end{align}
\end{prop}

% LEM: BOREL CONSTRUCTION IS RAT ELLIPTIC
The following lemma is well known, but a proof is provided for completeness.

\begin{lem} 
\label{L:Borel_RE}
Assume that a $k$-dimensional torus $T^k$ acts almost freely on a compact, simply connected topological space $X$ of formal dimension $n$.  If $X$ is rationally elliptic, then the Borel construction $X_T$ is rationally elliptic and of formal dimension $n - k$.
\end{lem}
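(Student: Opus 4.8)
The plan is to verify the two finiteness conditions in the definition of rational ellipticity for $X_T$ separately, and then to read off the cohomological dimension from the formal-dimension identity \eqref{E:rat-ell-eq}. Everything is organised around the Borel fibration
$$
X \to X_T \to BT
$$
associated to the action, where $BT \cong (\CP^\infty)^k$.

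First I would settle simple connectivity and finiteness of rational cohomology. Simple connectivity of $X_T$ is immediate from the homotopy long exact sequence of the fibration: since $\pi_1(X) = 0$ and $\pi_1(BT) = 0$, the segment $\pi_1(X) \to \pi_1(X_T) \to \pi_1(BT)$ forces $\pi_1(X_T) = 0$. For the cohomology, I use that $X$ is compact and the $T^k$-action is almost free, so that $\dim_\Q H^*(X_T; \Q) = \dim_\Q H^*_T(X; \Q) < \infty$ by \cite[Prop.~4.1.7]{AP} (recalled in Section~\ref{S:PRELIM}).

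Next I would control the rational homotopy via the same fibration. Tensoring its homotopy long exact sequence with the flat $\Z$-module $\Q$ preserves exactness, and since $\pi_i(BT) \ox \Q = \Q^k$ for $i = 2$ and vanishes otherwise, the sequence gives $\pi_i(X_T) \ox \Q \cong \pi_i(X) \ox \Q$ for all $i \geq 3$, together with a short exact sequence
$$
0 \to \pi_2(X) \ox \Q \to \pi_2(X_T) \ox \Q \to \Q^k \to 0
$$
in degree $2$. Hence $d_i(X_T) = d_i(X)$ for $i \neq 2$ and $d_2(X_T) = d_2(X) + k$, so $\dim_\Q(\pi_*(X_T) \ox \Q) = \dim_\Q(\pi_*(X) \ox \Q) + k < \infty$ because $X$ is rationally elliptic. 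Combined with the previous paragraph, this shows $X_T$ is rationally elliptic.

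Finally, I would compute the cohomological dimension by comparing the formal-dimension identity \eqref{E:rat-ell-eq} for the two elliptic spaces $X$ and $X_T$. All odd-degree contributions $d_{2j+1}$ agree (every index is $\geq 3$), while among the even terms only $d_2$ changes, and it increases by exactly $k$; since $d_2$ enters \eqref{E:rat-ell-eq} with coefficient $2\cdot 1 - 1 = 1$, subtracting the identity for $X$ from that for $X_T$ yields $n_{X_T} = n_X - k = n - k$. The only point requiring care—and the sole genuine obstacle—is the applicability of \eqref{E:rat-ell-eq} to $X_T$: this identity is the general formal-dimension formula for rationally elliptic spaces (\cite[Chap.~32]{FHT}), valid with no reference to a group action, so it applies the moment $X_T$ has been shown to be elliptic. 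The rest of the argument is a routine bookkeeping of the two finiteness conditions—cohomological finiteness coming from almost-freeness and homotopical finiteness from the ellipticity of $X$.
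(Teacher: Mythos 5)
Your proof is correct, and it reaches the same two finiteness statements by essentially the same means as the paper (almost-freeness plus \cite[Prop.~4.1.7]{AP} for cohomological finiteness, a homotopy long exact sequence for homotopical finiteness), but it computes the cohomological dimension by a genuinely different route. The paper works with the fibration $T^k \to X \to X_T$ and reads off $n_{X_T} = n - k$ from the top corner of its Serre spectral sequence, using only that $\dim_\Q H^*(X_T;\Q) < \infty$; you instead work with $X \to X_T \to BT$, record that $d_2(X_T) = d_2(X) + k$ while all other $d_i$ are unchanged, and then subtract the Friedlander--Halperin formal-dimension identity \eqref{E:rat-ell-eq} for $X$ from that for $X_T$. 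Your bookkeeping is right ($d_2$ carries coefficient $-(2\cdot 1 - 1) = -1$, so the formal dimension drops by exactly $k$), and you correctly flag the one point needing care: \eqref{E:rat-ell-eq} is stated in Proposition~\ref{P:ELLIPTIC_INEQ} under a torus-action hypothesis, but it is the general formal-dimension formula for elliptic spaces and needs no group action, so it applies once ellipticity of $X_T$ is established; implicitly you also use that for an elliptic space the formal dimension coincides with the top nonzero rational cohomology degree, which is part of the same theorem in \cite[Chap.~32]{FHT}. The trade-off is mild: the paper's spectral-sequence argument identifies $n_{X_T}$ using only cohomological finiteness (so it is logically independent of the ellipticity of $X_T$), whereas your argument must establish ellipticity of $X_T$ first; in exchange you avoid the spectral sequence entirely and get the homotopy-group relations $d_2(X_T) = d_2(X)+k$, $d_i(X_T)=d_i(X)$ for $i\neq 2$, which the paper in any case needs and re-derives later.
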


\begin{proof}
As previously mentioned, the inequality $\dim_\Q H^*(X_T ; \Q) < \infty$ follows from Proposition 4.1.7 in \cite{AP}.  Given this, the Serre spectral sequence of the (homotopy) fibration $T^k \to X \to X_T$  yields that the formal dimension of $X_T$ is $n - k$.  Therefore, it remains to show only that $\dim_\Q (\pi_*(X_T) \ox \Q) < \infty$.  As $X$ is rationally elliptic and $\pi_j(T^k) = 0$ for all $j \geq 2$, this follows immediately from the long exact sequence of homotopy groups for the fibration $T^k \to X \to X_T$.
\end{proof}

% Equivariance in RHT

The following definition gives a notion of equivariant rational homotopy equivalence.  In this article, it will be used in the context of torus actions.

\begin{defn} 
\label{D:ERH}
Let $X$ and $Y$ be simply connected topological spaces which both admit an effective action by a compact Lie group $G$.  A rational homotopy equivalence between $X$ and $Y$ is said to be \emph{$G$-equivariant} if the corresponding Borel constructions $X_G$ and $Y_G$ are also rationally homotopy equivalent and there exists a commutative diagram
\[
\xymatrix{
H^*(Y;\Q) \ar[r]& H^*(X;\Q) \\
H^*_{G}(Y,\Q) \ar[u]^{} \ar[r]&  H^*_{G}(X,\Q)\ar[u]^{}
}
\]
where the horizontal arrows are isomorphisms induced by the respective rational homotopy equivalences.
\end{defn}

% SECTION: PROOF OF THEOREM A    
%---------------------------------------------------
%---------------------------------------------------

\section{Bounds on the rank of a torus action}
\label{S:PROOF_THM_A}
Let  $M^n$ be an $n$-manifold which is smooth, closed,  simply connected and rationally elliptic, and on which  the $k$-torus $T^k$ acts smoothly and effectively.

% SS: PROOF OF PART (1)
%------------------------------------

\subsection*{Almost-free bound} Assume that $T^k$ acts on $M^n$ almost freely and let  $M_T$ be the corresponding Borel construction.  By Lemma~\ref{L:Borel_RE}, $M_T$ is rationally elliptic of formal dimension $n - k$. Therefore, by Proposition~\ref{P:ELLIPTIC_INEQ},
\begin{align}
\label{E:Homotop_ineq}
	n-k 	& \geq  	\sum_j(2j) \, d_{2j}(M_T)\notag \\%[.2cm]
		& \geq 	 2 \, d_2(M_T)\\%[.2cm]
		& \geq 	 2k. \notag
\end{align}
It now follows immediately that $3k\leq n$, i.e. $k \leq \left\lfloor \frac{n}{3} \right\rfloor$.

% REMARK

\begin{rem}
Observe that the argument to establish an upper bound on the rank of a torus acting almost freely goes through verbatim in the case of a rationally elliptic topological space $X$ of formal dimension $n$.  However, (local) smoothness of the space and action are needed to obtain the effective bound below.
\end{rem}

% NOTE: Reference for Hsiang's result: Theorem 7.7 of Felix, Oprea and Tanre. According to [FOT], the result appears in Hsiang's book "Cohomological theory of topological transformation groups". See also Allday & Puppe, prop. 4.1.7

% SS: PROOF OF PART (2)
%------------------------------------

\subsection*{Effective bound} If the $T^k$ action is only effective, let $s > 0$ be the dimension of the largest isotropy subgroup of the action. Since $M^n$ is compact, there exist only finitely many orbit types. By looking at the Lie algebra of $T^k$, it is clear that a subgroup $T^{k-s} \In T^k$ can be found, whose intersection with each isotropy group is finite. As a consequence, $T^{k-s}$ acts almost freely on $M^n$. The bound on the rank of almost-free actions established above then yields  $3(k-s)\leq n$. 

Suppose now that $p \in M^n$ is a point with isotropy subgroup $T_p$ of dimension $s$.  The orbit $T^k \! \cdot p$ through $p$ has dimension $k - s$, and the normal space $\nu_p(T^k \! \cdot p)$ at $p$ has dimension $n-k+s$.  The connected component of the identity in $T_p$ is a torus $T^s$ of rank $s$, which acts linearly and effectively on $\nu_p(T^k \! \cdot p)$.  Hence $2s \leq n - k + s$ or, equivalently, $s \leq n - k$.

Combining these two inequalities yields
\[
n \geq 3(k-s) \geq 3k - 3(n-k) = 6k - 3n,
\]
from which it follows $3k \leq 2n$.

% REMARK

\begin{rem} In establishing an upper bound on the rank of a torus acting effectively, the hypothesis that $M^n$ is rationally elliptic was used only to ensure that $3(k-s) \leq n$.  Even if this hypothesis is dropped, the inequality $s \leq n - k$ remains valid.  Therefore, if $3s \geq n$, one obtains $n \leq 3s \leq 3(n-k)$ and, consequently, $3k \leq 2n$.

In particular, to confirm the upper bound $\left\lfloor \frac{2n}{3} \right\rfloor$ on the symmetry rank of a non-negatively curved, simply connected $n$-manifold conjectured in \cite{GGS}, one need only show that $k \leq \left\lfloor \frac{2n}{3} \right\rfloor$ when $3s < n$, i.e. whenever the maximal dimension of an isotropy subgroup is small. C.\ Escher and C.\ Searle have independently made a similar observation in their preprint \cite{ES}.
\end{rem}

% COROLLARY
\bigskip
To finish this section, a number of simple applications of Theorem \ref{T:RANK_BOUND} are provided, the statements of which may be useful in their own right.

\begin{cor}
Let $M^n$ be a closed,  (simply connected) rationally elliptic, smooth $n$-manifold. If a torus $T^k$ acts smoothly on $M^n$ with cohomogeneity $d$, then $n\leq 3d$.
\end{cor}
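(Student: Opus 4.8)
The plan is to reduce to an effective action and then invoke the rank bound of Theorem~\ref{T:RANK_BOUND}. Since the cohomogeneity $d = \dim(M^n/T^k)$ is unchanged upon passing to the effective quotient, let $K \subseteq T^k$ denote the kernel of the action. As $K$ is a closed subgroup, the quotient $T^{k'} := T^k/K$ is a compact, connected, abelian Lie group, hence a torus; it acts smoothly and effectively on $M^n$ with exactly the same orbits as $T^k$, and therefore with the same cohomogeneity $d$.

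Next I would compute $k'$ in terms of $n$ and $d$. Averaging over $T^{k'}$ yields an invariant Riemannian metric, so the action is isometric and every isotropy group has totally geodesic fixed-point set. Were the principal isotropy group $P$ of positive dimension, the identity component of $P$ would fix an open, dense subset of $M^n$; but a totally geodesic submanifold containing an open set must equal the connected manifold $M^n$, contradicting effectiveness. Hence $P$ is finite, the principal orbits have dimension exactly $k'$, and since the cohomogeneity equals the codimension of a principal orbit, one obtains $d = n - k'$, i.e.\ $k' = n - d$.

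Finally, Theorem~\ref{T:RANK_BOUND} applied to the effective action of $T^{k'}$ gives $k' \leq \left\lfloor \frac{2n}{3}\right\rfloor$, so $3k' \leq 2n$. Substituting $k' = n - d$ produces $3(n-d) \leq 2n$, which rearranges to $n \leq 3d$, as desired. The only genuinely delicate step is the finiteness of the principal isotropy of an effective torus action (equivalently, that the principal orbits have full dimension $k'$); this is standard, but it is precisely where the effectiveness hypothesis and the connectedness of $M^n$ are really used, and it is the point I expect to need the most care.
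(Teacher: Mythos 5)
Your proof is correct and follows essentially the same route as the paper: pass to the effective quotient torus, observe that the principal isotropy is then finite so that $d = n - k'$, and apply the bound $3k' \leq 2n$ from Theorem~\ref{T:RANK_BOUND}. The paper compresses the middle step into the remark that the principal isotropy group of a torus action fixes all of $M^n$ pointwise (hence coincides with the kernel), whereas you justify it with an invariant metric and totally geodesic fixed-point sets; both are valid.
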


% PROOF

\begin{proof}
Without loss of generality, it may be assumed that $T^k$ acts effectively on $M^n$, since the principal isotropy group fixes all of $M^n$ pointwise. It follows that $d=n-k$ and $3k\leq 2n = 3n-n$, whence $n\leq 3(n-k)=3d$.
\end{proof}

It was shown in \cite{GH} that a closed, smooth, simply connected manifold which admits a cohomogeneity-one action by a compact Lie group $G$ is rationally elliptic.  If one wishes to classify cohomogeneity-one manifolds, it is useful to be able to find restrictions on which Lie groups can arise.  

% COROLLARY

\begin{cor}
Let $M^n$ be a smooth, closed, simply connected $n$-manifold on which a compact Lie group $G$ acts effectively and smoothly with cohomogeneity one.  Then $3\rank(G)\leq 2n$.
\end{cor}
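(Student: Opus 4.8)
The plan is to forget about the cohomogeneity of the $G$-action almost entirely and instead exploit it only to guarantee rational ellipticity, after which the effective rank bound of Theorem \ref{T:RANK_BOUND} applied to a maximal torus does all the work. First I would invoke \cite{GH}: since $M^n$ is closed, smooth, and carries a cohomogeneity-one action by the compact Lie group $G$, it is rationally elliptic. Together with the standing hypotheses that $M^n$ is closed, smooth, and simply connected, this places $M^n$ squarely within the setting of the effective bound in Theorem \ref{T:RANK_BOUND}.

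Next I would pass from $G$ to a maximal torus $T^k \In G$, so that $k = \rank(G)$. The key elementary observation is that the restricted $T^k$-action is still effective: the kernel of the $T^k$-action is $T^k \cap \ker(\Phi)$, where $\Phi$ denotes the $G$-action, and since $G$ acts effectively one has $\ker(\Phi) = \{e\}$, forcing the restricted kernel to be trivial. Hence $T^k$ acts smoothly and effectively on the rationally elliptic manifold $M^n$.

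Applying the effective-action part of Theorem \ref{T:RANK_BOUND} to this $T^k$-action then gives $k \leq \left\lfloor \tfrac{2n}{3} \right\rfloor$, and in particular $3k \leq 2n$; since $k = \rank(G)$, this is exactly the desired inequality $3\rank(G) \leq 2n$. Note that no lower bound on $n$ is needed here, as Theorem \ref{T:RANK_BOUND} imposes none.

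I do not expect a genuine obstacle in this argument: the substance has already been absorbed into \cite{GH} and Theorem \ref{T:RANK_BOUND}. The only point requiring a moment's care is the reduction to the maximal torus, namely checking that effectiveness of the $G$-action is inherited by $T^k$; everything else is a direct citation. The conceptual content is simply the recognition that the cohomogeneity-one hypothesis is used solely to supply rational ellipticity, with the rank estimate itself coming from the torus bound.
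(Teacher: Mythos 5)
Your proposal is correct and matches the paper's own argument exactly: the paper also cites \cite{GH} for rational ellipticity of cohomogeneity-one manifolds and then applies Theorem \ref{T:RANK_BOUND} to the (automatically effective) action of a maximal torus of $G$. Your extra remark verifying that effectiveness passes to the maximal torus is a correct, if routine, detail that the paper leaves implicit.
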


% PROOF

\begin{proof}
By considering the action on $M^n$ of the maximal torus inside $G$, the result follows immediately from Theorem \ref{T:RANK_BOUND}.
\end{proof}

% COROLLARY
In fact, given some mild control on the topology of principal orbits, one can do even better.

\begin{cor}
Let $M^n$ be a smooth, closed $n$-manifold on which a compact Lie group $G$ acts effectively and smoothly.  If the principal $G$-orbits are simply connected and of codimension $d$, then $\rank(G) \leq  \left\lfloor \frac{2(n-d)}{3} \right\rfloor$. 
\end{cor}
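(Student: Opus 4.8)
The plan is to reduce the statement to Theorem~\ref{T:RANK_BOUND} by discarding $M^n$ entirely and working on a single principal orbit. First I would fix a principal isotropy subgroup $H \leq G$, so that a principal orbit has the form $P = G * p \cong G/H$, a smooth, closed manifold (compact since $G$ is compact) of dimension $n - d$, the codimension of the orbit being $d$. By hypothesis $P$ is simply connected, and since $P$ is a compact homogeneous space it is rationally elliptic; this is classical (its minimal model is pure, cf.\ \cite{FHT}). Thus $P$ is a manifold of the type to which Theorem~\ref{T:RANK_BOUND} applies.

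Next I would let $T^k \leq G$ be a maximal torus, so that $k = \rank(G)$, and restrict the transitive left $G$-action on $P$ to $T^k$. The crucial point is that this induced $T^k$-action is \emph{effective}. The ineffective kernel of the $G$-action on $P \cong G/H$ is $N = \bigcap_{g \in G} gHg^{-1}$, the largest normal subgroup of $G$ contained in $H$. Being normal and contained in $H$, the subgroup $N$ lies in every conjugate $gHg^{-1}$, hence in every principal isotropy subgroup, and so fixes the entire (open, dense) principal stratum of $M^n$ pointwise. By continuity $N$ then fixes all of $M^n$, so effectiveness of the $G$-action on $M^n$ forces $N = \{e\}$. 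Consequently the kernel of the $T^k$-action on $P$ is $T^k \cap N = \{e\}$, establishing effectiveness.

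Having arranged this, $P$ is a smooth, closed, simply connected, rationally elliptic manifold of dimension $n-d$ carrying an effective $T^k$-action, so Theorem~\ref{T:RANK_BOUND} directly yields $k \leq \left\lfloor \frac{2(n-d)}{3} \right\rfloor$, i.e.\ $\rank(G) \leq \left\lfloor \frac{2(n-d)}{3} \right\rfloor$, as desired.

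The only content beyond routine bookkeeping is the effectiveness of the induced $T^k$-action on $P$, and I expect this to be the main point to get right: it is exactly here that the effectiveness of the $G$-action on $M^n$ is used, via the density of the principal stratum to conclude $N = \{e\}$. Everything else — the dimension count, the rational ellipticity of the homogeneous orbit, and the final appeal to Theorem~\ref{T:RANK_BOUND} — is then immediate.
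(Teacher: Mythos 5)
Your proof is correct and follows essentially the same route as the paper: pass to a principal orbit (a simply connected, closed, rationally elliptic homogeneous space of dimension $n-d$), show the maximal torus acts effectively on it via the density of the principal stratum, and apply Theorem~\ref{T:RANK_BOUND}. Your identification of the ineffective kernel as $N=\bigcap_{g}gHg^{-1}$ is just a more explicit unpacking of the paper's one-line density argument.
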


% PROOF

\begin{proof}
As the $G$-orbits are homogeneous spaces, they are rationally elliptic.  The maximal torus $T$ of $G$ must act effectively on a principal orbit since, otherwise, the ineffective kernel of the $T$ action would act trivially on the regular part of $M^n$, i.e. on the open, dense collection of all principal $G$-orbits, hence on all of $M^n$, contradicting the effectivity hypothesis for the $G$ action.  As a principal orbit has dimension $n-d$, the result follows.
\end{proof}

% SECTION: PROOF OF THEOREM B - PART (1)     
%-----------------------------------------------------------------
%-----------------------------------------------------------------

\section{Maximal almost-free actions}
\label{S:PROOF_THM_B-(1)}

The existence of an almost-free torus action of maximal rank has strong implications for the topology of the space.  The lemmas in this section together ensure that such a space is rationally homotopy equivalent to one of $\prod \sph^3$, $\sph^2 \x \prod \sph^3$ or $\sph^5 \x \prod \sph^3$, thus verifying Theorem \ref{T:RIGIDITY}\eqref{L:AF}. 

% LEMMA

\begin{lem}
\label{L:d_even}
Let  $M^n$ be a smooth, closed, (simply connected) rationally elliptic $n$-manifold on which  the torus $T^k$ of rank $k = \left\lfloor \frac{n}{3} \right\rfloor$ acts smoothly and almost freely.  Then 
\[
d_2(M) \in \{0,1\} \text{ and } d_{2j}(M) = 0, \text{ for all } j\geq 2,
\] 
where $d_2(M) = 1$ is only possible if $n \equiv 2 \mod 3$.
\end{lem}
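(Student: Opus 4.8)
The plan is to apply the even-degree ellipticity inequality \eqref{E:rat-ell-ineq-even} not to $M^n$ directly—where the cohomological dimension $n$ is too large to yield any useful constraint—but to the Borel construction $M_T$, whose smaller cohomological dimension $n-k$ is exactly what is needed. First I would invoke Lemma~\ref{L:Borel_RE}: since $T^k$ acts almost freely on the rationally elliptic manifold $M^n$, the Borel construction $M_T$ is rationally elliptic of cohomological dimension $n-k$, so Proposition~\ref{P:ELLIPTIC_INEQ} applies to it.

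Next I would compare the rational homotopy of $M$ and $M_T$ using the homotopy fibration $T^k \to M \to M_T$. Because $\pi_j(T^k) = 0$ for $j \geq 2$ and $M$ is simply connected, the long exact sequence of homotopy groups gives $\pi_i(M) \cong \pi_i(M_T)$ for all $i \geq 3$, together with a short exact sequence
\[
0 \to \pi_2(M) \to \pi_2(M_T) \to \mathbb{Z}^k \to 0.
\]
Tensoring with $\Q$ yields $d_{2j}(M_T) = d_{2j}(M)$ for $j \geq 2$ and $d_2(M_T) = d_2(M) + k$.

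With these identities in hand, applying \eqref{E:rat-ell-ineq-even} to $M_T$ gives
\[
n - k \;\geq\; \sum_{j \geq 1}(2j)\,d_{2j}(M_T) \;=\; 2\bigl(d_2(M) + k\bigr) + \sum_{j \geq 2}(2j)\,d_{2j}(M),
\]
which rearranges to the key estimate
\[
n - 3k \;\geq\; 2\,d_2(M) + \sum_{j \geq 2}(2j)\,d_{2j}(M).
\]
Since $k = \left\lfloor \frac{n}{3} \right\rfloor$, the left-hand side equals $n - 3\left\lfloor \frac{n}{3} \right\rfloor = n \bmod 3 \in \{0,1,2\}$, while every term on the right is nonnegative, and each summand $(2j)\,d_{2j}(M)$ with $j \geq 2$ contributes at least $4$ whenever it is nonzero.

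The conclusion then follows from a short case analysis on $n \bmod 3$. If $n \equiv 0$ or $n \equiv 1 \pmod 3$, the right-hand side is at most $1$, which forces $d_2(M) = 0$ and $d_{2j}(M) = 0$ for all $j \geq 2$. If $n \equiv 2 \pmod 3$, the bound of $2$ still annihilates every $d_{2j}(M)$ with $j \geq 2$ and leaves only the possibility $d_2(M) \in \{0,1\}$. This simultaneously establishes all three assertions, in particular that $d_2(M) = 1$ can occur only when $n \equiv 2 \pmod 3$. I do not anticipate a genuine obstacle here; the one essential idea is to pass to $M_T$ rather than to work with $M$ directly, since the gain of $k$ in $d_2$ together with the drop of $k$ in cohomological dimension is precisely what converts the weak inequality $n \geq \sum_j (2j)\,d_{2j}(M)$ into the sharp bound above.
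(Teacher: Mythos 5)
Your argument is correct and follows essentially the same route as the paper: pass to the Borel construction $M_T$ via Lemma~\ref{L:Borel_RE}, use the long exact homotopy sequence of $T^k \to M \to M_T$ to get $d_2(M_T) = d_2(M) + k$ and $d_{2j}(M_T) = d_{2j}(M)$ for $j \geq 2$, and then apply inequality \eqref{E:rat-ell-ineq-even} to obtain $n - 3k \geq 2\,d_2(M) + \sum_{j \geq 2}(2j)\,d_{2j}(M)$ followed by the case analysis on $n \bmod 3$. No gaps.
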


% PROOF

\begin{proof}
Observe first that $n = 3k + \mu$, $\mu \in \{0,1,2\}$, and that the long exact homotopy sequence for the homotopy fibration $T^k \to M \to M_T$ yields $d_2(M_T) = d_2(M) + k$ and $d_j (M_T) = d_j(M)$ for all $j \geq 3$.

By Lemma \ref{L:Borel_RE}, $M_T$ is rationally elliptic of formal dimension $n - k$.  Hence, by Proposition \ref{P:ELLIPTIC_INEQ}, 
\[
n - k \geq \sum_{j = 1}^\infty (2j)\, d_{2j} (M_T) = 2k + \sum_{j = 1}^\infty (2j)\, d_{2j} (M)\, ,
\]
from which it follows that
\[
\mu \geq \sum_{j = 1}^\infty (2j)\, d_{2j} (M) \,.
\]
Consequently, if $\mu \in \{0,1\}$, then $d_{2j}(M) = 0$ for all $j \geq 1$, while if $\mu = 2$, then $d_2(M) \in \{0,1\}$ and $d_{2j}(M) = 0$ for all $j \geq 2$.
\end{proof}

This information determines the possibilities for the rest of the rational homotopy groups.

% LEMMA

\begin{lem}
\label{L:RHG}
Let  $M^n$ be a smooth, closed, (simply connected) rationally elliptic $n$-manifold on which  the torus $T^k$ of rank $k = \left\lfloor \frac{n}{3} \right\rfloor$ acts smoothly and almost freely.  Then $n \not\equiv 1 \mod 3$.  Furthermore, if $n \equiv 0 \mod 3$, then 
$$
d_3(M) = k \text{ and } d_j(M) = 0, \text{ for all } j \neq 3,
$$
whereas, if $n \equiv 2 \mod 3$, either
$$
d_3(M) = k-1, \ d_5(M) = 1 \text{ and } d_j(M) = 0, \text{ for all } j \neq 3,5,
$$
or
$$
d_2(M) = 1, \ d_3(M) = k + 1 \text{ and } d_j(M) = 0, \text{ for all } j \neq 2,3.
$$
\end{lem}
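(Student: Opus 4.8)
The plan is to apply Proposition~\ref{P:ELLIPTIC_INEQ} directly to $X = M$ (rather than to the Borel construction), exploiting Lemma~\ref{L:d_even} to kill almost all of the even rational homotopy. Write $n = 3k + \mu$ with $\mu \in \{0,1,2\}$, and abbreviate $d_2 := d_2(M) \in \{0,1\}$ and $d_{\mathrm{odd}} := \sum_{j \geq 1} d_{2j+1}(M)$. By Lemma~\ref{L:d_even} the only possibly nonzero even rational homotopy of $M$ is $d_2$, so the even sums appearing in \eqref{E:rat-ell-eq} and in $\chi_\pi(M)$ each reduce to a single term. Consequently \eqref{E:rat-ell-eq} becomes
\[
\sum_{j \geq 1} (2j+1)\, d_{2j+1}(M) = n + d_2,
\]
and, since $-\chi_\pi(M) = d_{\mathrm{odd}} - d_2$, the torus inequality \eqref{E:rat-ell-ineq-torus} becomes $d_{\mathrm{odd}} \geq k + d_2$.

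The crucial device is to measure how far the odd generators are from all lying in degree $3$. Define the excess
\[
E := \sum_{j \geq 1} (2j+1)\, d_{2j+1}(M) - 3\, d_{\mathrm{odd}} = \sum_{j \geq 2} 2(j-1)\, d_{2j+1}(M),
\]
which is manifestly a nonnegative even integer. Feeding the two relations above into this definition gives $n + d_2 = 3\,d_{\mathrm{odd}} + E \geq 3(k + d_2) + E$, that is,
\[
\mu \;\geq\; 2 d_2 + E.
\]
This single inequality, together with the fact that $3\,d_{\mathrm{odd}}$ is divisible by $3$, controls the entire argument.

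Now I would run the case analysis on $\mu$. For $\mu = 1$ the inequality forces $d_2 = E = 0$ (as $E$ is even), whence $3\,d_{\mathrm{odd}} = n = 3k+1$, contradicting divisibility by $3$; thus $n \not\equiv 1 \bmod 3$. For $\mu = 0$ one again gets $d_2 = E = 0$, so $d_{\mathrm{odd}} = k$ and, since $E = 0$ places every odd generator in degree $3$, one finds $d_3(M) = k$ with all other $d_j(M) = 0$. For $\mu = 2$ there are two branches: if $d_2 = 1$ then $E = 0$ and $d_{\mathrm{odd}} = k+1$, giving $d_2(M) = 1$, $d_3(M) = k+1$ and nothing else; if $d_2 = 0$ then $E \in \{0,2\}$, the value $E = 0$ again contradicting divisibility ($3\,d_{\mathrm{odd}} = 3k+2$), so $E = 2$, forcing exactly one degree-$5$ generator and the rest in degree $3$, i.e. $d_5(M) = 1$, $d_3(M) = k-1$ and nothing else.

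I expect no serious obstacle here; the only point requiring a little care is the identification, in the formula $E = \sum_{j \geq 2} 2(j-1)\,d_{2j+1}(M)$, of the solution $E = 2$ with a single class in degree $5$. Since the weights $2(j-1)$ range over $2,4,6,\dots$ with nonnegative integer multiplicities, the equation $E = 2$ indeed has the unique solution $d_5(M) = 1$ and $d_{2j+1}(M) = 0$ for $j \geq 3$. The remainder is bookkeeping driven by the inequality $\mu \geq 2 d_2 + E$ together with the parity and divisibility observations.
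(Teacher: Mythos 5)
Your argument is correct and is essentially the paper's own proof: both apply Proposition~\ref{P:ELLIPTIC_INEQ} directly to $M$, combine $-\chi_\pi(M)\geq k$ with the identity \eqref{E:rat-ell-eq} (using Lemma~\ref{L:d_even} to reduce the even part to $d_2$), and arrive at the same inequality $\mu \geq 2d_2(M) + 2d_5(M) + \sum_{j\geq 3}2(j-1)d_{2j+1}(M)$ before running the identical case analysis with the mod-$3$ divisibility. Your ``excess'' $E$ is just a convenient repackaging of the same quantity; no gap.
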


% PROOF

\begin{proof}
Since $n = 3k + \mu$, $\mu \in \{0,1,2\}$, and, by Lemma \ref{L:d_even}, for even homotopy groups only $d_2(M)$ is possibly non-trivial, it follows from Proposition \ref{P:ELLIPTIC_INEQ} that
\begin{align*}
- 3\, d_2(M) + 3 \sum_{j = 1}^\infty d_{2j+1}(M) &= - 3 \chi_\pi(M) \\
&\geq 3k \\
&= n - \mu \\
&= - d_2(M) - \mu + \sum_{j = 1}^\infty (2j+1)\, d_{2j+1}(M).
\end{align*}
Hence, 
$$
\mu \geq 2(d_2(M) + d_5(M)) + \sum_{j = 3}^\infty 2(j-1) \, d_{2j+1}(M) \geq 0.
$$
Therefore, $d_2(M) = 0$ and $d_{2j+1}(M) = 0$, for all $j \geq 2$, whenever $\mu \in \{0,1\}$, while for $\mu = 2$ one has $(d_2(M), d_5(M)) \in \{(0,0), (0,1), (1,0)\}$ and $d_{2j+1}(M) = 0$, for all $j \geq 3$.  

By applying the inequality \eqref{E:rat-ell-eq} from Proposition \ref{P:ELLIPTIC_INEQ} once more, the result follows.  Indeed, when $\mu = 0$, one obtains $3k = n = 3 \, d_3(M)$, as desired.  When $\mu = 1$, it is clear that $3k + 1 = n = 3\, d_3(M)$ is impossible.  Finally, when $\mu = 2$, the identity $3k + 2 = n = 3\, d_3(M) + 5\, d_5(M) - d_2(M)$ precludes the case $(d_2(M), d_5(M)) = (0,0)$. 
\end{proof}

It remains to use Lemma \ref{L:RHG} to determine the minimal models, hence rational homotopy types, of $n$-manifolds admitting an almost-free action by a torus of rank $\left\lfloor \frac{n}{3} \right\rfloor$.  The more difficult case, namely, when $d_5(M) = 1$, will be ignored for the moment.

% LEMMA

\begin{lem}
Let $X$ be a (simply connected) rationally elliptic topological space.
\begin{itemize}
\item[(1)]If $d_3(X)=k$ and $d_j(X)=0$ for $j\neq 3$, then $X$ is rationally homotopy equivalent to $\prod^{k}_{i=1} \sph^3$.
\item[(2)]If $d_2(X)=1$, $d_3(X)=k+1$ and $d_j(X)=0$ for $j\neq 2,3$, then $X$ is rationally homotopy equivalent to $\sph^2\x\prod^{k}_{i=1} \sph^3$.
\end{itemize}
\end{lem}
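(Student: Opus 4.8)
The plan is to argue entirely at the level of minimal models. By the hypotheses on the $d_j(X)$, the graded vector space $V_X$ underlying the minimal model $(\wedge V_X, d_X)$ is completely determined, so in each case I would write down $\wedge V_X$ explicitly, use the degree and decomposability constraints on $d_X$ to pin down the differential, and then recognise the resulting model as that of the claimed product of spheres.

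For part (1), since $V_X$ is concentrated in degree $3$ with $\dim V^3 = k$, a choice of basis gives $\wedge V_X = \wedge(x_1,\dots,x_k)$ with $|x_i| = 3$. The nonzero homogeneous components of this exterior algebra sit only in degrees $0,3,6,\dots$, so $(\wedge V_X)^4 = 0$. As $d_X$ has degree $+1$, this forces $d_X(x_i) \in (\wedge V_X)^4 = 0$, and since $d_X$ is a derivation determined by its restriction to generators, $d_X = 0$. Thus $(\wedge(x_1,\dots,x_k),0)$ is exactly the minimal model of $\prod^k \sph^3$, giving $X \simeq_\Q \prod^k \sph^3$.

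For part (2), $V_X = V^2 \oplus V^3$ with $\dim V^2 = 1$ and $\dim V^3 = k+1$, so $\wedge V_X = \Q[y]\ox\wedge(x_0,\dots,x_k)$ with $|y|=2$ and $|x_i|=3$. Here I would exploit two facts: $d_X$ has degree $+1$, and $d_X$ is decomposable, i.e. $\im(d_X)\In \wedge^{\geq 2}V_X$. Since the lowest-degree decomposable element is $y^2$, in degree $4$, there is no decomposable element of degree $3$, forcing $d_X(y)=0$; and the decomposables of degree $4$ are spanned by $y^2$, so $d_X(x_i)=c_i\,y^2$ for scalars $c_i\in\Q$. If every $c_i$ vanished then $d_X=0$ and the cohomology would be all of $\Q[y]\ox\wedge(x_0,\dots,x_k)$, which is infinite-dimensional because of the polynomial factor $\Q[y]$ — contradicting rational ellipticity. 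Hence some $c_i\neq 0$, and a linear change of basis of $V^3$ produces generators $x_0',x_1',\dots,x_k'$ with $d_X(x_0')=y^2$ and $d_X(x_i')=0$ for $i\geq 1$. This exhibits the model as the tensor product of $(\Q[y]\ox\wedge(x_0'),\,d)$ — the minimal model of $\sph^2$, via the standard computation $H^*(\sph^2;\Q)=\Q[y]/(y^2)$ — with $(\wedge(x_1',\dots,x_k'),0)$, the minimal model of $\prod^k \sph^3$, yielding $X \simeq_\Q \sph^2 \x \prod^k \sph^3$.

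The only genuinely delicate point is the use of rational ellipticity in part (2) to guarantee $d_X\neq 0$: this is precisely what distinguishes $\sph^2$ from $\CP^\infty = K(\Z,2)$, which shares the degree-$2$ rational homotopy but has infinite-dimensional rational cohomology. Everything else reduces to a degree count in the free commutative graded algebra $\wedge V_X$, together with the observation that a derivation is determined by its values on generators.
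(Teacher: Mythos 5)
Your proposal is correct and follows essentially the same route as the paper: in both cases you write down the free commutative graded algebra determined by the $d_j(X)$, use the degree and decomposability constraints to show $d_X$ vanishes except possibly on the degree-$3$ generators mapping onto $\Span_\Q\{y^2\}$, invoke finite-dimensionality of $H^*(X;\Q)$ to rule out the trivial differential in part (2), and normalise by a linear change of basis to recognise the model of $\sph^2\x\prod^k\sph^3$. The argument that surjectivity of $d_X|_{V^3}$ onto $\Span_\Q\{y^2\}$ is forced by rational ellipticity is exactly the paper's, so there is nothing to add.
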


% PROOF

\begin{proof}
In the first case, by the discussion in Subsection \ref{SS:RHT}, the minimal model for $X$ is $(\wedge V_X, d_X)$, where $\wedge V_X = \wedge(x_1, \dots, x_k)$ is the exterior algebra on $k$ elements $x_i$, where $\deg(x_i) = 3$ for all $i = 1, \dots, k$.  Moreover, the differential is trivial, i.e. $d_X (x_i) = 0$ for all $i = 1, \dots, k$, since $\wedge V_X$ has no elements of degree $4$.   Hence, $(\wedge V_X, d_X)$ is precisely the minimal model of $\prod^{k}_{i=1} \sph^3$.

In the second case, the free commutative graded algebra for $X$ is $\wedge V_X = \wedge(u, x_0, \dots, x_k)$, where $\deg(u) = 2$ and $\deg(x_i) = 3$ for all $i = 0, \dots, k$.  Since the differential $d_X$ is decomposable, it follows that $d_X(u) = 0$.  In order to determine $d_X$, the image of 
$$
d_X|_{V^3} : \Span_\Q \{x_0, \dots, x_k\} = \Q^{k+1} \to \Span_\Q\{u^2\} = \Q
$$ 
must be identified.  If the image were trivial, this would imply that, for all $l \in \N$, $H^{2l}(\wedge V_X, d_X) = H^{2l}(X; \Q)$ is non-trivial, contradicting the rational ellipticity assumption.   Because $d_X|_{V^3}$ is linear, it must therefore be surjective.  By a change of basis, it may thus be assumed without loss of generality that $d_X(x_0) = u^2$ and $d_X(x_i) = 0$ for all $i = 1, \dots, k$.  As a consequence,
$$
(\wedge V_X, d_X) = (\wedge(u, x_0), d u = 0, d x_0 = u^2) \ox (\wedge(x_1, \dots, x_k), d x_i =0)
$$
which is the minimal model of $\sph^2 \x \prod^{k}_{i=1} \sph^3$, as desired.
\end{proof}

Now, the case where $n = 3k +2$, $d_3(M) = k-1$, $d_5(M) = 1$ and $d_j(M) = 0$, for $j \neq 3,5$, will follow as a corollary of the general recognition lemma below.

% LEM: RECOGNITION PROD OF ODD DIM SPHERES

\begin{lem}
\label{L:ODD_DIM} Let $X$ be a compact, (simply connected) rationally elliptic topological space such that $d_{2j}(X) = \pi_{2j}(X) \ox \Q=0$, for all $j\geq 1$. If a torus $T^k$ acts almost freely on $X$ and $k=-\chi_\pi(X)$, then $X$ is rationally homotopy equivalent to a product of odd-dimensional spheres.
\end{lem}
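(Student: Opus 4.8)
The plan is to pass to the Borel construction $X_T$, to use the equality $k = -\chi_\pi(X)$ to show that $X_T$ is positively elliptic, and then to recover the (necessarily trivial) differential of the minimal model of $X$ by viewing $X$ as the fibre of the Borel fibration $X \to X_T \to BT^k$. First I would record the shape of the minimal model of $X$: since $d_{2j}(X) = 0$ for all $j \geq 1$, the space $V_X$ is concentrated in odd degrees, so $\wedge V_X$ is an exterior algebra and $-\chi_\pi(X) = \sum_{j} d_{2j+1}(X) = \dim_\Q V_X$. Thus the hypothesis reads $\dim_\Q V_X = k$, and the goal becomes precisely to show that $d_X = 0$: as the minimal model is unique up to isomorphism, $X$ is rationally homotopy equivalent to a product of odd-dimensional spheres if and only if the differential on $V_X$ vanishes.

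Next I would analyse $X_T$. By Lemma~\ref{L:Borel_RE} it is rationally elliptic, and the long exact homotopy sequence of $T^k \to X \to X_T$ gives $d_2(X_T) = d_2(X) + k = k$ together with $d_j(X_T) = d_j(X)$ for all $j \geq 3$; in particular $d_{2j}(X_T) = 0$ for $j \geq 2$. Hence
\[
\chi_\pi(X_T) = d_2(X_T) - \sum_{j \geq 1} d_{2j+1}(X_T) = k - (-\chi_\pi(X)) = 0.
\]
By the structure theory of elliptic spaces (\cite{FHT}, Chap.~32), a rationally elliptic space with vanishing homotopy Euler characteristic has cohomology concentrated in even degrees and is a positively elliptic, formal space whose rational cohomology is a complete intersection. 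Moreover the only even rational homotopy of $X_T$ lives in degree $2$, and the homotopy sequence identifies it with $H^2(BT^k) = \Span_\Q\{u_1, \dots, u_k\}$. Consequently
\[
H^*(X_T; \Q) \cong \Q[u_1, \dots, u_k]/(\sigma_1, \dots, \sigma_k),
\]
a complete intersection on a regular sequence $\{\sigma_j\}$ with $\deg \sigma_j = \deg \xi_j + 1$, and the minimal model of $X_T$ is the \emph{pure} model $(\Q[u_1, \dots, u_k] \ox \wedge(\xi_1, \dots, \xi_k),\, D)$ with $D u_i = 0$ and $D\xi_j = \sigma_j \in \Q[u_1, \dots, u_k]^+$.

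Finally I would extract the minimal model of $X$ from the Borel fibration $X \to X_T \to BT^k$. Because the pure model above already has the form $(\Q[u_1, \dots, u_k] \ox \wedge(\xi_j),\, D)$ in which $\Q[u_1, \dots, u_k]$ is the image of the minimal model of the base $BT^k$, it is a relative minimal model for $X_T$ over $BT^k$; quotienting by the positive-degree part $\Q[u_1, \dots, u_k]^+$ then yields the minimal model of the fibre $X$. Since each $\sigma_j$ lies in $\Q[u_1, \dots, u_k]^+$, the induced differential sends $\xi_j \mapsto \sigma_j|_{u = 0} = 0$. Therefore $d_X = 0$, and $X \simeq_\Q \prod_j \sph^{\deg \xi_j}$ is a product of odd-dimensional spheres.

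The main obstacle is the third step: identifying the minimal model of $X_T$ as pure. The equality $k = -\chi_\pi(X)$ is exactly what forces $\chi_\pi(X_T) = 0$, and without it $X_T$ need not be positively elliptic and the conclusion genuinely fails; for instance $(\wedge(x_1, x_2, y), d y = x_1 x_2)$ with $\deg x_i = 3$ and $\deg y = 5$ has $-\chi_\pi = 3$ yet is not a product of spheres, and correspondingly admits no almost-free $T^3$-action. Care is also needed to verify that the even generators of $X_T$ are precisely the degree-two classes pulled back from $BT^k$, so that passing to $u = 0$ really recovers the fibre $X$ and not some other space.
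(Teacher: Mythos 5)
Your proposal follows the paper's strategy for its first two-thirds---reduce the claim to $d_X = 0$, pass to the Borel construction, compute $\chi_\pi(X_T) = 0$ from $k = -\chi_\pi(X)$, and invoke \cite[Prop.\ 32.10]{FHT} to obtain a pure Sullivan model for $X_T$---but diverges at the decisive step. The paper stays inside the relative minimal model $(\Q[x_1,\dots,x_k]\ox\wedge V_X, D)$ of the Borel fibration and proves, by induction on degree using only the abstract isomorphism $\Phi$ onto the pure model, that $D(V_X)$ lies in the ideal $\Q^+[x_1,\dots,x_k]\ox\wedge V_X$, which forces $d_X=0$. You instead claim that the pure model $(\Q[u_1,\dots,u_k]\ox\wedge(\xi_1,\dots,\xi_k), D)$ is itself a relative Sullivan model for $X_T$ \emph{over} $BT^k$ and read off the fibre by setting $u=0$. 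This is shorter, but the claim you need is essentially the content of the lemma and you assert it rather than prove it: a priori the isomorphism between the pure model and the relative minimal model of the fibration is only an isomorphism of Sullivan algebras, and nothing guarantees it respects the base, i.e.\ that the odd generators $\xi_j$ correspond to fibre generators modulo the ideal $(\Q^+[u])$ rather than modulo something larger. The paper's induction is exactly the verification that it does.

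That said, your step is fillable by standard machinery, so this is a deferred verification rather than a wrong turn. From the long exact homotopy sequence (using $\pi_1(X)=0$ and $\pi_2(X)\ox\Q=0$) one gets that $p^*:H^2(BT^k;\Q)\to H^2(X_T;\Q)$ is an isomorphism; since the degree-two part of a minimal model consists entirely of cocycle generators, any Sullivan representative of $p$ carries $(\Q[v_1,\dots,v_k],0)$ isomorphically onto $(\Q[u_1,\dots,u_k],0)$ inside the pure model, and since maps out of $(\Q[v_1,\dots,v_k],0)$ are classified up to homotopy by their effect on $H^2$, the square of models commutes up to homotopy. After rectifying, \cite[Thm.\ 15.3]{FHT} identifies $(\wedge(\xi_1,\dots,\xi_k),\bar D)=(\wedge(\xi_1,\dots,\xi_k),0)$ as a model for the fibre $X$. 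With this spelled out (you only gesture at it in your closing paragraph), your argument is a complete and genuinely different proof; as written, the pivotal identification is left unproven.
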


% PROOF

% NOTATION

% d_T  - differential in the Borel construction minimal model/
% P_i   - polynomial. 

\begin{proof} Let $(\wedge V_X,d_X)$ be a minimal model for $X$, so that $V_X^{2i}=0$, for all $i\in \N$. Notice that, since $\chi_\pi(X) = -k$, it follows from \cite[Thm.\ 15.11]{FHT} that $\dim_\Q(V_X) = k$.  To prove the lemma, it suffices to show that the differential $d_X$ is the zero map.

By Lemma~\ref{L:Borel_RE}, the Borel construction $X_T$ is rationally elliptic.  The relative minimal model of $X_T$ corresponding to the bundle
$$
X \to X_T \to BT^k
$$
is $(\Q[x_1,\ldots,x_k] \ox \wedge V_X,D)$, where $\deg(x_i) = 2$, for all $i = 1, \dots, k$.  The differential $D$ satisfies $D(x_i)=0$, for all $i = 1, \dots, k$, and $D(v) - d_X(v) \in \Q^+[x_1, \dots, x_k] \ox \wedge V_X$, for all $v \in V_X$.
Thus, it need only be shown that the image of $D|_{V_X}$ lies in $\Q^+[x_1, \dots, x_k] \ox \wedge V_X$, i.e. in the ideal generated by $x_1,\ldots, x_k$. 

Let $\ol V = \Span_\Q \{x_1, \dots, x_k\} \oplus V_X$, so that
\[
\Q[x_1,\ldots,x_k] \ox \wedge V_X = \wedge \ol V.
\]
Note, in particular, that $(\wedge \ol V, D)$ is a minimal model for $X_T$, since $\im(D) \In \wedge^{\geq 2} \ol V$ as a result of $(\wedge V_X, d_X)$ being minimal and all elements of $V_X$ being of degree $\geq 3 > 2 = \deg(x_i)$.

By the minimality of $(\wedge \ol V, D)$, $\dim_\Q(\pi_j(X_T) \ox \Q) = \dim_\Q(\ol V^j)$ (see \cite[Thm.\ 15.11]{FHT}).  Therefore,
\begin{align*}
	\chi_\pi(X_T)	&= \dim_\Q (\ol V^{\mathrm{even}}) - \dim_\Q (\ol V^{\mathrm{odd}}) \\
				&= \dim_\Q (\Span_\Q \{x_1, \dots, x_k\}) - \dim_\Q (V_X) \\
				&= k - k \\
				&= 0.
\end{align*}

It now follows from \cite[Prop.\ 32.10]{FHT} that $(\wedge \ol V, D)$ is a \emph{pure} Sullivan algebra, i.e.~there is a differential-preserving isomorphism
$$
\Phi : (\wedge \ol V, D) \to (\wedge(U \oplus W),d),
$$
where $U = U^{\mathrm{odd}}$, $W = W^{\mathrm{even}}$, $d(W) = \{0\}$ and $d(U) \In \wedge W$.  The isomorphism $\Phi$ induces a linear isomorphism
$$
\vphi : \ol V \to U \oplus W
$$
of graded vector spaces, such that, for every $\ol v \in \ol V$, 
$$
\Phi(\ol v) - \vphi(\ol v) \in \wedge^{\geq 2}(U \oplus W).
$$

The proof that $D(V_X) \In \Q^+[x_1, \dots, x_k] \ox \wedge V_X$ will be done by induction on degree.
First, since there are no non-trivial elements of degree $< 4$ in $\wedge^{\geq 2} \ol V$, it follows that $\Phi(\ol v) = \vphi(\ol v)$ whenever $\ol v \in \ol V$ with $\deg(\ol v) \leq 3$.  Therefore, the maps
\begin{align*}
\Phi|_{\ol V^2} = \vphi|_{\ol V^2} &: \ol V^2 = \Span_\Q \{x_1, \dots, x_k\} \to W   \\
\text{ and } \ \Phi|_{\ol V^3} = \vphi|_{\ol V^3} &: \ol V^3 = V_X^3 \to U^3
\end{align*}
are isomorphisms.  Hence, for any $v \in V_X^3 = \ol V^3$, one has $\Phi(v) \in U^3$ and, consequently, $\Phi(D(v)) = d(\Phi(v)) \in \wedge W = \Phi(\Q[x_1, \dots, x_k])$.  As $\Phi$ is injective, this implies that $D(v) \in \Q^+[x_1, \dots, x_k] \In \Q^+[x_1, \dots, x_k] \ox \wedge V_X$, as desired.

Suppose now that $D(V_X^{\leq 2j-1}) \In \Q^+[x_1, \dots, x_k] \ox \wedge V_X$.  Let $v \in V_X^{2j+1}$.  Then there is some $y \in \wedge^{\geq 2} (U \oplus W)$ such that $\Phi(v) = \vphi(v) + y$.  Since $\Phi$ is surjective, there is a $\ol y \in \wedge^{\geq 2} \ol V$ such that $\Phi(\ol y) = y$.  Therefore, $\Phi(v - \ol y) = \vphi(v) \in U$ and, as a result, 
$$
\Phi(D(v-\ol y)) = d(\Phi(v - \ol y)) \in d(U) \In \wedge W= \Phi(\Q[x_1, \dots, x_k]).
$$
By the injectivity of $\Phi$, this implies that $D(v) - D(\ol y) \in \Q[x_1, \dots, x_k]$.  However, since $\ol y \in \wedge^{\leq 2} \ol V$ is a linear combination of products of elements of degree $\leq 2j-1$, the induction hypothesis ensures that $D(v) \in \Q^+[x_1, \dots, x_k] \ox \wedge V_X$.

Hence, by induction, $\im(D|_{V_X}) \In \Q^+[x_1, \dots, x_k] \ox \wedge V_X$, as desired.
\end{proof}

%%%%%%%%%%%%%%%%%%%%%%%%%%%%%%%%%%%

% COROLLARY

\begin{cor}
Let $M^n$, $n = 3k + 2$, be a smooth, closed, (simply connected) rationally elliptic, $n$-dimensional manifold on which the torus $T^k$ acts almost freely.  Suppose further that $d_3(X) = k-1$, $d_5(X) = 1$ and $d_j(X) = 0$ for all $j \neq 3, 5$.  Then $M^n$ is rationally homotopy equivalent to $\sph^5 \x \prod^{k-1}_{i=1} \sph^3$.
\end{cor}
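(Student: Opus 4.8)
The plan is to recognise this statement as essentially an immediate consequence of the recognition lemma (Lemma \ref{L:ODD_DIM}), followed by a short bookkeeping step that reads off the precise product of spheres from the rational homotopy groups.

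First I would verify that $M^n$ satisfies the hypotheses of Lemma \ref{L:ODD_DIM}. By assumption $M^n$ is compact, simply connected and rationally elliptic, and $T^k$ acts almost freely on it. Since $d_j(M) = 0$ for all $j \neq 3,5$ and both $3$ and $5$ are odd, every even rational homotopy group vanishes, i.e.\ $d_{2j}(M) = 0$ for all $j \geq 1$. It then remains only to check the Euler-characteristic condition, which follows directly from the definition of $\chi_\pi$:
$$
\chi_\pi(M) = -d_3(M) - d_5(M) = -(k-1) - 1 = -k,
$$
so that $k = -\chi_\pi(M)$, exactly as required.

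With the hypotheses in place, Lemma \ref{L:ODD_DIM} applies and yields that $M^n$ is rationally homotopy equivalent to a product of odd-dimensional spheres. The final step is to determine which product. As established in the proof of Lemma \ref{L:ODD_DIM}, the minimal model of such a product has trivial differential with a single generator in the degree of each sphere factor, so the number of factors of dimension $j$ is precisely $d_j(M)$. Reading off the given data---namely $d_3(M) = k-1$, $d_5(M) = 1$, and $d_j(M)=0$ otherwise---forces exactly $k-1$ factors of $\sph^3$ and one factor of $\sph^5$, whence $M^n \simeq_\Q \sph^5 \x \prod^{k-1}_{i=1} \sph^3$. As a consistency check, the cohomological dimension of this model is $5 + 3(k-1) = 3k+2 = n$, matching $\dim M^n$.

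I do not expect any genuine obstacle here: the entire difficulty has already been absorbed into Lemma \ref{L:ODD_DIM}, which does the work of showing that the differential of the minimal model vanishes. The only point requiring any care is the final identification, where one must observe that, once $M^n$ is known to be rationally equivalent to a product of odd spheres, its rational homotopy groups pin down that product uniquely up to a reordering of the factors.
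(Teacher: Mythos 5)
Your proposal is correct and follows exactly the paper's route: the paper's proof of this corollary is a one-line appeal to Lemma \ref{L:ODD_DIM}, and you have simply made explicit the verification of its hypotheses (in particular that $\chi_\pi(M) = -k$) and the final identification of the sphere factors from the $d_j(M)$. No issues.
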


% PROOF

\begin{proof}
The rational homotopy type follows immediately from Lemma \ref{L:ODD_DIM}.
\end{proof}

% REMARK

\begin{rem}
Observe that the smoothness assumptions played no role in the arguments in this section.  Therefore, the classification obtained in Theorem \ref{T:RIGIDITY}\eqref{L:AF} holds, more generally, for compact, rationally elliptic topological spaces of formal dimension $n$ which admit a maximal-rank, almost-free torus action. 

The question of whether one gets a classification up to equivariant rational homotopy equivalence in the case of almost-free torus actions of maximal rank is still open.  The main obstacle seems to be the abundance of maximal-rank, almost-free torus actions on $\prod S^3$.
\end{rem}

%%%%%%%%%%%%%%%%%%%%%%%%%%%%%%%%%%%%%%%%%%%%%%%%%

% SECTION: PROOF OF THM B - PART (2)
%---------------------------------------------------------
%---------------------------------------------------------

\section{Maximal effective actions}
\label{S:MaxEff}

It turns out that effective torus actions of maximal rank are special cases of a more general type of action, namely slice-maximal actions, as defined in \cite{GKRW} (see also \cite{Is, Us}):  A smooth, effective action of the torus $T^k$ on a smooth, closed $n$-manifold $M^n$ is called \emph{slice maximal} if $n=k+s$, where $s$ is the maximal dimension of an isotropy subgroup.  

% LEMMA

\begin{lem}
\label{L:SliceMax}
Let $M^n$ be a smooth, closed, (simply connected) rationally elliptic, $n$-dimensional manifold which admits a smooth, effective action of the torus $T^k$ of rank $k=\lfloor\frac{2n}{3}\rfloor$.  Then the $T^k$ action is slice maximal.

Moreover, if $n \not\equiv 1 \mod 3$, there is a rank-$\lfloor\frac{n}{3}\rfloor$ subtorus of $T^k$ acting almost freely on $M^n$, while if $n \equiv 1 \mod 3$, there is an almost-free action by a subtorus of rank $\lfloor\frac{n}{3}\rfloor - 1$.
\end{lem}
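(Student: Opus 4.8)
The plan is to extract slice maximality directly from the two elementary inequalities already established in the proof of the effective bound in Section~\ref{S:PROOF_THM_A}. Recall from there that, writing $s$ for the maximal dimension of an isotropy subgroup, one has: (i) a subtorus $T^{k-s}\In T^k$ acting almost freely, whence by the almost-free bound $k-s\leq\lfloor\frac{n}{3}\rfloor$, i.e.\ $3(k-s)\leq n$; and (ii) the slice representation of the identity component $T^s$ of the isotropy at a maximal-isotropy point, acting linearly and effectively on the normal space of dimension $n-k+s$, forcing $2s\leq n-k+s$, i.e.\ $s\leq n-k$. Since slice maximality means precisely $n=k+s$, and (ii) already gives $s\leq n-k$, the entire statement reduces to proving the reverse inequality $s\geq n-k$.

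First I would split according to the residue of $n$ modulo $3$, writing $n=3q+\mu$ with $\mu\in\{0,1,2\}$ and computing $k=\lfloor\frac{2n}{3}\rfloor$: one finds $k=2q$ when $\mu\in\{0,1\}$ and $k=2q+1$ when $\mu=2$. Combining the two inequalities in the form $s\geq k-\lfloor\frac{n}{3}\rfloor=k-q$ and $s\leq n-k$ then pins $s$ down. For $\mu=0$ one gets $q\leq s\leq q=n-k$, and for $\mu=2$ one gets $q+1\leq s\leq q+1=n-k$; in both cases $s=n-k$ and the action is slice maximal, with no further input needed.

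The only case in which the arithmetic alone is insufficient is $\mu=1$, i.e.\ $n\equiv 1\mod 3$, which I expect to be the crux. Here the inequalities yield only $q\leq s\leq q+1$, so I must rule out $s=q$. This is exactly where rational ellipticity re-enters, through Lemma~\ref{L:RHG}: if $s=q$, then $k-s=2q-q=q=\lfloor\frac{n}{3}\rfloor$, so the almost-free subtorus $T^{k-s}$ of step (i) would be a torus of \emph{maximal} rank $\lfloor\frac{n}{3}\rfloor$ acting smoothly and almost freely on the rationally elliptic manifold $M^n$. But Lemma~\ref{L:RHG} asserts that such an action forces $n\not\equiv 1\mod 3$, contradicting $\mu=1$. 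Hence $s=q+1=n-k$, and the action is again slice maximal.

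Finally, for the ``moreover'' statement I would simply read off the rank of the almost-free subtorus from the now-established identity $s=n-k$. The subtorus $T^{k-s}$ produced in step (i) has rank $k-s=2k-n$, and a direct computation gives $2k-n=q=\lfloor\frac{n}{3}\rfloor$ when $\mu\in\{0,2\}$ and $2k-n=q-1=\lfloor\frac{n}{3}\rfloor-1$ when $\mu=1$. Thus this same subtorus realises the asserted almost-free action of rank $\lfloor\frac{n}{3}\rfloor$ (respectively $\lfloor\frac{n}{3}\rfloor-1$), which completes the argument. The single nontrivial ingredient is the appeal to Lemma~\ref{L:RHG} in the residue-one case; everywhere else the conclusion is forced by the two rank inequalities alone.
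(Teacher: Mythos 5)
Your proof is correct and follows essentially the same route as the paper's: both rest on the two inequalities $3(k-s)\leq n$ and $s\leq n-k$ from Section~\ref{S:PROOF_THM_A}, and both invoke Lemma~\ref{L:RHG} to exclude an almost-free action of rank $\lfloor\frac{n}{3}\rfloor$ precisely in the residue-one case. The only difference is organizational: you argue directly by trapping $s$ between the two bounds in each residue class, whereas the paper assumes $n>k+s$ and derives a contradiction via $k\leq 2s<k+a$.
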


% PROOF

\begin{proof}
Let $s > 0$ be the maximal dimension of an isotropy subgroup of the $T^k$ action and let $p \in M^n$ be such that the isotropy subgroup $T_p$ at $p$ has dimension $s$.  It is known from the arguments in Section \ref{S:PROOF_THM_A} used to prove Theorem \ref{T:RANK_BOUND} that $k + s \leq n$ and that there is a subtorus of rank $k-s$ acting almost freely on $M^n$, hence $3(k-s) \leq n$.  By hypothesis, there is some $a \in \{0,1,2\}$ such that $2n = 3k + a$.  

Suppose that $n > k+s$.  Then $a \in \{1,2\}$, since
$$
n \geq 3(k-s) > 3k - 3(n-k) = 6k - 3n
$$ 
implies $2n > 3k$.  Now, from $3(k-s) \leq n$, one observes that $6s \geq 6k - 2n = 3k - a$, which in turn yields $2s \geq k$, since $6s$ is divisible by $3$ and $a \in \{1,2\}$.

On the other hand,
$$
2s < 2(n-k) = (3k + a) - 2k = k + a,
$$
from which one concludes that $k \leq 2s < k + a$.  

If $a=1$, then $k = 2s$ and, hence, $2n = 6s +1$, which is impossible.  If $a = 2$, then $k$ is even, as $2n = 3k + 2$.  Therefore $k = 2s$, $n = 3s + 1$ and $k-s = s = \lfloor\frac{n}{3}\rfloor$, which contradicts Lemma \ref{L:RHG}, i.e. if $n \equiv 1 \mod 3$, then $M^n$ cannot admit an almost-free action of rank $\lfloor\frac{n}{3}\rfloor$.  It thus follows that $n = k + s$, hence that the $T^k$ action is slice maximal, as desired.

The identities $n = k+s$ and $2n = 3k + a$ yield $k = 2s - a$, hence $n = 3s - a$ and $k-s = s - a$.  By considering each $a \in \{0,1,2\}$ in turn, the remaining statements follow easily.
\end{proof}

In \cite{GKRW} rationally elliptic manifolds admitting slice-maximal torus actions have been classified up to equivariant rational homotopy equivalence, which allows the proof of Theorem \ref{T:RIGIDITY} to be completed.  Indeed, it was shown that if $M^n$ admits a slice-maximal $T^k$ action, it must then be ($T^k$-equivariantly) rationally homotopy equivalent to the quotient $M'$ of a product of spheres $\prod_i \sph^{n_i}$, $n_i \geq 3$, by a free, linear $T^l$ action.  The long exact sequence of homotopy groups for the principal bundle $T^l \to \prod_i \sph^{n_i} \to M'$ yields $d_2(M) = l$ and $d_j(M) = d_j (\prod_i \sph^{n_i})$, for all $j \geq 3$. Because $d_j(\sph^k)$ is nonzero (in fact, equal to $1$) only for $j=k$ and, when $k$ is even, for $j=2k-1$, the numbers $d_j(M)$ completely determine the dimensions of the spherical factors in $\prod_i \sph^{n_i}$.

% THM

\begin{thm}
\label{T:ThmB2}
Let $M^n$, $n \geq 3$, be an $n$-dimensional, smooth, closed, (simply connected) rationally elliptic manifold equipped with a smooth, effective action of the torus $T^k$ of rank $\left\lfloor\frac{2n}{3}\right\rfloor$.  Then $M^n$ is $T^k$-equivariantly rationally homotopy equivalent to a manifold of one of the following forms:
\begin{enumerate}
\item $X \x \prod \sph^3$,  with $X \in \{\sph^3, \sph^4, \sph^5, \sph^7, \sph^5 \x \sph^5\}$; \vspace*{1mm}
\item $(Y \x \prod \sph^3)/\sph^1$,  with $Y \in \{\sph^3, \sph^5\}$; or \vspace*{1mm}
\item $(\prod \sph^3)/T^2$.
\end{enumerate}
\end{thm}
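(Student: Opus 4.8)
The plan is to combine Lemma~\ref{L:SliceMax} with the classification of \cite{GKRW} in order to reduce the theorem to a finite arithmetic enumeration. By Lemma~\ref{L:SliceMax}, the $T^k$ action is slice maximal and $M^n$ carries an almost-free action of a subtorus of rank $r$; writing $a = 2n - 3k \in \{0,1,2\}$, the identities $n = 3s-a$ and $k-s = s-a$ from the proof of that lemma give $r = s-a = (n-2a)/3$ in all three residue classes. By \cite{GKRW} there is a $T^k$-equivariant rational homotopy equivalence $M^n \simeq_\Q M'$, where $M' = \left(\prod_i \sph^{n_i}\right)/T^l$ is the quotient of a product of spheres of dimension $n_i \geq 3$ by a free linear $T^l$ action, with $l = d_2(M)$ and $d_j(M) = d_j(\prod_i \sph^{n_i})$ for all $j \geq 3$. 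The asserted $T^k$-equivariance is thus inherited directly from \cite{GKRW}, and it remains only to determine which products and which $l$ can occur.

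Next I would record the relevant numerical invariants. Let $p$ be the number of odd-dimensional spherical factors and $q$ the number of even-dimensional ones, and write $\sum(\text{odd } n_i) = 3p + A$ and $\sum(\text{even } n_i) = 4q + B$, where $A = \sum(n_i - 3)$ and $B = \sum(n_i - 4)$ are non-negative even integers recording the excess of each factor over $\sph^3$, resp.\ $\sph^4$. Since each even sphere $\sph^{n_i}$ contributes rational homotopy in the two degrees $n_i$ and $2n_i - 1$ (one even, one odd) while each odd sphere contributes a single odd class, a direct computation of the homotopy Euler characteristic gives $\chi_\pi(M') = l - p$; as $\chi_\pi$ is a rational homotopy invariant, $-\chi_\pi(M) = p - l$. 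Finally the dimension count reads $n = \dim M' = \sum n_i - l = 3p + A + 4q + B - l$.

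The heart of the argument is then a single inequality. Applying the almost-free bound \eqref{E:rat-ell-ineq-torus} of Proposition~\ref{P:ELLIPTIC_INEQ} to the rank-$r$ subtorus yields $r = (n-2a)/3 \leq -\chi_\pi(M) = p - l$. Substituting the dimension formula for $n$ and simplifying, the terms $3p$ cancel and one obtains the master inequality
\[
A + 4q + B + 2l \leq 2a \leq 4.
\]
Since $A, B$ are non-negative even integers and $q, l \geq 0$, only finitely many possibilities remain, which I would enumerate directly: necessarily $q \leq 1$, with $q = 1$ forcing $A = B = l = 0$ (one $\sph^4$ factor, all others $\sph^3$), while $q = 0$ (so $B=0$) leaves $(l,A) \in \{(0,0),(0,2),(0,4),(1,0),(1,2),(2,0)\}$. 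Translating each excess back into sphere dimensions --- $A=0$ means all odd factors are $\sph^3$, $A=2$ a single $\sph^5$, and $A=4$ either a single $\sph^7$ or two copies of $\sph^5$ --- produces exactly the three families listed in the statement, with $X \in \{\sph^3,\sph^4,\sph^5,\sph^7,\sph^5\x\sph^5\}$ when $l=0$, with $Y \in \{\sph^3,\sph^5\}$ when $l=1$, and $(\prod\sph^3)/T^2$ when $l=2$.

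The main obstacle I anticipate is not the enumeration, which is routine, but correctly assembling the inputs: verifying that the almost-free rank furnished by Lemma~\ref{L:SliceMax} is precisely $(n-2a)/3$ in each residue class, and invoking \cite{GKRW} with the bookkeeping that relates $l$ and the numbers $d_j(M)$ to the spherical factors of $M'$. Once these are in place, the cancellation producing $A + 4q + B + 2l \leq 2a$ is the decisive step, and the rest is a finite check.
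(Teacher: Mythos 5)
Your proposal is correct, and the way you carry out the enumeration is genuinely different from the paper's. Both arguments rest on the same structural inputs: Lemma~\ref{L:SliceMax} (slice-maximality together with an almost-free subtorus of rank $k-s = s-a = (n-2a)/3$) and the classification of \cite{GKRW}, which supplies the model $M' = \left(\prod_i \sph^{n_i}\right)/T^l$ with $l = d_2(M)$ and the $T^k$-equivariance. Where you diverge is in deciding which pairs $\left(\prod_i\sph^{n_i}, l\right)$ can occur. The paper splits on the residue of $n$ modulo $3$: for $n \not\equiv 1$ it quotes the almost-free maximal-rank classification of Section~\ref{S:PROOF_THM_B-(1)} (part (1) of Theorem~\ref{T:RIGIDITY}), and for $n \equiv 1$ it reruns the analysis of Lemmas~\ref{L:d_even} and~\ref{L:RHG} with $\mu = 4$, using \eqref{E:rat-ell-ineq-even} and \eqref{E:rat-ell-eq} to enumerate the tuples $(d_2,d_4,d_5,d_7)$ and discarding $(0,1,2,0)$ by hand because a $\sph^4$ factor forces $d_7 \neq 0$. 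You instead apply the single inequality \eqref{E:rat-ell-ineq-torus} to the almost-free subtorus, combine it with $\chi_\pi(M) = l - p$ and the dimension count $n = 3p + A + 4q + B - l$, and arrive at the uniform bound $A + 4q + B + 2l \leq 2a \leq 4$, whose finite solution set is exactly the stated list. This is shorter, avoids the residue-class case split entirely, and --- because you parametrise directly by spherical factors rather than by the $d_j$ --- the paper's excluded tuple never arises. What the paper's route buys is that for $n \not\equiv 1 \bmod 3$ the rational homotopy type is pinned down by the almost-free analysis alone, which is needed anyway for part (1) of Theorem~\ref{T:RIGIDITY}; your argument leans on \cite{GKRW} in every residue class, but since the equivariance claim requires \cite{GKRW} regardless, nothing is lost. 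I checked the arithmetic: $r = (n-2a)/3$ in all three cases, $\chi_\pi(M') = (l+q) - (p+q) = l - p$, and the cancellation of $3p$ in the master inequality are all correct, as is the final translation of $(q,l,A,B)$ back into the three families of the statement.
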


% PROOF

\begin{proof}
 When $n \not\equiv 1 \mod 3$, the possible rational homotopy types are given by Theorem \ref{T:RIGIDITY}\eqref{L:AF}, established in Section \ref{S:PROOF_THM_B-(1)}, due to  the existence of an almost-free action by a subtorus of rank $\left\lfloor\frac{n}{3}\right\rfloor$.  Note, in particular, that $\sph^2 \x \prod \sph^3 \simeq_\Q (\prod \sph^3)/\sph^1$ for every free, linear $\sph^1$ action on $\prod \sph^3$.

Suppose now that $n \equiv 1 \mod 3$.  By the discussion above, in order to determine the possible rational homotopy types, it suffices to determine the possible dimensions $d_j(M)$ of all rational homotopy groups.

Let $n = 3l+1$, $l \geq 1$, and let $s>0$ be the maximal dimension of an isotropy subgroup.  Then $k = 2l$ and, by Lemma \ref{L:SliceMax}, $k-s = l-1$.  Hence $l = s-1$, and $n$ and $k$ can be rewritten as $n = 3(k-s) + 4 = 3s - 2$ and $k = 2(s-1)$, respectively.  By repeating the analysis in the proof of Lemma \ref{L:d_even} (with $\mu = 4$ and $k$ replaced by $k-s$), one obtains 
\[
4 \geq \sum_{j = 1}^\infty (2j)\, d_{2j} (M) ,
\]
from which it immediately follows that 
\[
(d_2(M), d_4(M)) \in \{(0,0), (1,0), (2,0),(0,1)\}
\] 
and $d_{2j} = 0$, for all $j \geq 3$.  Similarly, by repeating the arguments in the proof of Lemma \ref{L:RHG}, one obtains
\[
4 \geq 2\, d_2(M) + \sum_{j = 2}^\infty 2(j-1) \, d_{2j+1}(M),
\]
hence $d_{2j+1}(M) = 0$, for all $j \geq 4$, and 
\[
4 \geq 2(d_2(M) + d_5(M)) + 4\, d_7(M).
\]
This inequality, together with the identity $n = 3\, d_3(M) + 5\, d_5(M) + 7\, d_7(M) - d_2(M) - 3 \, d_4(M)$ from Proposition \ref{P:ELLIPTIC_INEQ}, yields that the only possibilities are
\[
(d_2(M), d_4(M), d_5(M), d_7(M)) \in \left\{
\begin{array}{ccc}
(1,0,1,0), & (0,0,2,0), & (0,1,2,0) \\
(2,0,0,0),& (0,0,0,1), & (0,1,0,1)
\end{array}
\right\} .
\]
Observe that $(d_2(M), d_4(M), d_5(M), d_7(M)) = (0,1,2,0)$ cannot occur, since $d_4(M) \neq 0$ requires $d_7(M) \neq 0$.  Finally, in each remaining case it is easy to determine $d_3(M)$ and, consequently, $M^n$ is rationally homotopy equivalent to one of the following manifolds:
\[
\begin{array}{c|c}
 & \\[-2mm]
(d_2(M), d_4(M), d_5(M), d_7(M)) & M^n \simeq_\Q \\[2mm] \hline
& \\[-3mm]
(0,1,0,1) & \sph^4 \x \prod_{i=1}^{s-2} \sph^3 \\[2mm] 
(0,0,0,1) &  \sph^7 \x \prod_{i=1}^{s-3} \sph^3 \\[2mm] 
(0,0,2,0) &  \sph^5 \x \sph^5 \x \prod_{i=1}^{s-4} \sph^3 \\[2mm]
(1,0,1,0) &  (\sph^5 \x \prod_{i=1}^{s-2} \sph^3)/\sph^1 \\[2mm]
(2,0,0,0) &  (\prod_{i=1}^{s} \sph^3)/T^2\\[2mm] 
\end{array}
\]

The $T^k$-equivariance comes directly from \cite{GKRW}.
\end{proof}

It remains only to show that the manifolds arising in Theorem \ref{T:ThmB2} fall into only finitely many rational homotopy types.  The more difficult case of $(\prod_{i=1}^{s} \sph^3)/T^2$ will be postponed until Section \ref{S:Biquotients}.

% PROPOSITION

\begin{prop}
Suppose $\sph^1$ acts freely and linearly on $\sph^5 \x \prod_{i=1}^{m} \sph^3$.  Then the quotient $(\sph^5 \x \prod_{i=1}^{m} \sph^3)/\sph^1$ is rationally homotopy equivalent to either $\CP^2\x \prod_{i=1}^{m} \sph^3$ or $\sph^2 \x \sph^5 \x \prod_{i=1}^{m-1} \sph^3$.
\end{prop}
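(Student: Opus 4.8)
The plan is to compute the minimal model of the quotient $Q := (\sph^5 \x \prod_{i=1}^m \sph^3)/\sph^1$ directly from the bundle structure afforded by the free $\sph^1$-action, and then to read off the two asserted models using rational ellipticity. Write $P = \sph^5 \x \prod_{i=1}^m \sph^3$. Since the $\sph^1$-action is free, the Borel construction $P_{\sph^1}$ is homotopy equivalent to $Q$, and the associated Borel fibration becomes
\[
P \To P_{\sph^1} \simeq Q \To B\sph^1 .
\]
From the long exact homotopy sequence of $\sph^1 \to P \to Q$ one checks that $Q$ is simply connected, and by Lemma~\ref{L:Borel_RE} it is rationally elliptic (of cohomological dimension $3m+4$). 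The same sequence records the rational homotopy groups: $d_2(Q)=1$, $d_3(Q)=m$, $d_5(Q)=1$, and $d_j(Q)=0$ otherwise.

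The heart of the argument is the relative minimal model of $Q$ over $B\sph^1$. As $B\sph^1$ has minimal model $(\Q[u],0)$ with $\deg u = 2$ and the fibre $P$ has minimal model $(\wedge(w,x_1,\dots,x_m),0)$ with $\deg w = 5$ and $\deg x_i = 3$, the model of $Q$ has the form $(\Q[u]\ox\wedge(w,x_1,\dots,x_m),D)$ with $D(u)=0$ and $D(\xi)\in\Q^+[u]\ox\wedge V_P$ for $\xi\in V_P$. A degree count is then decisive: the only element of $\Q^+[u]\ox\wedge V_P$ of degree $4$ is $u^2$, and the only such element of degree $6$ is $u^3$ (there are no elements of degree $2$ or $4$ in $\wedge V_P$, so no term $x_ix_j$ can occur). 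Hence
\[
D(x_i)=c_i\,u^2,\qquad D(w)=\alpha\,u^3,
\]
for scalars $c_i,\alpha\in\Q$, and this model is already minimal since $\im D\In\wedge^{\ge 2}$. I would stress that it is exactly this step — using the bundle over $B\sph^1$ rather than an abstract minimal model assembled from the numbers $d_j(Q)$ — that forces the answer: an abstract model with the same $d_j$ would also permit a differential $D(w)=\alpha u^3+\sum_{i<j}\beta_{ij}x_ix_j$, and such cross terms produce rationally elliptic spaces that are genuinely \emph{not} rationally homotopy equivalent to either model (for example, $[u]^3\neq 0$ whenever the $\beta_{ij}$ do not all vanish). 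This is the main obstacle, and it is the geometric input — that $Q$ is a genuine $\sph^1$-quotient, hence fibres over $B\sph^1$ — which resolves it.

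With the differential pinned down, I would finish by a short case analysis driven by rational ellipticity, which requires that some power of $u$ be a coboundary. If some $c_i\neq 0$, a change of basis among the degree-$3$ generators arranges $D(x_1)=u^2$ and $D(x_i)=0$ for $i\ge 2$; then $u^3=\pm D(u x_1)$ is exact, so after replacing $w$ by $w\mp\alpha u x_1$ one may take $D(w)=0$, and the model splits as the minimal model of $\sph^2\x\sph^5\x\prod_{i=1}^{m-1}\sph^3$. If instead all $c_i=0$, then only $D(w)=\alpha u^3$ can kill powers of $u$, so ellipticity forces $\alpha\neq 0$; rescaling $w$ gives $D(w)=u^3$, and the model splits as that of $\CP^2\x\prod_{i=1}^m\sph^3$. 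Once the relative minimal model over $B\sph^1$ is in place, this bookkeeping is routine, and it exhausts the two possibilities claimed.
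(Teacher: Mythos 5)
Your argument is correct and follows essentially the same route as the paper: pass to the Borel fibration $P \to P/\sph^1 \to B\sph^1$, write down the relative minimal model $(\Q[u]\ox\wedge(w,x_1,\dots,x_m),D)$, observe that degree reasons force $D(x_i)=c_i u^2$ and $D(w)=\alpha u^3$, and split into the two cases according to whether some $c_i$ is nonzero. One small quibble with your (non-essential) aside: the claim that $[u]^3\neq 0$ whenever some $\beta_{ij}\neq 0$ is not right in general (if additionally some $D(x_i)=u^2$ then $[u]^2=0$ already), though your broader point --- that the fibration over $B\sph^1$ is what excludes cross terms $x_ix_j$ from $D(w)$ --- is well taken and is indeed implicit in the paper's use of the relative minimal model.
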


% PROOF

\begin{proof}
For the sake of notation, let $P = \sph^5 \x \prod_{i=1}^{m} \sph^3$.  First note that, since $\sph^1$ acts freely on $P$, there is a principal $\sph^1$-bundle $\sph^1 \to P \to P/\sph^1$.  As $\sph^1$ also acts (freely) on the contractible space $E\sph^1$, there is an associated bundle $E\sph^1 \to P_{\sph^1} \to P/\sph^1$, where $P_{\sph^1}$ is the Borel construction.  Hence, $P_{\sph^1}$ and $P/\sph^1$ are homotopy equivalent, and the fibre bundle $P \to P_{\sph^1} \to B\sph^1$ associated to the universal $\sph^1$-bundle becomes (up to homotopy) 
\[
P \to P/\sph^1 \to B\sph^1 \,.
\]

The minimal models of $P$ and $B\sph^1$ are given by $(\wedge(x_1, \dots, x_m, y), 0)$ and (the polynomial algebra) $(\Q[u],0)$ respectively, where $\deg(x_i) = 3$, for all $i = 1, \dots, m$, $\deg(y) = 5$ and $\deg(u) = 2$.  Then the relative minimal model for $P/\sph^1$ is given by 
\[
(\Q[u] \ox \wedge(x_1, \dots, x_m, y), D)
\]
with $D(u) = 0$, $D(x_i) = \lambda_i  u^2 \in \Span_\Q\{u^2\}$, $i = 1, \dots, m$, and $D(y) = \alpha u^3 \in \Span_\Q \{u^3\}$.

Suppose first, some $\lambda_i$ is nonzero. Without loss of generality, $\lambda_1\neq 0$.  A change of basis via $\ol x_1 = \frac{1}{\lambda_1} x_1$, $\ol x_i = x_i - \lambda_i x_1$, $i = 2, \dots, m$, and $\ol y = y - \alpha \ol x_1 u$, therefore yields 
\[
D(\ol x_1) = u^2, \ D(\ol x_i) = 0, \ i = 2, \dots, m, \ \text{ and } \ D(\ol y) = 0. 
\]
The relative minimal model $(\Q[u] \ox \wedge(x_1, \dots, x_m, y), D)$ is then, in fact, a minimal model, namely that of $\sph^2 \x \sph^5 \x \prod_{i=1}^{m-1} \sph^3$.

Suppose now that $D(x_i) = 0$, for all $i = 1, \dots, m$.  Then $D(y) = \alpha u^3 \neq 0$, since otherwise the manifold $P/\sph^1$ would have infinite formal dimension.  Setting $\ol y = \frac{1}{\alpha}y$ yields $D(\ol y) = u^3$, and the relative minimal model $(\Q[u] \ox \wedge(x_1, \dots, x_m, y), D)$ is then the minimal model of $\CP^2 \x \prod_{i=1}^m \sph^3$.
\end{proof}

% REMARK

\begin{rem}
The fact that, in each dimension, there are only finitely many rational homotopy types of manifolds $(\sph^5 \x \prod_{i=1}^{m} \sph^3)/\sph^1$ and $(\prod_{i=1}^{m} \sph^3)/T^2$ is in stark contrast to the situation for ordinary homotopy types.  Indeed, in \cite{DV2, CE, Kr} it has been shown that, already in dimension $7$, there are infinitely many distinct homotopy types of such manifolds, distinguished by their cohomology rings.
\end{rem}

In the proof of Theorem \ref{T:ThmB2}, the only case where the existence of an effective torus action of maximal rank is truly required is when 
\[
(d_2(M), d_4(M), d_5(M), d_7(M)) = (2,0,0,0).
\]  
In all other cases, in order to compute the minimal model, it suffices to know that there is an almost-free torus action of rank $\left\lfloor\frac{n}{3}\right\rfloor$ (for $n \not\equiv 1 \mod 3$) or $\left\lfloor\frac{n}{3}\right\rfloor - 1$ (for $n \equiv 1 \mod 3$).  If, in the exceptional case, one assumes only the existence of an almost-free torus action of rank $\left\lfloor\frac{n}{3}\right\rfloor - 1$, then the result becomes much less rigid.

% PROP

\begin{prop}
\label{P:AMAF}
In each dimension $n = 3 m + 4 \not\equiv 0 \mod 4$, there are infinitely many rational homotopy types of closed, smooth, (simply connected) rationally elliptic manifolds which admit a free torus action of rank $\left\lfloor\frac{n}{3}\right\rfloor - 1 = m$, but which do not admit an effective torus action of rank $\left\lfloor\frac{2n}{3}\right\rfloor$.
\end{prop}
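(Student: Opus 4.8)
The plan is to play an explicit infinite family off against the finiteness already contained in Theorem~\ref{T:RIGIDITY}. By part~(2) of that theorem (equivalently, by Theorem~\ref{T:ThmB2} together with the finiteness established in this section and in Section~\ref{S:Biquotients}), once the dimension $n$ is fixed there are only \emph{finitely many} rational homotopy types of closed, smooth, simply connected, rationally elliptic $n$-manifolds that admit an effective $T^{\lfloor 2n/3\rfloor}$ action: the list of model spaces $N^m\x\prod\sph^3$ is finite as soon as $n$ is prescribed. Consequently, to prove the proposition it suffices to exhibit, in each dimension $n=3m+4\not\equiv 0\bmod 4$, infinitely many rational homotopy types of closed, smooth, simply connected, rationally elliptic $n$-manifolds carrying a free $T^m$ action. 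All but finitely many members of such a family must then fail to admit an effective $T^{\lfloor 2n/3\rfloor}$ action, which is precisely the assertion.

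Everything therefore reduces to constructing the infinite family. First I would treat the base dimension $n=7$ (the case $m=1$). Here the results of \cite{To, DV2, CE, Kr} supply infinitely many rational homotopy types of closed, simply connected, rationally elliptic $7$-manifolds $N^7$ carrying a free $\sph^1$ action; concretely one may take principal circle bundles over Totaro's $6$-dimensional biquotients $(\sph^3)^3/\!/T^3$, with Euler class chosen primitive so that the total space is simply connected. These manifolds are separated, rationally, by the cup-product structure they carry on $H^2$, which varies in a positive-dimensional moduli and hence attains infinitely many isomorphism types. For general $m\geq 1$ I would then set $M = N^7\x\prod_{i=1}^{m-1}\sph^3$, a closed, smooth, simply connected, rationally elliptic manifold of dimension $7+3(m-1)=3m+4=n$, equipped with the free $T^m$ action given by the free circle on $N^7$ together with a free circle on each $\sph^3$-factor. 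A minimal-model (Künneth) argument—using that the $\sph^3$-factors contribute only closed generators in degree $3$, so that any isomorphism of the tensor-product models restricts to an isomorphism of the $N^7$-factors—shows that inequivalent rational homotopy types among the $N^7$ persist after multiplying by the fixed factor $\prod\sph^3$. Thus the family $\{M\}$ realizes infinitely many rational homotopy types.

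With the family in place, the conclusion is immediate: infinitely many rational homotopy types occur among the $M$, yet only finitely many can support an effective $T^{\lfloor 2n/3\rfloor}$ action, so infinitely many of the $M$ admit no such action while carrying a free $T^m$ action by construction. The main obstacle—and the true content of the proposition—lies entirely in the construction of the infinite family: one must simultaneously ensure that the building blocks are simply connected (which is why the Euler classes are taken primitive), that the residual circle action remains genuinely free, and, most importantly, that infinitely many distinct rational homotopy types actually arise. It is this last point that the hypothesis $n\not\equiv 0\bmod 4$ governs: the invariant separating the members of the family is a cup-product quantity carried in an even-degree cohomology group, and when $n\equiv 0\bmod 4$ this pairing is symmetric and its realizable values are constrained, so the family degenerates to finitely many types; for $n\not\equiv 0\bmod 4$ no such constraint is present and the moduli remain infinite. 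Verifying these properties for the explicit models, rather than the final counting step, is where the real work resides.
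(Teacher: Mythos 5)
Your top-level counting logic is sound and matches the paper's strategy: exhibit infinitely many rational homotopy types of rationally elliptic $n$-manifolds with a free $T^m$ action, and invoke the finiteness of the list in Theorem~\ref{T:ThmB2} (plus Theorem~\ref{T:biqs}) to conclude that all but finitely many of them cannot carry an effective $T^{\lfloor 2n/3\rfloor}$ action. The gap is that your infinite family is not infinite. A principal circle bundle over one of Totaro's quotients $B^6=(\sph^3)^3/T^3$ is always of the form $(\sph^3)^3/T^2$ for a free, \emph{linear} $T^2$ action: since $(\sph^3)^3$ is $2$-connected, $H^2(B^6;\Z)\cong\Z^3$ consists entirely of classes pulled back from $BT^3$, so a primitive Euler class corresponds to a surjective character $\chi\colon T^3\to\sph^1$ and the total space of the associated bundle is $(\sph^3)^3/\ker\chi$ with $\ker\chi\cong T^2$ acting as a subtorus of the original linear $T^3$. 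By the paper's own Theorem~\ref{T:biqs}, such quotients realize only \emph{three} rational homotopy types in dimension $7$; the infinitely many cup-product structures on $H^2(B^6)$ do not survive passage to the circle-bundle total space. Worse, every such $N^7\x\prod_{i=1}^{m-1}\sph^3=(\prod^{m+2}\sph^3)/T^2$ admits an effective action of $T^{2(m+2)}/T^2=T^{2m+2}=T^{\lfloor 2n/3\rfloor}$, i.e.\ your examples all lie \emph{inside} the class you need to escape.

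A second, related problem is your account of the hypothesis $n\not\equiv 0\bmod 4$: the claim that it governs a "symmetric cup-product constraint" on your family is invented and, tellingly, your construction would not use the hypothesis at all if it worked. In the paper the hypothesis enters through the Sullivan--Barge realization theorem \cite[Thm.~3.2]{FOT}: one writes down abstract minimal models $(\wedge(u_1,u_2,x_1,\dots,x_{m+2}),d_\alpha)$ with $d_\alpha(x_1)=u_1u_2$ and $d_\alpha(x_2)=u_1^2+\alpha u_2^2$, which are pairwise non-isomorphic unless the parameters differ by a rational square (hence infinitely many types), and the condition $n\not\equiv 0\bmod 4$ (and likewise $n-m\not\equiv 0\bmod 4$ for the quotient model) is exactly what allows these models and their $\Q[v_1,\dots,v_m]$-relative extensions to be realized by closed smooth manifolds; the free $T^m$ action is then produced via \cite[Prop.~7.17]{FOT}, and Theorem~\ref{T:biqs} pins down precisely which parameters ($\alpha=\pm 1$ up to squares) could admit a maximal effective action. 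To repair your argument you would have to replace the circle bundles over Totaro's examples by a genuinely non-biquotient infinite family, which is essentially the content of the paper's abstract construction.
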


% PROOF

\begin{proof}
Fix a dimension $n = 3m+4\not\equiv 0 \mod 4$.  For each $\alpha \in \Z \backslash \{0\}$, consider the minimal model $(\wedge V, d_\alpha)$, where 
$$
\wedge V = \wedge(u_1, u_2, x_1, \dots, x_{m+2}),
$$ 
with $\deg(u_i) = 2$, $i = 1,2$, $\deg(x_j) = 3$, $j = 1, \dots, m+2$, and the differential is given by $d_\alpha(u_i) = 0$, $d_\alpha(x_1) = u_1 u_2$, $d_\alpha(x_2) = u_1^2 + \alpha u_2^2$ and $d_\alpha(x_j) = 0$, for all $j = 3, \dots, m+2$.  It is easy to verify that two such models, $(\wedge V, d_\alpha)$ and $(\wedge V, d_\beta)$, are isomorphic if and only if there is some $c \in \Q$ such that $\beta = c^2 \alpha$.

Since $n \not\equiv 0 \mod 4$, by \cite[Thm.\ 3.2]{FOT}, there is a smooth, closed, (simply connected) rationally elliptic manifold $M^n_\alpha$ with minimal model $(\wedge V, d_\alpha)$.  Recall that the minimal model of $BT^m$ is $(\Q[v_1, \dots, v_m], 0)$, with $\deg(v_l) = 2$, for all $l = 1, \dots, m$.  Define a relative minimal model
$$
(\Q[v_1, \dots, v_m], 0) \to (\Q[v_1, \dots, v_m] \ox \wedge V, D_\alpha) \to (\wedge V, d_\alpha),
$$
where $D_\alpha(v_l) = 0$, for all $l = 1, \dots, m$, $D_\alpha(x_1) = d_\alpha (x_1)$, $D_\alpha(x_2) = d_\alpha (x_2)$ and $D_\alpha(x_j) = v_{j-2}^2$, for $j = 3 \dots m$.  

Then $(\Q[v_1, \dots, v_m] \ox \wedge V, D_\alpha)$ is, in fact, a minimal model and 
$$
\dim_\Q H^*(\Q[v_1, \dots, v_m] \ox \wedge V, D_\alpha) < \infty.
$$
As this model has formal dimension $n-m = 2m + 4 \not\equiv 0 \mod 4$, \cite[Thm.\ 3.2]{FOT} again implies that there is a smooth, closed, simply connected, $(n-m)$-dimensional manifold $N_\alpha$ with minimal model $(\Q[v_1, \dots, v_m] \ox \wedge V, D_\alpha)$.

Now, by \cite[Prop.\ 7.17]{FOT} (see also \cite[Prop.\ 4.2]{Ha} and \cite[Prop.\ 4.3.20]{AP}), there is a smooth, closed, simply connected $n$-manifold $M'_\alpha$, with the same rational homotopy type as $M_\alpha$, on which the torus $T^m$ acts freely with quotient $N_\alpha$.

Finally, by Theorem \ref{T:ThmB2}, if $M'_\alpha$ admits an effective action by a torus of rank $\left\lfloor\frac{2n}{3}\right\rfloor$, it must be rationally homotopy equivalent to a manifold of the form $(\prod_{i=1}^{m+2} \sph^3)/T^2$.  However, it will be shown in Theorem \ref{T:biqs} that such a manifold has a minimal model of the form $(\wedge V, d_\alpha)$ if and only if $\alpha = \pm 1$. 
\end{proof}

% SECTION: BIQUOTIENTS
%-------------------------------------
%-------------------------------------

\section{Quotients of free, linear $T^2$ actions on $\prod \sph^3$}
\label{S:Biquotients}

In this section, it is shown that, for each $N \in \N$, there are only finitely many rational homotopy types of manifolds given by quotients of $\prod_{i = 1}^N \sph^3$ by a free, linear $T^2$ action. 
Recall first that, up to equivariant diffeomorphism, there is a unique (smooth) effective $T^2$ action on $\sph^3$, given by
\[
(z,w) \cdot q = z u + w v j,
\]
where $z,w \in \sph^1 \in \C$ and $q = u + vj \in \sph^3 \In \HH$, for $u, v \in \C$ with $|q| = |u|^2 + |v|^2 = 1$.  As a consequence, any linear, effective $T^2$ action on a product $\prod_{i = 1}^N \sph^3$ arises from a homomorphism $T^2 \to T^{2N}$ and can be written in the form
\beq
\label{Eq:action}
(z,w) \cdot \underline q = 
\bpm 
z^{a_1} w^{k_1} u_1 + z^{b_1} w^{l_1} v_1 j \\ 
\vdots \\ 
z^{a_N} w^{k_N} u_N + z^{b_N} w^{l_N} v_N j 
\epm,
\eeq
where $\underline q = (q_1, \dots, q_N)^t \in \prod_{i = 1}^N \sph^3$, with $q_i = u_i + v_i j \in \sph^3$ as above, and the integers $a_i$, $b_i$, $k_i$ and $l_i$ satisfy $\gcd(a_1, \dots, a_N, b_1, \dots, b_N) = 1$ and  $\gcd(k_1, \dots, k_N, l_1, \dots, l_N) = 1$ (to ensure effectiveness).

It is a simple exercise to check that such an action is free if and only if, for all choices $(c_i, m_i) \in \{(a_i, k_i), (b_i, l_i)\}$, one has
\beq
\label{Eq:freeness}
\gcd\left\{\bvm c_i & c_j \\ m_i & m_j \evm \Bigm| 1 \leq i < j \leq N \right\} = 1,
\eeq 
where, for any matrix $A$, $|A|$ denotes its determinant.

% THM: BIQUOTIENTS

\begin{thm}
\label{T:biqs}
Suppose that a manifold $M$ arises as the quotient of $\prod_{i = 1}^N \sph^3$, $N \geq 3$, by a free, linear $T^2$ action.  Then $M$ is rationally homotopy equivalent to either 
\begin{align*}
(\sph^2 \x \sph^2) &\x \prod_{i = 1}^{N-2} \sph^3, \\
(\CP^2 \# \CP^2) &\x \prod_{i = 1}^{N-2} \sph^3, \\
\textrm{or } \  T^1 (\sph^2 \x \sph^2) &\x \prod_{i = 1}^{N-3} \sph^3,
\end{align*}
where $T^1 (\sph^2 \x \sph^2) $ denotes the unit tangent bundle of $\sph^2 \x \sph^2$.
\end{thm}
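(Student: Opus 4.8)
The plan is to compute a minimal model for $M$ and to read off its rational homotopy type from a single pencil of binary quadratic forms. Since the $T^2$-action \eqref{Eq:action} is free, $M=(\prod_{i=1}^N\sph^3)/T^2$ is homotopy equivalent to its Borel construction, and there is a fibration $\prod_{i=1}^N\sph^3\to M\to BT^2$. The minimal model of $\prod\sph^3$ is $(\wedge(x_1,\dots,x_N),0)$ with $\deg x_i=3$, and that of $BT^2$ is $(\Q[u,w],0)$ with $\deg u=\deg w=2$; the resulting relative minimal model $(\Q[u,w]\otimes\wedge(x_1,\dots,x_N),D)$ is in fact minimal, with $D(u)=D(w)=0$ and $D(x_i)$ the $T^2$-equivariant Euler class of the $i$-th sphere, namely the product of its two weight forms
\[
D(x_i)=(a_iu+k_iw)(b_iu+l_iw)\in\operatorname{Sym}^2(\Q^2).
\]
Set $Q=\im(D|_{V^3})\subseteq\operatorname{Sym}^2(\Q^2)$ and $r=\dim_\Q Q$. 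A change of basis splits off $\prod^{N-r}\sph^3$ (the generators in $\ker D$), and the rational homotopy type of $M$ is determined by $N$ together with the $\GL_2(\Q)$-orbit of the pencil $Q$, since any model automorphism preserves degree and hence acts by $\GL_2(\Q)$ on $\{u,w\}$ and by $\GL_N(\Q)$ on $\{x_i\}$. As $M$ is a closed manifold, $\dim_\Q H^*(M;\Q)<\infty$ forces $\Q[u,w]/Q$ to be finite-dimensional, so $r\in\{2,3\}$.

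When $r=3$ one has $Q=\operatorname{Sym}^2(\Q^2)$, and after a change of basis $D(x_1)=u^2$, $D(x_2)=w^2$, $D(x_3)=uw$. This is precisely the relative minimal model of the unit tangent bundle fibration $\sph^3\to T^1(\sph^2\x\sph^2)\to\sph^2\x\sph^2$, the transgression being the Euler class $e(T(\sph^2\x\sph^2))\doteq uw$, so $M\simeq_\Q T^1(\sph^2\x\sph^2)\x\prod_{i=1}^{N-3}\sph^3$.

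When $r=2$ the split-off factor is a rational Poincar\'e duality space $M_0$ of formal dimension $4$ with $b_2=2$, whose cohomology ring is $\Q[u,w]/(q_1,q_2)$ for two spanning Euler classes $q_1,q_2$; its rational homotopy type is determined by the intersection form $\mathcal B(v)=v^2\bmod Q$ on $H^2=\Q^2$. The heart of the argument is to show that freeness forces $\mathcal B$ to be unimodular. First I would argue that \eqref{Eq:freeness} forbids the $q_i$ from having a common projective zero modulo any prime $p$: each $q_i$ is a product of two integral linear forms, so a common zero of $q_1,q_2$ is $\mathbb F_p$-rational and is a common zero of every $q_i\in Q$; for each $i$ one of the two linear factors of $q_i$ then vanishes there modulo $p$, so the corresponding weight vectors all lie on a single line in $\mathbb F_p^2$ and every $2\times2$ determinant in \eqref{Eq:freeness} becomes divisible by $p$ — a contradiction. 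Hence the resultant of $q_1,q_2$, which is the product of the four cross-determinants of the weight vectors of the two spanning spheres, is a unit; being a product of integers equal to $\pm1$, each cross-determinant is itself $\pm1$. By the criterion \eqref{Eq:freeness} in the case $N=2$, the $T^2$-action on the product $\sph^3\x\sph^3$ of those two spheres is itself free, so its quotient $M_0$ is a genuine closed, simply connected, smooth $4$-manifold. Poincar\'e duality then makes its intersection form unimodular, and the classification of rank-$2$ unimodular symmetric bilinear forms yields $\mathcal B\cong\langle 1,1\rangle$, $\langle -1,-1\rangle$, $\langle 1,-1\rangle$ or the hyperbolic plane. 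Rationally these collapse to two cases, $M_0\simeq_\Q\CP^2\#\CP^2$ (definite) or $M_0\simeq_\Q\sph^2\x\sph^2$ (indefinite), whence $M\simeq_\Q M_0\x\prod_{i=1}^{N-2}\sph^3$.

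The main obstacle is precisely this $r=2$ step: extracting genuine integral unimodularity of $\mathcal B$ from the combinatorial freeness condition \eqref{Eq:freeness}. Everything hinges on the reduction-modulo-$p$ argument linking a common projective zero of the Euler classes to a line on which a freeness-violating choice of weight vectors lies; some care is needed when clearing denominators to write each $q_i$ in terms of a spanning pair over $\mathbb Z_{(p)}$, and in the degenerate situations where a weight vector vanishes modulo $p$ or a quadratic has a repeated factor. Once unimodularity is in hand, the identification of the rational homotopy type of $M_0$ is standard, since a simply connected rational $4$-manifold is determined up to rational homotopy equivalence by its intersection form over $\Q$.
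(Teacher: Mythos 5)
Your setup (the biquotient/Borel minimal model with $D(x_i)=(a_iu+k_iw)(b_iu+l_iw)$, the dichotomy on $r=\dim\im(D|_{V^3})\in\{2,3\}$, and the identification of the $r=3$ case with $T^1(\sph^2\x\sph^2)\x\prod\sph^3$) coincides with the paper's. In the $r=2$ case you take a genuinely different and attractive route: realize the $4$-dimensional factor as an honest closed manifold $M_0=(\sph^3\x\sph^3)/T^2$ built from two of the spherical factors, and invoke Poincar\'e duality plus the classification of rank-$2$ unimodular forms. The paper instead normalizes the differential directly and pins down the sign via an integer-arithmetic lemma (Lemma \ref{L:same}). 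Unfortunately, your version has a genuine gap at exactly the step you flag: the claim that the resultant of an \emph{arbitrary} spanning pair $q_{i_1},q_{i_2}$ is a unit is false, so the restricted $T^2$-action on the corresponding two spheres need not be free and $M_0$ need not be a manifold.

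Concretely, take $N=3$ with weight vectors $(1,0),(1,0)$ on the first sphere, $(0,1),(0,2)$ on the second, and $(1,1),(-1,1)$ on the third. Every admissible choice in \eqref{Eq:freeness} contains a determinant equal to $\pm 1$, so the $T^2$-action on $\sph^3\x\sph^3\x\sph^3$ is free; here $q_1=u^2$, $q_2=2w^2$, $q_3=w^2-u^2$, so $r=2$ and $q_1,q_2$ span $Q$. Yet the four cross-determinants between spheres $1$ and $2$ are $1,2,1,2$, the resultant is $4$, and the restricted action on $\sph^3_1\x\sph^3_2$ has $\Z/2$ isotropy. Your mod-$p$ argument breaks precisely where you anticipated trouble: a common zero of $\bar q_1,\bar q_2$ over $\mathbb{F}_2$ (namely $[0:1]$, since $\bar q_2\equiv 0$) is \emph{not} a zero of $\bar q_3=(u+w)^2$, because $q_3=\tfrac12 q_2-q_1$ is not a $\Z_{(2)}$-combination of the chosen pair. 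So the issue is not merely one of care in clearing denominators: the intermediate statement is wrong as formulated, and what you actually need is that \emph{some} spanning pair has all four cross-determinants equal to $\pm1$ (in the example, spheres $1$ and $3$ work). Proving that such a pair always exists is a nontrivial arithmetic lemma in its own right --- essentially the content that the paper extracts via Lemma \ref{L:same} --- and without it the $r=2$ case, hence the distinction between $\sph^2\x\sph^2$ and $\CP^2\#\CP^2$, is not established. Everything else (the splitting $M\simeq_\Q M_0\x\prod\sph^3$, unimodularity once $M_0$ is known to be a closed $4$-manifold, and the collapse of the four unimodular classes to two rational homotopy types) is fine.
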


In order to establish Theorem \ref{T:biqs}, the following lemma will be useful.

% LEMMA

\begin{lem}
\label{L:reduced}
Suppose that $T^2$ acts freely and linearly on $\prod_{i = 1}^N \sph^3$ via an action of the form \eqref{Eq:action}.  Then it may be assumed, without loss of generality, that $a_1 \neq 0$, $k_1 = 0$, $(b_1, l_1) \neq (0,0)$ and $k_2 l_2 \neq 0$.
\end{lem}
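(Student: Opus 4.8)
The plan is to encode the action \eqref{Eq:action} by its $N$ pairs of weight vectors $\{(a_i,k_i),(b_i,l_i)\}\In\Z^2$ and to normalise these pairs using the three operations that preserve the quotient up to diffeomorphism (hence its rational homotopy type) and leave the freeness criterion \eqref{Eq:freeness} intact: reparametrising $T^2$ by an automorphism, which applies a single matrix $g\in\GL_2(\Z)$ to \emph{every} weight vector at once and multiplies each $2\times2$ determinant in \eqref{Eq:freeness} by $\det g=\pm1$; permuting the spherical factors; and interchanging the two weight vectors within a single factor. The argument hinges on a single reading of freeness: if one could choose, for each factor, a weight vector in such a way that the $N$ selected vectors all lay on one line through the origin, then every determinant in \eqref{Eq:freeness} would vanish, forcing the gcd there to be $0$ and contradicting freeness. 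I expect this observation to be the only real content of the proof, with everything else amounting to bookkeeping.

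First I would secure $a_1\neq0$, $k_1=0$ and $(b_1,l_1)\neq(0,0)$ together. Some factor must have \emph{both} of its weight vectors nonzero: otherwise each factor contains a weight equal to $(0,0)$, and selecting these makes every determinant in \eqref{Eq:freeness} vanish, the forbidden situation above. Relabelling such a factor as the first and denoting one of its (nonzero) weights by $(a_1,k_1)$, I would apply $g\in\GL_2(\Z)$ carrying $(a_1,k_1)$ to $(d,0)$, where $d=\gcd(a_1,k_1)\geq1$. This yields $a_1\neq0$ and $k_1=0$, and since $g$ is invertible and the other weight of this factor was nonzero, the transformed $(b_1,l_1)$ is nonzero as well.

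It then remains to produce a second factor with $k_2l_2\neq0$. The operations still at my disposal---permuting the factors $2,\dots,N$, sign changes, and shears $\bsm \pm1 & t\\ 0 & \pm1\esm$---fix $(d,0)$ up to sign and do not change which second coordinates vanish, so the three conditions just arranged for the first factor survive untouched. Now some factor $j\in\{2,\dots,N\}$ must have both of its weight vectors with nonzero second coordinate: were this false, each such factor would carry a weight lying on the first-coordinate axis, as does the first factor (namely $(d,0)$), and selecting all of these would again place $N$ vectors on that axis, making every determinant in \eqref{Eq:freeness} vanish and contradicting freeness. A permutation moving this factor into position $2$ then gives $k_2\neq0$ and $l_2\neq0$, and completes the normalisation. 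The main obstacle is precisely the recognition that \eqref{Eq:freeness} rules out these degenerate selections; once that is granted, each individual step is routine.
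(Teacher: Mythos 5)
Your proof is correct and takes essentially the same approach as the paper's: it rules out degenerate weight configurations via \eqref{Eq:freeness} (a collinear selection of one weight vector per factor forces the gcd there to be $0$), normalises the first factor by a $\GL_2(\Z)$ reparametrisation of $T^2$, and then permutes factors. The only cosmetic difference is that you first locate a factor whose two weight vectors are both nonzero, where the paper first locates one with $a_ib_i\neq 0$; both observations play the identical role in the normalisation.
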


% PROOF

\begin{proof}
Suppose first that $a_ib_i = 0$ for all $i = 1, \dots, N$.  For each $i$, set $c_i$ to be whichever of $a_i$ and $b_i$ is equal to zero.  However, by the freeness condition \eqref{Eq:freeness}, this is impossible.  Indeed, it would imply that there is some point with isotropy group containing an $\sph^1$.  Thus there is some $i \in \{1, \dots, N\}$ such that $a_i b_i \neq 0$.  As swapping factors in $\prod_{i = 1}^N \sph^3$ is an equivariant diffeomorphism, it may be assumed that $i = 1$.

Consider now the term $z^{a_1} w^{k_1}$ in the first factor.  If $d = \gcd(a_1, k_1) \neq 0$, set $m = a_1/d$ and $n = k_1/d$.  In particular, there are integers $r,s \in \Z$ satisfying $ms - nr = 1$.  The entire action of $T^2$ can be reparametrised by $x = z^m w^n$ and $y = z^r w^s$, while ensuring that effectiveness is maintained.  In this new parametrisation, the old term $z^{a_1} w^{k_1}$ becomes $x^d$.  

Similarly, the old term $z^{b_1} w^{l_1}$ becomes $x^{b_1 s - l_1 r} y^{-b_1 n + l_1 m}$.  As $ms - nr = 1$ and $b_1 \neq 0$, these indices cannot be simultaneously zero.  Thus, after relabelling $x,y$ with $z,w$ and relabelling the indices in the new parametrisation appropriately, it may be assumed without loss of generality that the indices of the action on the first factor satisfy $a_1 \neq 0$, $k_1 = 0$ and $(b_1, l_1) \neq (0,0)$.

Given now $k_1 = 0$, it follows from freeness, by the same argument as for $a_i b_i$ above, that there must be some $i > 1$ such that $k_i l_i \neq 0$.  By swapping factors if necessary, it may be assumed without loss of generality that $i = 2$.
\end{proof}

The following technical lemma will be crucial in the proof of Theorem~\ref{T:biqs}.

% LEMMA: TECHNICAL LEMMA

\begin{lem}
\label{L:same}
Suppose that $a_i, b_i, k_i, l_i \in \Z$, $i = 1, \dots, N$, are integers for which the conditions in \eqref{Eq:freeness} hold and such that $a_1 \neq 0$, $k_1 = 0$, $l_1 \neq 0$ and $k_2 l_2 \neq 0$.  Suppose further that $\gcd(b_1, l_1) = 1$.  Then the matrix
\beq
\label{Eq:Matrix}
\bpm
b_1 & a_2 b_2 & \dots & a_N b_N\\
l_1 & a_2 l_2 + b_2 k_2 & \dots & a_N l_N + b_N k_N\\
0 & k_2 l_2 & \dots & k_N l_N
\epm
\eeq
has rank $\geq 2$.  If the rank is precisely $2$ then there exists $\varepsilon \in \{ \pm 1\}$ such that, for all $j = 2, \dots, N$,
\[
\bvm b_1 & a_j \\ l_1 & k_j \evm  \bvm b_1 & b_j \\ l_1 & l_j \evm = \varepsilon k_j l_j.
\]
\end{lem}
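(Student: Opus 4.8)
The statement concerns a $3\times N$ integer matrix built from the exponents of a free, linear $T^2$-action on $\prod\sph^3$. The plan is to first establish the rank bound, and then analyse the rank-$2$ case to extract the claimed multiplicative identity. I would begin by observing that the freeness condition \eqref{Eq:freeness}, applied to the various choices $(c_i,m_i)\in\{(a_i,k_i),(b_i,l_i)\}$, gives strong arithmetic control on the $2\times 2$ minors of the exponent data. In particular, setting up notation
\[
A_j = \bvm b_1 & a_j \\ l_1 & k_j \evm, \qquad
B_j = \bvm b_1 & b_j \\ l_1 & l_j \evm,
\]
one sees that the entries of the matrix \eqref{Eq:Matrix} in the first column are $(b_1,l_1,0)$ and in the $j$-th column (for $j\geq 2$) are the coefficients of the quadratic form obtained from the product of the two linear forms indexed by $(a_j,k_j)$ and $(b_j,l_j)$. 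The key idea I would pursue is that the columns of \eqref{Eq:Matrix} encode the values of a symmetric bilinear pairing: the second and third rows are precisely the cross terms and the $k_jl_j$ entries one gets by expanding products, so the matrix should be interpreted via the symmetric square $\mathrm{Sym}^2$ of the plane.

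For the rank claim, I would argue by contradiction: if the rank were $\leq 1$, then every column would be a scalar multiple of $(b_1,l_1,0)$, forcing $k_jl_j=0$ for all $j$. This would contradict $k_2l_2\neq 0$, so the rank is at least $2$. This step is short.

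The substance is the rank-$2$ case. Here I would exploit the fact that the first column $(b_1,l_1,0)^t$ is nonzero (since $l_1\neq 0$), and that for each $j\geq 2$ the three entries $(b_j\text{-column})$ are a quadratic expression in the columns of the $2\times 2$ block $\bsm a_j & b_j \\ k_j & l_j \esm$. The natural approach is to view the assignment
\[
(a_j,k_j,b_j,l_j) \longmapsto \bpm a_jb_j \\ a_jl_j+b_jk_j \\ k_jl_j \epm
\]
as the product of the two vectors $(a_j,k_j)$ and $(b_j,l_j)$ under the identification of $\mathrm{Sym}^2(\Q^2)$ with the space of symmetric $2\times 2$ matrices. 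Rank $2$ of \eqref{Eq:Matrix} then means all these symmetric matrices, together with the degenerate one corresponding to the first column, lie in a common two-dimensional subspace of $\mathrm{Sym}^2(\Q^2)\cong\Q^3$. I would use the normalisation $\gcd(b_1,l_1)=1$ to find a complementary linear form, expand the rank-$2$ relation as the vanishing of all $3\times 3$ minors involving the first column and two others, and then identify the resulting bilinear constraint. The cleanest route is to show that every vector $(a_j,k_j)$ and $(b_j,l_j)$ is constrained relative to the fixed direction $(b_1,l_1)$ so that $A_jB_j$ differs from $k_jl_j$ by a fixed sign $\varepsilon$; the sign is forced to be independent of $j$ because changing it for one index would increase the rank to $3$. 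I expect the main obstacle to be bookkeeping: verifying that the $\varepsilon$ is a single global sign (and equals $\pm1$, using the $\gcd$ and freeness conditions to rule out other scalar factors) rather than a $j$-dependent rational constant. I would handle this by computing two of the $3\times 3$ minors explicitly, deducing the identity $A_jB_j=\varepsilon_j k_jl_j$ for each $j$, and then showing that the $2\times 2$ minor of \eqref{Eq:Matrix} formed from columns $j$ and $j'$ (together with the shared first column via a rank argument) forces $\varepsilon_j=\varepsilon_{j'}$.
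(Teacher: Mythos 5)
Your reduction of the problem is on the right track: the rank bound is correct (and essentially the paper's observation that the first two columns are independent, since $l_1\neq 0$ and $k_2l_2\neq 0$), and in the rank-$2$ case the vanishing of the $3\times 3$ minors does show that the two linear functionals $\phi(u,v,w)=l_1^2u-b_1l_1v+b_1^2w$ and $\psi(u,v,w)=w$, both of which annihilate the first column and evaluate on the $j$-th column to $\bvm b_1 & a_j \\ l_1 & k_j\evm\bvm b_1 & b_j \\ l_1 & l_j\evm$ and $k_jl_j$ respectively, are proportional on the column span. This is exactly the computation in the paper ($x_i=\mu_i x_2$, $y_i=\mu_i y_2$), so up to this point you have reproduced its argument in $\mathrm{Sym}^2$ language. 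But your plan breaks down at the decisive step. Linear algebra (minors, rank) can only deliver the existence of coprime integers $r,s$ with $r\,x_j=s\,y_j$ for all $j$, i.e.\ that the ratio $x_j/y_j$ is one fixed rational constant; it cannot determine that this constant is $\pm 1$. Indeed, rescaling all $k_j,l_j$ by an integer leaves every rank unchanged while changing the ratio, so the value $\pm1$ is an arithmetic consequence of the freeness condition \eqref{Eq:freeness}, not of the rank hypothesis. Your proposed order of steps --- first deduce $A_jB_j=\varepsilon_j k_jl_j$ with $\varepsilon_j\in\{\pm1\}$ for each individual $j$ from two explicit minors, then match the signs --- therefore cannot work: for $N\geq 3$ the individual identity is not implied by any minor computation (and is not even a consequence of \eqref{Eq:freeness} applied to a single index, since the gcd there runs over all pairs), while the sign-matching step you worry about is the easy part, being nothing but the constancy of the ratio.

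The missing content is the number-theoretic argument that $r=\pm1$ and $s=\pm1$. The paper proves this by contradiction with \eqref{Eq:freeness}: if a prime $p$ divides $s$, then $p\mid x_j$ for all $j$, so for each $j\geq 2$ one may choose $(c_j,m_j)\in\{(a_j,k_j),(b_j,l_j)\}$ with $\bvm b_1 & c_j\\ l_1 & m_j\evm\equiv 0 \bmod p$; column-linearity of the determinant together with $\gcd(b_1,l_1)=1$ then forces $\bvm c_{j_1} & c_{j_2}\\ m_{j_1} & m_{j_2}\evm\equiv 0\bmod p$ for all $2\leq j_1<j_2\leq N$, violating \eqref{Eq:freeness}. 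A parallel argument using $(c_1,m_1)=(a_1,0)$ with $a_1\neq 0$ shows any prime divisor of $r$ leads to the same contradiction, and $r\neq 0$ because $y_2=k_2l_2\neq 0$. Without some version of this divisibility argument --- which is where the hypotheses $\gcd(b_1,l_1)=1$, $a_1\neq 0$ and the full strength of \eqref{Eq:freeness} enter --- the lemma is not proved; everything you have written before that point only establishes proportionality with an undetermined rational constant.
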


% PROOF

\begin{proof}
First notice that the statement is trivial for $N = 2$, since the terms on the left- and right-hand side must each be equal to $\pm 1$ by considering the conditions \eqref{Eq:freeness}.  Here it is important that $a_1 \neq 0$.

From now on assume that $N \geq 3$.  The rank of the matrix is clearly at least two, since the first two columns are linearly independent.  If the rank is precisely $2$ then, for all $i = 3, \dots, N$, there exist $\lambda_i, \mu_i \in \Q$ such that
\begin{align}
\label{Eq:LD1}
a_i b_i &= \lambda_i b_1 + \mu_i a_2 b_2 \\
\label{Eq:LD2}
a_i l_i + b_i k_i &= \lambda_i l_1 + \mu_i (a_2 l_2 + b_2 k_2) \\
\label{Eq:LD3}
k_i l_i &= \mu_i k_2  l_2.
\end{align}
For all $j = 2, \dots, N$, define
\[
x_j = \bvm b_1 & a_j \\ l_1 & k_j \evm \bvm b_1 & b_j \\ l_1 & l_j \evm \ 
\textrm{ and } \  
y_j = k_j l_j.
\]
By \eqref{Eq:LD3}, $y_i = \mu_i y_2$, for all $i = 3, \dots, N$.  On the other hand, from \eqref{Eq:LD1}, \eqref{Eq:LD2} and \eqref{Eq:LD3} it follows that, for all $i = 3, \dots, N$,
\begin{align*}
x_i &= b_1^2 k_i l_i - b_1 l_1(a_i l_i + b_i k_i) + l_1^2 a_i b_i \\
&= \mu_i b_1^2 k_2 l_2 - b_1 l_1 (\lambda_i l_1 + \mu_i (a_2 l_2 + b_2 k_2)) + l_1^2 (\lambda_i b_1 + \mu_i a_2 b_2) \\
&= \mu_i x_2 - \lambda_i b_1 l_1^2 + \lambda_i b_1 l_1^2 \\
&= \mu_i x_2.
\end{align*}
Therefore, since $y_2 \neq 0$, the matrix 
\[
\bpm
x_2 & x_3 & \dots & x_N \\
y_2 & y_3 & \dots & y_N
\epm
=
\bpm
x_2 & \mu_3 x_2 & \dots & \mu_N x_2 \\
y_2 & \mu_3 y_2 & \dots & \mu_N y_2
\epm
\]
has rank 1 and the rows must be linearly dependent.  Thus there are integers $r, s \in \Z$ with $\gcd(r,s) = 1$ such that 
\[
r x_j = s y_j \ \textrm{ for all } \ j = 2, \dots, N.  
\]

It turns out that $s=\pm 1$. Indeed, otherwise $s = 0 \mod p$, for some prime $p > 1$.  Since $\gcd(r,s) = 1$, it would then follow that $x_j = 0 \mod p$, for all $ j = 2, \dots, N$.  Hence, for each $j = 2, \dots, N$, one could choose $(c_j, m_j) \in \{(a_j, k_j), (b_j, l_j)\}$ such that $\left| \begin{smallmatrix} b_1 & c_j \\ l_1 & m_j \end{smallmatrix} \right| = 0 \mod p$.

By the linearity of the determinant in the second column, for every $2 \leq j_1 < j_2 \leq N$ one has (modulo $p$)
\begin{align*}
0=- m_{j_2} \bvm b_1 & c_{j_1} \\ l_1 & m_{j_1} \evm + m_{j_1} \bvm b_1 & c_{j_2} \\ l_1 & m_{j_2} \evm &= 
 l_1 \bvm c_{j_1} & c_{j_2} \\ m_{j_1} & m_{j_2} \evm \\
\intertext{as well as}
0=-c_{j_2} \bvm b_1 & c_{j_1} \\ l_1 & m_{j_1} \evm + c_{j_1} \bvm b_1 & c_{j_2} \\ l_1 & m_{j_2} \evm
&= b_1 \bvm c_{j_1} & c_{j_2} \\ m_{j_1} & m_{j_2} \evm.
\end{align*}
Since $\gcd(b_1, l_1) = 1$, it would follow that $\left| \begin{smallmatrix} c_{j_1} & c_{j_2} \\ m_{j_1} & m_{j_2} \end{smallmatrix} \right| = 0 \mod p$, for every $2 \leq j_1 < j_2 \leq N$.  However, this would ensure the existence of pairs $(c_1, m_1), \dots, (c_N, m_N)$ such that the condition \eqref{Eq:freeness} fails, contradicting the hypothesis.

As a consequence, $r \neq 0$ as, otherwise, $y_2 = 0$, which contradicts the hypothesis $k_2 l_2 \neq 0$.  
Moreover, any prime divisor of $r$ divides $y_j$, hence either $k_j$ or $l_j$, for all $j = 2, \dots, N$.  By setting $(c_1, m_1) = (a_1, k_1) = (a_1, 0)$ and by choosing appropriate $(c_j, m_j)$, $j = 2, \dots, N$, one readily finds a contradiction to the hypothesis that \eqref{Eq:freeness} holds.  As $r \neq 0$, it follows that $r = \pm 1$. This completes the proof.
\end{proof}

As illustrated in the lemma below, it is often possible to reduce minimal models to a simpler form.

% LEMMA

\begin{lem}
\label{L:easy}
Suppose that $(\Q[s_1, s_2] \ox \wedge(x_1, \dots, x_N), D)$, with $\deg(s_1)= \deg(s_2) = 2$ and $\deg(x_i) = 3$ for all $i = 1, \dots, N$, is a minimal model whose differential satisfies either
\begin{align*}
D(x_1) &= \alpha s_1^2, \\
D(x_2) &= \beta s_1 s_2 + \gamma s_2^2, \\
\intertext{where $\alpha, \gamma \neq 0$, or}
D(x_1) &= s_1 s_2, \\
D(x_2) &= s_1^2 + s_2^2.\\
\intertext{Then $(\Q[s_1, s_2] \ox \wedge(x_1, \dots, x_N), D)$ can be rewritten in the form $(\Q[\tilde s_1, \tilde s_2] \ox \wedge(\tilde x_1, \tilde x_2, x_3 \dots, x_N), D)$ such that $D$ satisfies}
D(\tilde x_1) &= \tilde s_1^2, \\
D(\tilde x_2) &= \tilde s_2^2.
\end{align*}
\end{lem}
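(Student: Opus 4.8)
The plan is to view the differential as assigning to each degree-three generator a binary quadratic form in $s_1,s_2$: since the only degree-four elements of the algebra are spanned by $s_1^2$, $s_1s_2$ and $s_2^2$, each $D(x_i)$ is such a form, and the goal is exactly to replace $x_1,x_2$ and $s_1,s_2$ by new generators so that $D(\tilde x_1)=\tilde s_1^2$ and $D(\tilde x_2)=\tilde s_2^2$. Equivalently, inside the pencil of quadratic forms spanned by $D(x_1)$ and $D(x_2)$ I must locate two linearly independent \emph{perfect squares}, whose associated linear forms will serve as $\tilde s_1,\tilde s_2$; the leading scalars can then be absorbed by rescaling the corresponding combinations of $x_1,x_2$. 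Crucially, all of this must be carried out over $\Q$, which is why the scalars are pushed into the $x_i$ rather than into the $s_i$ (so that no square roots of $\alpha$ or $\gamma$ are needed).

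In the first case I would keep $\tilde s_1=s_1$ and set $\tilde x_1=\frac1\alpha x_1$, so that $D(\tilde x_1)=s_1^2=\tilde s_1^2$ (here $\alpha\neq0$ is used). To treat $x_2$, I complete the square: adding the multiple $c=\tfrac{\beta^2}{4\alpha\gamma}$ of $x_1$ gives
\[
D\bigl(x_2+c\,x_1\bigr)=\beta s_1s_2+\gamma s_2^2+\tfrac{\beta^2}{4\gamma}s_1^2=\gamma\Bigl(s_2+\tfrac{\beta}{2\gamma}s_1\Bigr)^2 ,
\]
so setting $\tilde s_2=s_2+\tfrac{\beta}{2\gamma}s_1$ and $\tilde x_2=\tfrac1\gamma\bigl(x_2+c\,x_1\bigr)$ yields $D(\tilde x_2)=\tilde s_2^2$. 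Both $\alpha\neq0$ and $\gamma\neq0$ are exactly what make $c$, the rescalings, and the completion of the square legitimate over $\Q$, and the transformation $(s_1,s_2)\mapsto(\tilde s_1,\tilde s_2)$ is invertible, so $\tilde s_1,\tilde s_2$ is again a basis in degree two.

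In the second case the pencil already contains two rational perfect squares: since $D(x_2)\pm 2D(x_1)=s_1^2\pm2s_1s_2+s_2^2=(s_1\pm s_2)^2$, I would set $\tilde s_1=s_1+s_2$, $\tilde s_2=s_1-s_2$, $\tilde x_1=x_2+2x_1$ and $\tilde x_2=x_2-2x_1$, giving $D(\tilde x_i)=\tilde s_i^2$ directly. In both cases one finally checks the routine points: the change of generators is invertible (so $(\Q[\tilde s_1,\tilde s_2]\ox\wedge(\tilde x_1,\tilde x_2,x_3,\dots,x_N),D)$ is the same algebra with the same differential, merely re-expressed, and hence still a minimal model), the generators $x_3,\dots,x_N$ are untouched, and $D$ remains decomposable. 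The only genuine subtlety --- and the point I expect to require the most care --- is the rationality: for a general pencil the two perfect squares in it form a conjugate pair of points on the discriminant conic $\{B^2-4AC=0\}$ and need not be defined over $\Q$, so the proof really relies on the special shape of the two hypothesised normal forms (and on the nonvanishing of $\alpha,\gamma$) to guarantee that the two squares are rational and linearly independent.
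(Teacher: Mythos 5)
Your proof is correct and follows essentially the same route as the paper: an explicit rational change of basis that completes the square in the pencil of quadratic forms. The only (cosmetic) difference is that your choice $\tilde s_1=s_1$, $\tilde s_2=s_2+\tfrac{\beta}{2\gamma}s_1$ handles $\beta=0$ uniformly, whereas the paper rescales $s_1$ by $\tfrac{\beta}{2\gamma}$ and therefore must dispose of the case $\beta=0$ separately first.
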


% PROOF

\begin{proof}
In the first case, if $\beta = 0$ the statement is trivially true by rescaling $x_1$ and $x_2$.  Suppose $\beta \neq 0$.  The desired change of basis is then given by 
\[
\tilde s_1 = \frac{\beta}{2\gamma} s_1, \ \ \tilde s_2 = \tilde s_1 + s_2, \ \ \tilde x_1 = \frac{\beta^2}{4\alpha \gamma^2} x_1 \textrm{ and } \tilde x_2 = \tilde x_1 + \frac{1}{\gamma} x_2.
\]
In the second case, the appropriate change is given by
\[
\tilde s_1 = s_1 - s_2, \ \ \tilde s_2 =  s_1 + s_2, \ \ \tilde x_1 = x_2 - 2 x_1 \textrm{ and } \tilde x_2 = x_2 + 2 x_1.
\]

\end{proof}

% PROOF OF THEOREM ON BIQUOTIENTS

\begin{proof}[Proof of Theorem \ref{T:biqs}]
Following the discussion before the statement of the theorem, every free, linear $T^2$ action on $\prod_{i=1}^N \sph^3$ is equivariantly diffeomorphic to one of the form \eqref{Eq:action}.  As a consequence, only such actions need be considered.  Moreover, every such action is, in fact, a biquotient action.  That is, there is a homomorphism $f : T^2 \to \prod \sph^3 \x \prod \sph^3$ yielding a free two-sided action of $T^2$ on the Lie group $\prod \sph^3$.  On the $i^{\rm th}$ factor this action is given by
\[
(z,w) \cdot q_i = z^{a_i} w^{k_i} u_i + z^{b_i} w^{l_i} v_i j = \left(z^{\tfrac{a_i + b_i}{2}} w^{\tfrac{k_i + l_i}{2}}\right) q_i \left(\bar z^{\tfrac{b_i - a_i}{2}} \bar w^{\tfrac{l_i - k_i}{2}}\right).
\]
Since the parity of $a_i \pm b_i$ (resp.\ $k_i \pm l_i$) does not depend on the choice of sign, the action is well defined.

Recall that a Lie group $L$ has the rational homotopy type of a product $\sph^{2m_1 - 1} \x  \dots \x \sph^{2m_{r} - 1}$ of odd-dimensional spheres, with $r = \rank(L)$, and its minimal model is hence given by $(H^*(L; \Q), d) = (\wedge (x_1, \dots, x_{r}), 0)$, where $\deg(x_i) = 2m_i - 1$, for $i = 1, \dots, r$.  It is then easy to see that the classifying space $BL$ has minimal model $(H^*(BL;\Q), \bar d) = (\Q[\bar x_1, \dots, \bar x_{r}], \bar d)$, where  the $\bar x_i$ are the transgressions of the $x_i$ in the Serre spectral sequence for the universal bundle $L \to EL \to BL$ and satisfy $\deg(\bar x_i) = 2m_i$ and $\bar d(\bar x_i) = 0$ for all $i = 1, \dots, r$.  Then the minimal model of a biquotient $G \bq H$, computed in \cite{Ka}, is given by 
\[
(H^*(BH; \Q) \ox H^*(G; \Q), D) = (H^*(BH; \Q) \ox \wedge (x_1, \dots, x_{r_G}), D),
\]
with the differential $D$ determined by
\[
D|_{H^*(BH, \Q)} \equiv 0 \ \textrm{ and }\ D(x_i) = (B_f)^*(\bar x_i \ox 1) - (B_f)^*(1 \ox \bar x_i),
\]
where $(B_f)^* : H^*(BG; \Q) \ox H^*(BG;\Q) \to H^*(BH; \Q)$ is the map induced by the (injective) homomorphism $f:H \to G \x G$ which describes the free action of $H$ on $G$.  In order to compute the map $(B_f)^*$, one need only follow the procedure as laid out in \cite{Es} (for further explicit examples, see \cite{FZ}, \cite{Ke}, \cite{DV}).

In the present situation, $G = \prod_{i=1}^N \sph^3$ and $H = T^2$, hence $H^*(G; \Q) = \wedge(x_1, \dots, x_N)$, with $\deg(x_i) = 3$ for all $i = 1, \dots, N$, and $H^*(BH; \Q) = \Q[s_1, s_2]$, with $\deg(s_1) = \deg(s_2) =  2$.  Moreover, the map $(B_f)^*$ is determined by
\begin{align*}
(B_f)^*(\bar x_i \ox 1) &= \frac{1}{4}\left((a_i + b_i) s_1 + (k_i + l_i) s_2  \right)^2 \ \textrm{ and } \\
(B_f)^*(1 \ox \bar x_i) &= \frac{1}{4}\left((b_i - a_i) s_1 + (l_i - k_i) s_2\right)^2.
\end{align*}
It now follows easily that the minimal model for $(\prod_{i=1}^N \sph^3) \bq T^2$ is given by
\[
(\Q[s_1, s_2] \ox \wedge(x_1, \dots, x_N), D)
\]
where $D(s_1) = D(s_2) = 0$ and 
\begin{align*}
D(x_i) &= (a_i s_1 + k_i s_2)(b_i s_1 + l_i s_2) \\ 
&= a_i b_i s_1^2 + (a_i l_i + b_i k_i) s_1 s_2 + k_i l_i s_2^2
\end{align*}
for all $i = 1, \dots, N$.

By Lemma \ref{L:reduced}, it may be assumed without loss of generality that $a_1 \neq 0$, $k_1 = 0$, $(b_1, l_1) \neq (0,0)$ and $k_2 l_2 \neq 0$. By rescaling the $x_i$ appropriately, it can be further assumed that $a_1=1$ and $\gcd(b_1,l_1)=1$. Under these assumptions the matrix associated to the map
\[
D_3:  \Span_\Q \{ x_1, \dots, x_N \} = \Q^N  \to \Q^3 = \Span_\Q \{ s_1^2, s_1s_2, s_2^2 \} = H^4(BH;\Q)
\]
is the one that appears in Lemma \ref{L:same}, and, in particular, its image has dimension at least $2$.

If $D_3$ has a three-dimensional image, then there is a unique minimal model and hence a unique rational homotopy type, since there is always some basis $\{y_1, \dots, y_N\}$ for $H^3 (G;\Q) = \Q^N$, with $N \geq 3$, such that
\begin{align*}
D_3(y_1) &= s_1^2, \\
D_3(y_2) &= s_1 s_2, \\
D_3(y_3) &= s_2^2, \\
D_3(y_j) &= 0, \ \textrm{ for all } j = 4, \dots, N.
\end{align*}
An action achieving this model is given by setting $a_1 = b_1 = 1$, $k_1 = l_1 = 0$, $a_2 = b_2 = 0$, $k_2 = l_2 = 1$, $a_3 = l_3 = 2$, $b_3 = k_3 = 0$ and $a_i = b_i = k_i = l_i = 0$, for all $i = 4, \dots, N$.  The corresponding biquotient $(\prod_{i=1}^N \sph^3) \bq T^2$ is the product $T^1 (\sph^2 \x \sph^2) \x \prod_{i = 1}^{N-3} \sph^3$.  Indeed, $T^1 (\sph^2 \x \sph^2)$ is given as the quotient $(\sph^3 \x \sph^3 \x \sph^3) \bq T^2$, where $T^2$ acts via
\[
(z,w) \cdot \bpm q_1 \\ q_2 \\ q_3 \epm = 
\bpm 
z q_1 \\ 
w q_2 \\ 
z^2 u_3 +  w^2 v_3 j 
\epm , 
\]
where $q_3 = u_3 + v_3 j  \in \sph^3 \subset \HH$ as usual.  One sees this as follows: The projection onto the first two $\sph^3$ factors shows that this is an $\sph^3$-bundle over $\sph^2 \x \sph^2$.  The associated vector bundle $E$ is the quotient of $\sph^3 \x \sph^3 \x \HH$ by the $T^2$ action described above and it suffices to show that $E$ is the tangent bundle of $\sph^2 \x \sph^2$.  By considering the $z$- and $w$-circle actions separately, it is clear, however, that $E = (\sph^3 \x \C)/\sph^1 \x (\sph^3 \x \C)/\sph^1$, where the Euler class shows that each factor is $T \sph^2$.

\medskip
It remains to consider the case where $D_3$ has a two-dimensional image. Given $a_1 = 1$ and $\gcd(b_1, l_1) = 1$, consider the system of equations
\beq
\label{Eq:system}
\begin{aligned}
D_3(x_1) &= b_1 s_1^2 + l_1 s_1 s_2,\\
D_3(x_i) &= a_j b_j s_1^2 + (a_j l_j + b_j k_j) s_1 s_2 + k_j l_j s_2^2,\ \textrm{ for all } j = 2, \dots, N.
\end{aligned}
\eeq
If $l_1 = 0$, it follows that $b_1 = \pm 1$.  By subtracting an appropriate multiple of $x_1$ from $x_2$ and, by an abuse of notation, relabelling the result $x_2$, one achieves a differential as in the hypothesis of Lemma \ref{L:easy}.  After applying the lemma, it may be assumed without loss of generality that $D_3(x_1) = s_1^2$ and $D_3(x_2) = s_2^2$.  Since all other terms in the image of $D_3$ are linear combinations of $D_3(x_1)$ and $D_3(x_2)$, an appropriate change of basis yields, again abusing notation, $D_3(x_1) = s_1^2$, $D_3(x_2) = s_2^2$, and $D_3(x_j) = 0$ for all $j = 3, \dots, N$.  The resulting minimal model is that of $(\sph^2 \x \sph^2) \x \prod_{i = 1}^{N-2} \sph^3$.

Suppose now that $l_1 \neq 0$. Set 
$\tilde s_2 = b_1 s_1 + l_1 s_2$, hence $s_2 = \tfrac{1}{ l_1}(\tilde s_2 -  b_1 s_1)$.  Therefore
\begin{align*}
D_3(x_1)
&=  s_1 \tilde s_2,\\
D_3(x_j) &= \left(a_j s_1 + \frac{k_j}{l_1}(\tilde s_2 - b_1 s_1) \right)
		\left(b_j s_1 + \frac{l_j}{l_1}(\tilde s_2 - b_1 s_1) \right) \\
&= l_1^2 \left(- \bvm b_1 & a_j \\ l_1 & k_j \evm s_1 + k_j \tilde s_2 \right)
			\left( - \bvm b_1 & b_j \\ l_1 & l_j \evm s_1 + l_j \tilde s_2 \right),
\end{align*}
for all $j = 2, \dots, N$.  Finally, if $\tilde x_j$, $j = 2, \dots, N$, is defined by
\[
\tilde x_j = \frac{1}{l_1^2} x_j +   
\left(l_j \bvm b_1 & a_j \\ l_1 & k_j \evm + k_j 
		\bvm b_1 & b_j \\ l_1 & l_j \evm \right) x_1
\] 
then, using the linearity of the determinant function in the first column, the system of equations reduces to
\begin{align*}
D_3(x_1) 
&= s_1 \tilde s_2,\\
D_3(\tilde x_j) &= \bvm b_1 & a_j \\ l_1 & k_j \evm  
\bvm b_1 & b_j \\ l_1 & l_j \evm s_1^2 + k_j l_j \tilde s_2^2,
\end{align*}
for all $j = 2, \dots, N$.

By Lemma~\ref{L:same}, it follows that there is some $\varepsilon\in \{\pm 1\}$ such that
\[
\bvm b_1 & a_j \\ l_1 & k_j \evm  
\bvm b_1 & b_j \\ l_1 & l_j \evm = \varepsilon k_j l_j, \text{ for all } j = 2, \dots, N. 
\]
As $k_2 l_2 \neq 0$ and the image of $D_3$ is two dimensional, let $\tilde x_2'$ be the appropriate rescaling of $\tilde x_2$, and $\tilde x_j'$ be the relevant linear combinations of $x_1$ and $\tilde x_2'$, such that the differential $D$ can be written as
\begin{align*}
D(x_1) 
&= s_1 \tilde s_2,\\
D(\tilde x_2') &= s_1^2 \pm \tilde s_2^2,\\
D(\tilde x_j') &= 0, \ \textrm{ for all } j = 3, \dots, N. 
\end{align*}

Lemma~\ref{L:easy} shows that, when $D(\tilde x_2') = s_1^2 + \tilde s_2^2$, the resulting minimal model is that of $(\sph^2 \x \sph^2) \x \prod_{i = 1}^{N-2} \sph^3$.  On the other hand, whenever $D(\tilde x_2') = s_1^2 - \tilde s_2^2$, the minimal model corresponds to that of $(\CP^2 \# \CP^2) \x \prod_{i = 1}^{N-2} \sph^3$.
 \end{proof}

% SECTION: DIFFEO CLASSIFICATION
%-----------------------------------------------------
%-----------------------------------------------------

\section{Partial classification in low dimensions}
\label{S:DIFFEO_CLASS}

In low dimensions, the classification in Theorem \ref{T:RIGIDITY} can be significantly strengthened.  If $M^3$ is a smooth, closed, simply connected, rationally elliptic manifold of dimension three, then, by the Poincar\'e Conjecture, $M^3$ is diffeomorphic to $\sph^3$ and admits a unique free $\sph^1$ action, the so-called Hopf action, and infinitely many almost-free $\sph^1$ actions (see, for example, \cite{Or}).  Moreover, as there is a unique effective $T^2$ action on $\sph^3$ (see \cite{Ne}), the classification of effective torus actions up to equivariant diffeomorphism is complete.

A classification up to homeomorphism of closed, (simply connected) rationally elliptic $4$-manifolds can be found in \cite{PP}, with the complete list consisting of the spaces $\sph^4$, $\CP^2$, $\sph^2 \x \sph^2$ and $\CP^2 \# \pm \CP^2$.  This can be improved to (equivariant) diffeomorphism in the presence of a smooth circle action by employing a result of Fintushel \cite[Theorem 13.2]{Fin} combined with the Poincar\'e Conjecture.  By Proposition \ref{P:ELLIPTIC_INEQ}, none of these $4$-manifolds can admit an almost-free $\sph^1$ action.  On the other hand, since a maximal effective torus action is of rank two (i.e.\ of cohomogeneity two), the classification of such actions up to equivariant diffeomorphism follows from the results in \cite{GGK} and \cite{GW}.

Closed, simply connected manifolds of dimension five have been classified up to diffeomorphism by Barden \cite{Ba}.  If a closed, simply connected manifold $M^5$ is assumed to be rationally elliptic, then Proposition \ref{P:ELLIPTIC_INEQ} can be used to determine the rational homotopy groups and, hence, the minimal model and rational cohomology ring for $M^5$.  It follows that $M^5$ is either a rational homology $5$-sphere or has Betti numbers $b_2(M^5) = b_3(M^5) = 1$.  From Barden's classification, it is clear that there are infinitely many possible diffeomorphism types.  If $M^5$ admits, in addition, a free $\sph^1$ action, then the quotient $B^4 = M^5/\sph^1$ is a closed, simply connected, rationally elliptic $4$-manifold with $1 \leq \rank(\pi_2(B^4)) \leq 2$, hence is homeomorphic to one of $\CP^2$, $\sph^2 \x \sph^2$ or $\CP^2 \# \pm \CP^2$.  Since $M^5$ is simply connected, the Gysin sequence and \cite{Ba} together yield that $M^5$ is diffeomorphic to one of $\sph^5$, $\sph^3 \x \sph^2$ or $\sph^3 \tilde\x \sph^2$, the non-trivial $\sph^3$-bundle over $\sph^2$.  If the circle action on $M^5$ is assumed to be only almost free, the classification result of Koll\'ar \cite{Ko} describes which $5$-manifolds arise.  In particular, there can be torsion, albeit strongly restricted, in the cohomology ring.

If the rationally elliptic manifold $M^5$ admits a maximal effective torus action, that is, a torus action of rank three, then a combination of the work of Oh \cite{Oh2} with the classification in \cite{Ba} yields that $M^5$ must again be diffeomorphic to one of $\sph^5$, $\sph^3 \x \sph^2$ or $\sph^3 \tilde\x \sph^2$.  Moreover, the results in \cite{GGK} give a classification of such actions up to equivariant diffeomorphism.

In dimension six, closed, simply connected manifolds have been classified by Wall \cite{Wa}, Jupp \cite{Ju} and Zhubr \cite{Zh}.  In particular, every closed, simply connected $6$-manifold $M^6$ is diffeomorphic to a connected sum of the form $M_0^6 \# M_1^6$, where $H_3(M_0^6; \Z)$ is finite and $M_1^6$ is a connected sum of copies of $\sph^3 \x \sph^3$.  If $M^6$ is rationally elliptic and admits an almost-free $T^2$ action (in fact, an almost-free circle action is sufficient), then one can easily determine from Proposition \ref{P:ELLIPTIC_INEQ} that $M^6$ has Betti numbers $b_2(M^6) = 0$ and $b_3(M^6) = 2$, that is, $M^6 \cong M_0^6 \# (\sph^3 \x \sph^3)$, where $M_0^6$ is a rational homology $6$-sphere.  It is not clear which such $M^6$ admit an almost-free $T^2$ action.  However, if the $T^2$ action on $M^6$ is free, then, being the total space of a principal bundle over a closed, (simply connected) rationally elliptic $4$-manifold with $b_2(M^6/T^2) = 2$, it turns out that $M^6$ is homeomorphic, hence diffeomorphic, to $\sph^3 \x \sph^3$.

On the other hand, the case where $M^6$ admits an effective $T^4$ action is very rigid.  Indeed, it follows from \cite{Oh} that $M^6$ is equivariantly diffeomorphic to $\sph^3 \x \sph^3$ equipped with its unique smooth, effective $T^4$ action.

In dimensions $7$ to $9$, it is also possible to obtain a classification in some special cases, although a general classification seems out of reach at present.  Nevertheless, Theorem \ref{T:7to9} below provides further evidence for the conjecture in the introduction.  First, using the notation established in Section \ref{S:PROOF_THM_A}, recall that the proofs of Theorem \ref{T:RANK_BOUND} and Theorem \ref{T:RIGIDITY} yield $s = n - k$ whenever $k = \left\lfloor \frac{2n}{3} \right\rfloor$.  Thus $M^n$ admits an almost-free action by a subtorus of rank $k - s = 2k-n$.

% THM

\begin{thm}
\label{T:7to9}
Let $M^n$ be a smooth, closed,  (simply connected) rationally elliptic $n$-dimensional manifold, $7 \leq n \leq 9$, equipped with a smooth, effective action of the torus $T^k$ of rank $k = \left\lfloor \frac{2n}{3} \right\rfloor$.  Suppose further that $H_2(M^n; \Z)$ is torsion free and that $T^k$ contains a subtorus of rank $2k-n$ which acts freely on $M^n$.  Then the action of $T^k$ on $M^n$ is equivariantly homeomorphic to the unique (induced) effective, linear action of $T^k$ on a manifold of one of the following forms:
\begin{align*}
n = 7 : & \begin{cases}
\sph^7 \text{ or } \ \sph^4 \x \sph^3, & \text{if } b_2(M^7) = 0; \\
(\sph^3 \x \sph^5)/\sph^1, & \text{if } b_2(M^7) = 1; \\
(\sph^3 \x \sph^3 \x \sph^3)/ T^2, & \text{if } b_2(M^7) = 2.
\end{cases} \\
n = 8 : & \begin{cases}
\sph^3 \x \sph^5, & \text{if } b_2(M^8) = 0; \\
(\sph^3 \x \sph^3 \x \sph^3)/ \sph^1, & \text{if } b_2(M^8) = 1.
\end{cases} \\
n = 9 : & \quad \sph^3 \x \sph^3 \x \sph^3.
\end{align*}
\end{thm}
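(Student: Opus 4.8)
The plan is to use the free subtorus to present $M^n$ as a principal torus bundle over a simply connected, rationally elliptic $6$-manifold, to pin down this base via the classification of simply connected $6$-manifolds, and then to recover $M^n$ from the (tightly controlled) Euler classes of the bundle. First I would apply Proposition~\ref{P:ELLIPTIC_INEQ}, exactly as in the proofs of Lemmas~\ref{L:d_even} and~\ref{L:RHG}, to read off the rational homotopy groups of $M^n$; by Theorem~\ref{T:RIGIDITY} the single invariant $b_2(M^n)$ then selects the rational homotopy type and hence the appropriate row of the table in each dimension. Write $r = 2k-n$, so that $r \in \{1,2,3\}$ for $n \in \{7,8,9\}$, and note that $s = n-k = 3$ throughout. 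By hypothesis $T^r \subseteq T^k$ acts freely, so $T^r \to M^n \to B$ is a principal $T^r$-bundle over $B := M^n/T^r$, a smooth, closed, simply connected (by the homotopy sequence of the bundle), rationally elliptic manifold of dimension $2n-2k = 6$, carrying the residual effective $T^s = T^3$-action.

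The homotopy sequence of the bundle gives $d_2(B) = d_2(M^n) + r$ and $d_j(B) = d_j(M^n)$ for $j \geq 3$, whence $\chi_\pi(B) = \chi_\pi(M^n) + r = 0$, since the rational homotopy groups found above satisfy $-\chi_\pi(M^n) = r$ in each case. By \cite[Prop.~32.10]{FHT}, $B$ is positively elliptic, so $H^{\mathrm{odd}}(B;\Q) = 0$ and $\chi(B) > 0$; in particular the $T^3$-action, being of half the dimension of $B$, has nonempty fixed-point set. Using the torsion-free hypothesis on $H_2(M^n;\Z)$ together with the Gysin/Serre spectral sequence of the bundle, one verifies that $H^*(B;\Z)$ is torsion free and concentrated in even degrees. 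Thus $B$ is a simply connected, $6$-dimensional torus manifold with torsion-free even cohomology.

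Next I would identify $B$ up to equivariant diffeomorphism. Feeding these constraints into the classification of simply connected $6$-manifolds of Wall, Jupp and Zhubr \cite{Wa, Ju, Zh}---the cohomology ring, $w_2$ and the cubic form being forced into standard form by the $T^3$-action and by $b_2(B) = b_2(M^n) + r$---pins $B$ down as one of $\CP^3$, $\sph^2 \x \sph^4$, $\sph^2 \x \CP^2$, $(\sph^2)^3$, or (for $n=7$, $b_2=2$) an $\sph^2$-bundle over $\sph^2\x\sph^2$, each with its standard $T^3$-action. I would then reconstruct $M^n$: principal $T^r$-bundles over $B$ are classified by their Euler classes in $H^2(B;\Z)^{\oplus r}$, and the requirements that $M^n$ be simply connected with the prescribed $b_2(M^n) = b_2(B) - r$ force these classes to form a basis of a primitive rank-$r$ summand of $H^2(B;\Z)$. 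Modulo the automorphisms of $H^2(B;\Z)$ realised by equivariant diffeomorphisms of $B$, this determines the bundle, and a direct computation (for instance, the circle bundle over $\CP^3$ with primitive Euler class is $\sph^7$, while over $\sph^2 \x \sph^4$ it is $\sph^3\x\sph^4$) identifies the total space with the stated model. Because $T^r$ acts along the fibres and the residual $T^s$ descends to the standard action on $B$, the whole reconstruction is $T^k$-equivariant, and uniqueness of the linear action on each model---together with the equivariant low-dimensional classifications of \cite{Oh, GGK}---yields the equivariant homeomorphism.

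The principal obstacle is the third step: upgrading $B$ from its rational homotopy type to a definite \emph{equivariant diffeomorphism} type, and then controlling the integral Euler-class data in the reconstruction. This is precisely where both hypotheses are indispensable. Torsion-freeness of $H_2(M^n;\Z)$ is what forces $H^*(B;\Z)$ to be torsion free, so that the integral $6$-manifold classification and the theory of torus manifolds can be brought to bear; the free subtorus is what produces the principal bundle and hence the base in the first place. Dropping either hypothesis allows the extra rational homotopy types of Proposition~\ref{P:AMAF} to intervene, and the Euler-class bookkeeping can no longer be controlled.
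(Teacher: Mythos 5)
Your setup coincides with the paper's: quotient by the free subtorus $T^{2k-n}$, show the base $B^6$ is a smooth, closed, simply connected, rationally elliptic $6$-manifold with an effective $T^3$-action, compute $\chi_\pi(B)=0$, deduce $H^{\mathrm{odd}}(B;\Q)=0$ and $\chi(B)>0$ (hence fixed points), and use the torsion-free hypothesis to get $H^{\mathrm{odd}}(B;\Z)=0$ with torsion-free even cohomology. The gap comes at your third step, where you claim that the Wall--Jupp--Zhubr invariants of $B$ are ``forced into standard form by the $T^3$-action and by $b_2(B)$,'' so that $B$ is one of five specific manifolds. This is false. In the cases $n=7$ with $b_2(M^7)=2$, $n=8$ with $b_2(M^8)=1$, and $n=9$, the base has $b_2(B)=3$ and is a quotient of $\sph^3\x\sph^3\x\sph^3$ by a free linear $T^3$-action; by Totaro's examples \cite{To} (cited in the introduction of this paper) there are \emph{infinitely many} rational homotopy types of such $6$-manifolds, all of them simply connected, rationally elliptic torus manifolds with torsion-free cohomology concentrated in even degrees. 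So the constraints you have assembled do not determine the cubic form, and $B$ is genuinely not pinned down up to (equivariant) diffeomorphism. Since your reconstruction of $M^n$ via Euler classes takes the identification of $B$ as its starting point, the second half of the argument collapses with it.

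The paper's proof avoids determining $B$ altogether. It invokes Wiemeler's structure theorem \cite{Wi} for simply connected torus manifolds with $H^{\mathrm{odd}}(\,\cdot\,;\Z)=0$ to conclude only that $B^6$ is homeomorphic to \emph{some} quotient of a product of spheres $\prod_i\sph^{k_i}$ by a free linear torus action of rank $r=b_2(M^n)+2k-n$; it then uses the uniqueness of the principal $T^r$-bundle over $B$ with $2$-connected total space to lift this statement, concluding that $M^n$ itself is homeomorphic to the quotient of $\prod_i\sph^{k_i}$ by a free linear $T^{b_2(M^n)}$-action. The finite list is then extracted not from a classification of the base but from the already-established rational classification of $M^n$ (Theorem~\ref{T:ThmB2} and Theorem~\ref{T:biqs}), which determines the admissible $k_i$. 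If you want to salvage your approach, you must replace the appeal to Wall--Jupp--Zhubr by an appeal to \cite{Wi} (whose proof is where the Masuda--Panov theory of torus manifolds actually does the work you attributed to ``the $T^3$-action''), and then argue on the level of $M^n$ rather than of $B$.
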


% PROOF

\begin{proof}
First note that, as $7 \leq n \leq 9$ and $k = \left\lfloor \frac{2n}{3} \right\rfloor$, it follows that $n - k = 3$. Now, let $T^{2k-n} \In T^k$ be a subtorus acting freely on $M^n$ and let $B^6 = M^n/T^{2k-n}$ be the corresponding quotient.  In particular, there is an induced effective $T^3 = T^k/T^{2k-n}$ action on $B^6$.  From the long exact homotopy sequence for the principal bundle $T^{2k-n} \to M^n \to B^6$ it follows that $\pi_1(B^6) = 0$ and $\pi_2(B^6) = \pi_2(M^n) \oplus \Z^{2k-n}$.  As $H_2(M^n; \Z)$ is torsion free, one obtains $H_2 (B^6; \Z) = \Z^{b_2(M^n) + 2k - n}$, by applying the Hurewicz Theorem first to $M^n$ and then to $B^6$.  The Universal Coefficient Theorem, together with Poincar\'e Duality, now yields $H^1(B^6; \Z) = H^5(B^6; \Z) = 0$, $H^2(B^6; \Z) = H^4(B^6; \Z) = \Z^{b_2(M^n) + 2k - n}$ and that $H^3(B^6; \Z)$ is torsion free.

Given as before $d_j(X) = \dim(\pi_j (X) \ox \Q)$ for a space $X$, it can easily be seen from the long exact homotopy sequence for $T^{2k-n} \to M^n \to B^6$ that $d_2(B^6) = d_2(M^n) + 2k - n$ and $d_j(B^6) = d_j(M^n)$, for all $j \geq 3$.  In particular, $B^6$ is rationally elliptic and, from the values of $d_j(M^n)$ determined in Lemmas \ref{L:d_even} and \ref{L:RHG}, as well as the proof of Theorem \ref{T:ThmB2}, one obtains 
$$
\chi_\pi (B^6) = \sum_{j = 0}^\infty (-1)^j \, d_j(B^6) = \chi_\pi(M^n) - (2k - n) = 0.
$$
This identity has a number of implications, see \cite[Prop.\ 32.10]{FHT}.  First, $H^{\rm odd}(B^6; \Q) = 0$ and, together with the discussion above, this implies that $H^{\rm odd}(B^6; \Z) = 0$.  Second, the Euler characteristic $\chi (B^6)$ is positive and, hence, the induced effective $T^3$ action on $B^6$ must have fixed points.  Consequently, $B^6$ is a (simply connected) rationally elliptic, torus manifold with $H^{\rm odd}(B^6; \Z) = 0$.

By \cite{Wi}, $B^6$ is therefore homeomorphic to the quotient of a product $\prod_{i=1}^m \sph^{k_i}$, $k_i \geq 3$, by a free, linear action of the torus $T^r$ of rank $r = \#\{i \mid k_i \text{ odd}\}$.  In combination with $\pi_2(B^6) = \Z^{b_2(M^n) + 2k - n}$, the long exact homotopy sequence of the principal bundle $T^r \to \prod_{i=1}^m \sph^{k_i} \to B^6$ now yields that $r =b_2(M^n) + 2k - n$.  As there is a unique principal $T^r$-bundle over $B^6$ with $2$-connected total space, it follows that $M^n$ must be homeomorphic to the quotient of $\prod_{i=1}^m \sph^{k_i}$ by a free, linear $T^{b_2(M^n)}$ action.

Now, in the proof of Theorem \ref{T:RIGIDITY} it was shown that $d_2(M^n) = b_2(M^n) \in \{0, 1, 2\}$,with restrictions depending on $n$, and the possible values of the $k_i$ were determined in each case, as these follow from the possible values of $d_j(M^n)$.  Hence, $M^n$ must be homeomorphic to a manifold of one of the forms listed in the statement of the theorem.

Finally, the equivariance of the homeomorphism follows from \cite{Wi} together with the uniqueness of maximal-rank, linear actions on products of spheres.
\end{proof}

As an interesting and illustrative example, the Lie group $\SU(3)$ is rationally homotopy equivalent to $\sph^3 \x \sph^5$, but $\pi_4$ shows that they are not even homotopy equivalent, never mind homeomorphic.  Given that there exist (at least two, see \cite{Es}) free torus actions on $\SU(3)$ of rank $\left\lfloor \frac{8}{3} \right\rfloor = 2$, Theorem \ref{T:7to9} states that such an action cannot be extended to a smooth, effective torus action of rank $\left\lfloor \frac{16}{3} \right\rfloor = 5$, even though there are extensions to $T^4$ actions. It is expected that $\SU(3)$ does not admit any smooth, effective $T^5$ actions whatsoever.

% REMARK

\begin{rem}
(a) There are several articles dealing with the classification up to diffeomorphism of the manifolds which appear in the conclusion of Theorem  \ref{T:7to9}.  See, for example, \cite{DV2, CE, Kr}.

\noindent 
(b) The difficulty in extending Theorem \ref{T:7to9} to higher dimensions lies in establishing that $H^*(B^{2(n-k)}; \Z)$ has no torsion in odd degrees.  This is essential in order to apply the results in \cite{Wi} in the case that $M^n$ is rationally elliptic.  On the other hand, by assuming  in \cite{ES} that $M^n$ possesses instead an invariant metric of non-negative curvature, the authors avoid this issue entirely.  In general, it is unclear how to proceed if the $T^{2k-n}$ action on $M^n$ is only almost free.
\end{rem}

%--------------------------------------------------------------------------
% 				BIBLIOGRAPHY
%--------------------------------------------------------------------------

\bibliographystyle{amsplain}

% ----------------------------------------------------------------
% END BIBLIOGRAPHY-------------------------------------------------
% ----------------------------------------------------------------

\end{document}